\newtheorem{theorem}{Theorem}[section]
\newtheorem{lemma}[theorem]{Lemma}
\newtheorem{proposition}[theorem]{Proposition}
\newtheorem{corollary}[theorem]{Corollary}
\theoremstyle{definition}
\newtheorem{remark}[theorem]{Remark}
\newcommand{\torus}{\mathbb{T}^2}
\newcommand{\klein}{\mathbb{K}^2}
\newcommand{\z}{\mathbb{Z}}
\newcommand{\zsz}{\mathbb{Z} \oplus \mathbb{Z}}
\newcommand{\zsdz}{\mathbb{Z} \rtimes \mathbb{Z}}
\newcommand{\ztwo}{\mathbb{Z}_2}
\newcommand{\rtwo}{\mathbb{R}^2}
\newcommand{\id}{\boldsymbol{1}}
\renewcommand{\hom}{{\rm Hom}}
\newcommand{\closure}{\overline{ \left\langle \sigma^2 \right\rangle}}
\newcommand{\gsigma}{\overline{\left\langle \sigma^2 \right\rangle}}
\newcommand{\gsigmab}{\overline{\left\langle \sigma^2 \right\rangle}_{\text{Ab}}}
\newcommand{\ab}{{\text{Ab}}}
\renewcommand{\ker}[1]{\ensuremath{\operatorname{\text{Ker}}\left({#1}\right)}}
\renewcommand{\p@enumii}{}
\begin{document}

\title{The Borsuk-Ulam property for homotopy classes of maps between the torus and the Klein bottle}

\author{DACIBERG LIMA GON\c{C}ALVES
~\footnote{Departamento de Matem\'atica, IME, Universidade de S\~ao Paulo, Caixa Postal 66281, Ag.\ Cidade de S\~ao Paulo, CEP: 05314-970, S\~ao Paulo, SP, Brazil. 
e-mail: \texttt{dlgoncal@ime.usp.br}}
\and
JOHN GUASCHI
~\footnote{Normandie Univ., UNICAEN, CNRS, Laboratoire de Mathématiques Nicolas Oresme UMR CNRS~\textup{6139}, 14000 Caen, France.
e-mail: \texttt{john.guaschi@unicaen.fr}}
\and
VINICIUS CASTELUBER LAASS
~\footnote{Departamento de Matemática, IME, Universidade Federal da Bahia, Av.\ Adhemar de Barros, S/N Ondina CEP: 40170-110, Salvador, BA, Brazil. 
e-mail: \texttt{vinicius.laass@ufba.br}} 
}

\date{17th November 2019}

\maketitle

\begin{abstract}%
Let $M$ be a topological space that admits a free involution $\tau$, and let $N$ be a topological space. A homotopy class $\beta \in [ M,N ]$ is said to have {\it the Borsuk-Ulam property with respect to $\tau$} if for every representative map $f: M \to N$ of $\beta$, there exists a point $x \in M$ such that $f(\tau(x))= f(x)$. In this paper, we determine the homotopy classes of maps from the $2$-torus $\torus$ to the Klein bottle $\klein$ that possess the Borsuk-Ulam property  with respect to a free involution $\tau_1$ of $\torus$ for which the orbit space is $\torus$. Our results are given in terms of a certain family of homomorphisms involving the fundamental groups of $\torus$ and $\klein$.
\end{abstract}


\section{Introduction}\label{sec:introduction}

In the early twentieth century, St.~Ulam conjectured that if $ f : \mathbb{S}^n \to \mathbb{R}^n$ is a continuous map, there exists $ x \in \mathbb{S}^n$ such that $f(A (x))=f (x)$, where $ A: \mathbb{S}^n \to \mathbb{S}^n$ is the antipodal map. The confirmation of this result by K.~Borsuk in 1933~\cite{Borsuk}, known as the Borsuk-Ulam theorem, was the beginning of what it now referred to as {\it Borsuk-Ulam type theorems} or the {\it Borsuk-Ulam property}. More information about the history and some applications of the Borsuk-Ulam theorem may be found in~\cite{Mato}, for example.

One possible generalisation of the classical Borsuk-Ulam theorem is to subs\-titute $\mathbb{S}^n$ and $\mathbb{R}^n$ by other spaces, and to replace the antipodal map by a free involution. A natural question is the following: does every continuous map collapse an orbit of the involution? More precisely, given topological spaces $M$ and $N$ such that $M$ admits a free involution $\tau$, we say that the triple $(M,\tau ; N)$ {\it has the Borsuk-Ulam property} if for every continuous map $f: M \to N$, there exists a point $x \in M$ such that $f(\tau(x))=f(x)$. For the cases where $M$ is a compact surface without boundary admitting a free involution $\tau$ and $N$ is either $\mathbb{R}^2$ or a compact surface without boundary, the triples $(M,\tau ; N)$ that have the Borsuk-Ulam property have been classified (see~\cite{Gon} and~\cite{GonGua}). One generalisation of this property is to consider a local Borsuk-Ulam problem in the sense of the following definition: a homotopy class $\beta \in [M,N]$ {\it has the Borsuk-Ulam property with respect to $\tau$} if for every representative $f : M \to N$ of $\beta$, there exists a point $x \in M$ such that $f(\tau(x))=f(x)$.

In~\cite{GonGuaLaa}, the Borsuk-Ulam problem for homotopy classes of maps between compact surfaces without boundary was studied, and the sets $ [\torus, \torus]$ and $[\klein, \klein]$ whose elements possess the Borsuk-Ulam property were characterised. By~\cite[Theorem~12]{GonGua}, for any involution $\tau: \torus \to \torus$, the triple $(\torus,\tau ; \klein)$ does not have the Borsuk-Ulam property. Using this information, in this paper we classify the homotopy classes of maps from $\torus$ to $\klein$ that have the Borsuk-Ulam property for the orientation-preserving free involution  $\tau_{1}$ of $\torus$. Our approach, which we now describe, is similar to that used in~\cite{GonGuaLaa}. First, as in~\cite[Theorems~12 and~19]{GonGuaLaa}, we identify $\pi_1(\torus,* )$ and $\pi_1(\klein,*)$ with the free Abelian group $\zsz$ and the (non-trivial) semi-direct product~$\zsdz$ respectively. These identifications will be helpful in formulating the results and in making explicit computations.

To prove our results, we will make use of the following algebraic description given in~\cite[Corollary~2.1]{GonKel} of the set $[\torus,\klein]$ in terms of pointed homotopy classes and the corresponding fundamental groups.

\begin{proposition}\label{prop:set_homotopy}
The set $[\torus,\klein]$ is in bijection with the subset of \linebreak $\hom(\zsz,\zsdz)$ whose elements are described as follows:	
\begin{multicols}{2}

Type 1: ${\allowdisplaybreaks
\begin{cases}
 (1,0) \mapsto (i,2s_1+1)\\
 (0,1) \mapsto (0,2 s_2 )
\end{cases}}$

Type 2: ${\allowdisplaybreaks
\begin{cases}
 (1,0) \mapsto (i,2s_1+1)\\
 (0,1) \mapsto (i,2 s_2+1)
\end{cases}}$

Type 3: ${\allowdisplaybreaks
\begin{cases}
 (1,0) \mapsto (0,2s_1)\\
 (0,1) \mapsto (i,2 s_2 +1)
\end{cases}}$

Type 4: ${\allowdisplaybreaks
\begin{cases}
 (1,0) \mapsto (r_1,2s_1)\\
 (0,1) \mapsto (r_2,2 s_2 ),
\end{cases}}$
\end{multicols}
\noindent where $i \in \{0,1 \}$ and $s_1, s_2 \in \z$ for Types~1,2 and~3, and $r_1, r_2, s_1, s_2 \in \z$ and $r_1 \geq 0$ for Type~4.
\end{proposition}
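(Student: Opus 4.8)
The plan is to use the fact that both $\torus$ and $\klein$ are aspherical, being quotients of $\rtwo$ by free, properly discontinuous actions of $\zsz$ and $\zsdz$ respectively. For an aspherical target the induced-homomorphism map $[f]\mapsto f_{\#}$ is a bijection from the set $[\torus,\klein]_{*}$ of based homotopy classes onto $\hom(\zsz,\zsdz)$, and forgetting basepoints identifies two based maps precisely when their induced homomorphisms differ by the change-of-basepoint action in the target, which on $\hom(\zsz,\zsdz)$ is conjugation by an element of $\zsdz$. Thus the first step is to identify $[\torus,\klein]$ with the quotient of $\hom(\zsz,\zsdz)$ by this conjugation action, after which the problem divides naturally into (i) describing all homomorphisms, and (ii) selecting exactly one representative from each conjugacy class.

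For step (i), since $\zsz$ is free Abelian on $(1,0)$ and $(0,1)$, a homomorphism $\varphi$ is determined by the images $\varphi(1,0),\varphi(0,1)\in\zsdz$, the only constraint being that these commute. Writing elements of $\zsdz$ as pairs with product $(a_1,b_1)(a_2,b_2)=(a_1+(-1)^{b_1}a_2,\,b_1+b_2)$, the commutation relation collapses to the single equation $a_1(1-(-1)^{b_2})=a_2(1-(-1)^{b_1})$, in which only the parities of $b_1,b_2$ matter. Resolving the four parity cases reproduces exactly the four families of Proposition~\ref{prop:set_homotopy}: when both second coordinates are even the first coordinates are unconstrained (Type~4); when exactly one is odd the generator with even second coordinate is forced to have zero first coordinate (Types~1 and~3); and when both are odd the two first coordinates must agree (Type~2). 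At this point all integer parameters range freely over $\z$.

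For step (ii), I would compute the conjugation action directly: conjugation by $g=(m,n)$ sends $(a,b)$ to $((-1)^{n}a+(1-(-1)^{b})m,\,b)$, applied simultaneously to both generator images. The second coordinates are invariant, which is why $s_1,s_2$ remain unnormalised, and the decisive point is the dichotomy in the factor $1-(-1)^{b}$. When $b$ is odd this factor equals $2$, so the first coordinate may be translated by an arbitrary even integer and flipped in sign; each residue modulo $2$ then collapses to one value, yielding the normalisation $i\in\{0,1\}$ in Types~1, 2 and~3. When $b$ is even the factor vanishes and only a simultaneous sign change of both generators survives, normalising the Type~4 pair $(r_1,r_2)$ to $r_1\geq 0$ (with $r_2\geq 0$ in the boundary case $r_1=0$). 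I expect the main difficulty to lie precisely in this bookkeeping: one must check both that every orbit meets the proposed list and that the listed entries are pairwise non-conjugate, which in Type~4 requires remembering that the sign flip acts on $r_1$ and $r_2$ together. Verifying this completeness and irredundancy identifies the four normalised families as a full set of orbit representatives, which is the claimed bijection.
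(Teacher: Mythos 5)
Your proposal is correct and follows exactly the route the paper itself indicates: the paper does not prove Proposition~1.1 but quotes it from~[GonKel, Corollary~2.1], and Remark~1.2 sketches precisely your mechanism (asphericity of $\klein$, the bijection $[\torus,\klein]_{*}\to\hom(\zsz,\zsdz)$ of Whitehead, and passage to conjugacy classes when basepoints are forgotten). Your computations are also right: the commutation constraint $a_1(1-(-1)^{b_2})=a_2(1-(-1)^{b_1})$ and the conjugation formula $(a,b)\mapsto((-1)^{n}a+(1-(-1)^{b})m,\,b)$ both check out against the product $(a_1,b_1)(a_2,b_2)=(a_1+(-1)^{b_1}a_2,\,b_1+b_2)$, and the four parity cases reproduce the four types.

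One point worth highlighting: your parenthetical normalisation ``$r_2\geq 0$ in the boundary case $r_1=0$'' is not an optional refinement but is actually \emph{needed} for the list to be a transversal, and it is absent from the statement as quoted. Indeed, for a Type~4 pair with $r_1=0$ and $r_2\neq 0$, conjugation by $(0,1)$ fixes $(0,2s_1)$ and sends $(r_2,2s_2)$ to $(-r_2,2s_2)$, so the two homomorphisms given by $(0,2s_1;r_2,2s_2)$ and $(0,2s_1;-r_2,2s_2)$ are conjugate yet both satisfy $r_1\geq 0$; the condition $r_1\geq 0$ alone therefore double-counts these classes. Your proof thus establishes a slightly corrected statement (or, read charitably, makes explicit a normalisation left implicit in the quoted version); this has no bearing on the rest of the paper, since the conditions in Theorem~1.3 are invariant under the simultaneous sign change $(r_1,r_2)\mapsto(-r_1,-r_2)$.
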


\begin{remark}\label{rem:homotopy_pi1}
The bijection of Proposition~\ref{prop:set_homotopy} may be obtained using standard arguments in homotopy theory that are described in detail in~\cite[Chapter V, Corollary 4.4]{White}, and more briefly in~\cite[Theorem~4]{GonGuaLaa}. In our specific case, the bijection is defined as follows: given a homotopy class $\beta \in [ \torus,\klein ]$, there exists a pointed map $f\colon\thinspace (\torus,*)\to(\klein,*)$ that gives rise to a representative of $\beta$ if we omit the basepoints. The induced homomorphism $f_\#\colon\thinspace \pi_1(\torus,*) \to \pi_1(\klein,*)$ is conjugate to exactly one of the elements of $\hom(\zsz,\zsdz)$ described in Proposition~\ref{prop:set_homotopy}, which we denote by $\beta_\#$. Note that $\beta_\#$ does not depend on the choice of $f$.
\end{remark}

In order to solve the Borsuk-Ulam problem for homotopy classes, we now des\-cribe the relevant involution of $\torus$. Consider the following short exact sequence:

\begin{align}
& 1 \to \pi_1(\torus)= \zsz \stackrel{i_1}{\longrightarrow}  \pi_1(\torus)= \zsz \stackrel{\theta_1}{\longrightarrow} \ztwo \to 1\label{eq:homo_tau_1}
\end{align}
where:
\begin{equation*}
	i_1: \begin{cases}
	 (1,0) \longmapsto (2,0)\\
	 (0,1) \longmapsto (0,1)
	\end{cases}
\theta_1: \begin{cases}
	 (1,0) \longmapsto \overline{1}\\
	 (0,1) \longmapsto \overline{0}.
	\end{cases}	
\end{equation*}
By standard results in covering space theory, there exists a double covering $c_1: \torus \to \torus$ whose induced homomorphism on the level of fundamental groups is $i_1$. If $\tau_1: \torus \to \torus$ is the non-trivial deck transformation associated with $c_1$, then $\tau_1$ is a free involution. Further, $\tau_{1}$ lifts to a homeomorphism $\widehat{\tau}_{1}: \rtwo \to \rtwo$, where $\widehat{\tau}_{1}(x,y)=(x+\frac{1}{2},y)$ for all $(x,y)\in \rtwo$. In this paper, we classify the elements of the set $[ \torus,\klein ]$ that possess the Borsuk-Ulam property with respect to $\tau_1$. This is achieved in the following theorem, which is the main result of this paper.

\begin{theorem}\label{th:BORSUK_TAU_1}
Given a non-zero integer $t$, let $e(t)$ be its $2$-adic evaluation. With the notation of Proposition~\ref{prop:set_homotopy}, let $\beta \in [ \torus,\klein ]$ and $\beta_\# \in \hom(\zsz,\zsdz )$. Then $\beta$ has the Borsuk-Ulam property with respect to $\tau_1$ if and only if one of the following conditions is satisfied:
\begin{enumerate}[(a)]
\item $\beta_\#$ is a homomorphism of Type~3.
\item $\beta_\#$ is a homomorphism of Type~4, where $s_1$ is odd and $r_2 \neq 0$, and additio\-nally $e(r_1) > e(r_2)$ if $r_{1}\neq 0$.
\end{enumerate}
\end{theorem}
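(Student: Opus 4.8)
The plan is to convert the geometric Borsuk--Ulam property into a purely algebraic lifting problem, following the strategy of the cited works. The starting point is that $\beta$ fails the Borsuk--Ulam property with respect to $\tau_1$ precisely when some representative $f$ has no $\tau_1$-coincidence, i.e.\ $f(\tau_1(x))\neq f(x)$ for all $x\in\torus$. To exploit this, I would consider the map $\Phi\colon\thinspace \torus\to \klein\times\klein$ given by $\Phi(x)=(f(x),f(\tau_1(x)))$, which is $\ztwo$-equivariant with respect to $\tau_1$ on the source and the coordinate swap on the target, and which avoids the diagonal exactly when $f$ has no $\tau_1$-coincidence. Restricting the target to $\klein\times\klein\setminus\Delta$ and passing to orbit spaces yields a map from $\torus/\tau_1=\torus$ to the unordered configuration space $(\klein\times\klein\setminus\Delta)/\ztwo$, which is a $K(\pi,1)$ whose fundamental group is the $2$-string braid group $B_2(\klein)$. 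Since all the spaces involved are aspherical, the existence of such an $f$ is equivalent to the existence of a homomorphism $\varphi\colon\thinspace \zsz=\pi_1(\torus)\to B_2(\klein)$ with two properties: the composite of $\varphi$ with the canonical surjection $B_2(\klein)\to\ztwo$ equals $\theta_1$; and the restriction of $\varphi$ along $i_1$ (which then lands in $P_2(\klein)$) recovers $\beta_\#$, up to conjugacy, after composing with the first-coordinate projection $p\colon\thinspace P_2(\klein)\to\pi_1(\klein)=\zsdz$.

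With this reduction in hand, I would next make the machinery explicit. This means fixing presentations of $P_2(\klein)$ and $B_2(\klein)$, describing the surjection $B_2(\klein)\to\ztwo$ and the projection $p$ on generators. Because $\theta_1(1,0)=\overline1$ and $\theta_1(0,1)=\overline0$, the first condition forces $\varphi(1,0)$ to have odd permutation type and $\varphi(0,1)\in P_2(\klein)$. The fact that $\zsz$ is abelian imposes the commutation relation $\varphi(1,0)\,\varphi(0,1)=\varphi(0,1)\,\varphi(1,0)$ in $B_2(\klein)$, and since $i_1(1,0)=(2,0)$ and $i_1(0,1)=(0,1)$, the recovery condition reads off $\beta_\#$ from $p\bigl(\varphi(1,0)^2\bigr)$ and $p\bigl(\varphi(0,1)\bigr)$. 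Thus deciding the Borsuk--Ulam property for $\beta$ becomes deciding when this explicit system of equations in $B_2(\klein)$ admits a solution: no solution means $\beta$ has the property, and a solution means it does not.

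The remaining work is a case analysis according to the four types of $\beta_\#$ in Proposition~\ref{prop:set_homotopy}. For Types~1 and~2 I expect to exhibit an explicit solution $\varphi$ in every case, so these classes never have the Borsuk--Ulam property and do not appear in the list. For Type~3 I would show that the system is inconsistent for all parameter values, giving condition~(a). For Type~4 one must carry out the genuine solvability computation: the presence of the square $\varphi(1,0)^2$ together with the nontrivial inversion action inherent in $\zsdz$ generates parity and divisibility constraints, and tracking the $2$-adic valuation through the commutation and projection equations should produce exactly the obstruction ``$s_1$ odd, $r_2\neq0$, and $e(r_1)>e(r_2)$ whenever $r_1\neq0$'' as the condition under which no $\varphi$ exists, yielding condition~(b).

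The reduction step and the braid-group presentations are standard, so I expect the main obstacle to be the Type~4 analysis: solving the coupled commutation-plus-projection equations in the nonabelian group $B_2(\klein)$ and extracting the precise $2$-adic inequality. The delicate interaction is between the squaring of the odd generator $\varphi(1,0)$ and the inversion action carried by $\zsdz$, which is what converts a seemingly linear problem into one governed by powers of~$2$; verifying both implications---that the stated conditions force inconsistency and that their failure always permits a solution---will be the crux of the argument, and the case $r_1=0$ will likely need to be separated from $r_1\neq0$ exactly as the statement suggests.
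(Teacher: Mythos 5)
Your opening reduction is essentially the paper's own. The criterion you derive --- a homomorphism $\varphi\colon\thinspace \zsz \to B_2(\klein)$ with $\pi\circ\varphi=\theta_1$ whose restriction along $i_1$ recovers $\beta_\#$ after projecting by $(p_1)_\#$ --- is Lemma~\ref{lem:algebra_tau_1} in disguise: writing $\varphi(1,0)=a\sigma$ and $\varphi(0,1)=b$ with $a,b\in P_2(\klein)$, commutativity of $\zsz$ becomes $a\,l_\sigma(b)=ba$, and since $i_1(1,0)=(2,0)$, $i_1(0,1)=(0,1)$ and $(p_1)_\#(\sigma^2)=(0,0)$, the recovery conditions become $(p_1)_\#(a\,l_\sigma(a))=\alpha_\#(1,0)$ and $(p_1)_\#(b)=\alpha_\#(0,1)$. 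That part is sound (modulo the pointed-versus-free homotopy issue, which the paper settles by quoting earlier work). But from that point on your text is a plan rather than a proof: every substantive claim is flagged with ``I expect'', ``should produce'', ``will be the crux''. Those claims are exactly the content of the theorem, and neither half of them is routine.

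Concretely, two ingredients are missing. On the solvability side, one needs explicit braids, and the paper first shrinks the parameter space --- a homeomorphism of $\klein$ exchanging the cases $i=0$ and $i=1$ (Proposition~\ref{prop:reduced_cases}), and the centre $\langle(\id;0,2)\rangle$ of $B_2(\klein)$, which reduces the $s_i$ modulo $2$ and $4$ (Corollary~\ref{cor:reduction_tau_1}) --- before exhibiting solutions; for Type~4 with $r_1>0$ and $e(r_1)\leq e(r_2)$ it takes $c=(u^{2^{e(r_1)}}v^2;0,0)$, $a=(c\sigma)^{o(r_1)}\sigma^{-1}$, $b=(c\sigma)^{2m}$ with $m=r_2/2^{e(r_1)}$, and the hypothesis $e(r_1)\leq e(r_2)$ is precisely the integrality of $m$, so the $2$-adic threshold already appears on the existence side, not only as an obstruction. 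On the non-existence side, ``tracking the $2$-adic valuation through the commutation and projection equations'' is not an argument in a nonabelian group, and no fixed character of $B_2(\klein)$ will do. The paper's mechanism is to prove that the normal closure $\gsigma$ of $\sigma^2$ in $P_2(\klein)$ is free of countably infinite rank with basis $\{B_{k,l}\}_{k,l\in\z}$ (Theorem~\ref{th:basis_B}, via Reidemeister--Schreier), to put the commutation equation into the normal form of Proposition~\ref{prop:normal_form} using the explicit words $T_{k,r}$, $I_k$, $O_{k,l}$, $J_{k,l}$ of Proposition~\ref{prop:words}, to pass to the Abelianisation $\gsigmab$ where the equation becomes $\z$-linear in the $B_{k,l}$, and then to apply a $\ztwo$-valued functional: for Type~3 the map $\xi(B_{k,l})=\overline{1}$ for all $k,l$, and for Type~4 the functional $\xi_{n_1,r_2}$ of~(\ref{eq:defxi}), supported on $k=n_1-1$ and $2^{e(r_2)+1}\mid l$. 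Note that this functional depends on the data $(m_1,n_1)$ of the putative solution, so the obstruction is a \emph{family} of functionals on $\gsigmab$, not an invariant of the group --- an idea absent from your outline --- and the final contradiction rests on the counting fact (Remark~\ref{rem:divisible}) that $2\lvert r_2\rvert$ consecutive integers contain exactly $o(r_2)$, an odd number, of multiples of $2^{e(r_2)+1}$, while $e(r_1)>e(r_2)$ is what makes the remaining constant terms vanish under $\xi_{n_1,r_2}$. Without this machinery, or a genuine substitute for it, the crux of the theorem remains unproved.
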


Besides the introduction and an Appendix, this paper consists of three sections. In Section~\ref{sec:set_homo}, we show how to reduce the number of homotopy classes to be studied with respect to the Borsuk-Ulam property. In Section~\ref{sec:closure_sigma2}, we study the normal closure of $\sigma^2$ in $P_2(\klein)$, which is a free group of infinite rank. A convenient basis for this subgroup is obtained in the Appendix. In Section~\ref{sec:borsuk_1}, we prove Theorem \ref{th:BORSUK_TAU_1}.

The study of the free involution $\tau_{2}$ of $\torus$ for which the associated orbit space is the Klein bottle is the subject of work in progress.


\section{Some preliminary results}\label{sec:set_homo}

The following results will enable us to reduce the number of cases to be analysed in the proof of Theorems~\ref{th:BORSUK_TAU_1}.

\begin{lemma}\label{lem:reduction} 
Let $M$ and $N$ be topological spaces, let $\tau\colon\thinspace M \to M$ be a free involution, and let $H\colon\thinspace N \to N$ be a homeomorphism. Then the map $\mathcal{H}\colon\thinspace [M,N] \to [M,N]$ defined by $\mathcal{H}([f]) = [H \circ f]$ for all maps $f\colon\thinspace M \to N$ is a bijection. Further, if $\beta\in [M,N]$ is a homotopy class, then $\beta$ has the Borsuk-Ulam property with respect to $\tau$ if and only if $\mathcal{H}(\beta)$ has the Borsuk-Ulam property with respect to $\tau$.
\end{lemma}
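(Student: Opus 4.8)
The plan is to prove the two assertions separately, the bijectivity of $\mathcal{H}$ and the equivalence of the Borsuk-Ulam properties, both exploiting that $H$, being a homeomorphism, is invertible and in particular injective. First I would check that $\mathcal{H}$ is well defined: if $f,g\colon\thinspace M\to N$ are homotopic via a homotopy $F\colon\thinspace M\times[0,1]\to N$, then $H\circ F$ is a homotopy from $H\circ f$ to $H\circ g$, so $[H\circ f]=[H\circ g]$ and $\mathcal{H}([f])$ depends only on the class $[f]$. To see that $\mathcal{H}$ is a bijection, I would introduce the map $\mathcal{G}\colon\thinspace [M,N]\to[M,N]$ given by $\mathcal{G}([g])=[H^{-1}\circ g]$, which is well defined for the same reason, and observe that $\mathcal{G}\circ\mathcal{H}=\mathcal{H}\circ\mathcal{G}=\mathrm{id}_{[M,N]}$ since $H^{-1}\circ H\circ f=f$ and $H\circ H^{-1}\circ g=g$; hence $\mathcal{H}$ is invertible with inverse $\mathcal{G}$.

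For the second statement, the key observation is that composition with the injective map $H$ does not alter the coincidence set of a map: for any $f\colon\thinspace M\to N$ and any $x\in M$, the equality $f(\tau(x))=f(x)$ holds if and only if $H(f(\tau(x)))=H(f(x))$, precisely because $H$ is injective. Thus $f$ admits a point $x$ with $f(\tau(x))=f(x)$ if and only if $H\circ f$ admits such a point; in fact $f$ and $H\circ f$ have exactly the same coincidence set. I would then combine this with the first part: every representative of $\mathcal{H}(\beta)$ is of the form $H\circ g$ for a unique representative $g$ of $\beta$ (namely $g=H^{-1}\circ(H\circ g)$), so the correspondence $g\mapsto H\circ g$ is a bijection between the representatives of $\beta$ and those of $\mathcal{H}(\beta)$ under which the coincidence sets are preserved. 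Consequently every representative of $\beta$ has a coincidence point if and only if every representative of $\mathcal{H}(\beta)$ does, which is precisely the asserted equivalence of the Borsuk-Ulam properties.

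This lemma is elementary and I do not anticipate any genuine obstacle; the only point requiring care is the logical structure of the Borsuk-Ulam property, which quantifies over \emph{all} representatives of a homotopy class rather than over a single chosen map. The argument must therefore track the bijection between the two sets of representatives, and not merely the behaviour of one representative $f$ and its image $H\circ f$; it is the injectivity of $H$, applied representative by representative, that makes this transfer valid.
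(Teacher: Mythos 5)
Your proposal is correct and follows essentially the same route as the paper: the inverse $[g]\mapsto[H^{-1}\circ g]$ establishes the bijection, and coincidence points are transferred between a representative $g$ of $\beta$ and $H\circ g$ (equivalently, between a representative of $\mathcal{H}(\beta)$ and its composite with $H^{-1}$) using the invertibility of $H$. Your explicit bookkeeping of the bijection between the two sets of representatives, with coincidence sets preserved by injectivity of $H$, is just a slightly more structured phrasing of the paper's two-line argument.
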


\begin{proof}
Clearly the map $\mathcal{H}$ is a bijection whose inverse is given by $\mathcal{H}^{-1}([g]) = [H^{-1} \circ g]$. To prove the second part of the statement, let $\beta \in [M,N]$ be a homotopy class that has the Borsuk-Ulam property with respect to $\tau$, and let $g\in \mathcal{H}(\beta)$. Thus $H^{-1}\circ g\in \beta$, and hence there exists $x\in M$ such that $H^{-1}\circ g(x)=H^{-1}\circ g(\tau(x))$. Therefore $g(x)=g(\tau(x))$, and we conclude that $\mathcal{H}(\beta)$ has the Borsuk-Ulam property. The converse follows in a similar manner using $H^{-1}$.
\end{proof}

\begin{proposition}\label{prop:reduced_cases}
Let $\tau : \torus \to \torus$ be a free involution, and let $\beta, \beta' \in [\torus  ; \klein ]$ such that $\beta_\# , \beta'_\#$ are both of Type~1,~2 or~3. Suppose that the second coordinates of  $\beta_\#(\omega)$ and $\beta'_\#(\omega)$ are equal for all $\omega \in \pi_1 ( \torus , *)$ and the integer $i$ that defines the homomorphism $\beta_\#$ (resp.~$\beta'_\#$) is equal to $0$ (resp.\ $1$). Then $\beta$ has the Borsuk-Ulam property with respect to $\tau$ if and only if $\beta'$ does.
\end{proposition}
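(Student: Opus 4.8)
The plan is to reduce Proposition~\ref{prop:reduced_cases} to Lemma~\ref{lem:reduction} by exhibiting a suitable homeomorphism $H\colon\thinspace \klein \to \klein$ that realises the passage from $\beta$ to $\beta'$ at the level of homotopy classes. Since $\beta_\#$ and $\beta'_\#$ differ only in the value of the integer $i \in \{0,1\}$ (with $i=0$ for $\beta_\#$ and $i=1$ for $\beta'_\#$) while sharing the same second coordinates on all of $\pi_1(\torus,*)$, the effect of passing from $\beta$ to $\beta'$ is to add $1$ to the first coordinate of the image of the relevant generator. The natural candidate for $H$ is a homeomorphism of $\klein$ whose induced homomorphism $H_\#\colon\thinspace \zsdz \to \zsdz$ sends $(a,b)$ to $(a+\epsilon(b), b)$ for an appropriate choice, i.e.\ a map that shifts the first ($\z$) coordinate. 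The first step is therefore to identify such an $H$ explicitly and verify that it is indeed a homeomorphism of the Klein bottle.

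Concretely, I would work with the identification $\pi_1(\klein,*) = \zsdz$ already fixed in the paper, where the semidirect product structure means conjugation by the second-coordinate generator inverts the first coordinate. First I would check that the self-map $\phi$ of $\zsdz$ defined by $\phi(a,b) = (a + b, b)$ (or a closely related formula, to be pinned down by the group law) is an automorphism, and then invoke the fact that every automorphism of $\pi_1(\klein)$ that respects the orientation data is realised by a homeomorphism of $\klein$ (the mapping class group of the Klein bottle is well understood and small). Once $H$ is fixed with $H_\# = \phi$, one computes $(H \circ f)_\# = H_\# \circ f_\# = \phi \circ \beta_\#$ and checks that this equals $\beta'_\#$ up to conjugacy, so that $\mathcal{H}(\beta) = \beta'$ under the bijection of Proposition~\ref{prop:set_homotopy} and Remark~\ref{rem:homotopy_pi1}. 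Then Lemma~\ref{lem:reduction} immediately yields the equivalence of the Borsuk-Ulam property for $\beta$ and $\beta'$, since $\mathcal{H}$ preserves that property for any free involution $\tau$.

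The key computation is to confirm that composing $\beta_\#$ with $\phi$ changes only the first coordinate of the images of the generators by adding the appropriate multiple, while fixing all second coordinates, and that the result lands in the normalised form of Proposition~\ref{prop:set_homotopy} with $i=1$ rather than $i=0$. For this I would treat Types~1,~2 and~3 case by case, writing out $\beta_\#(1,0)$ and $\beta_\#(0,1)$ in each case and applying $\phi$; because the second coordinates encode the parities that distinguish the types, preserving them guarantees that $\phi \circ \beta_\#$ is a homomorphism of the same type, differing only in $i$. One subtlety is that Proposition~\ref{prop:set_homotopy} records homotopy classes only up to conjugacy, so I must verify that the conjugacy-normalisation built into the statement (reduction to $i \in \{0,1\}$) is compatible with the action of $\phi$, rather than producing a homomorphism outside the listed normal forms.

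The main obstacle will be pinning down the correct homeomorphism $H$ and, relatedly, checking that the formula for $\phi$ genuinely respects the non-trivial semidirect product law of $\zsdz$ rather than the direct product. Because conjugation in $\zsdz$ inverts the $\z$-factor, a naive "shift the first coordinate" map need not be a homomorphism, and one may have to let the shift depend on the parity of the second coordinate to obtain a well-defined automorphism that is realisable by a homeomorphism of $\klein$. Verifying realisability, and ensuring that the induced map on $[\torus,\klein]$ sends the normal form with $i=0$ to the normal form with $i=1$ without disturbing the second coordinates, is the technical heart of the argument; once this is established, the conclusion follows formally from Lemma~\ref{lem:reduction}.
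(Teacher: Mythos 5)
Your proposal is correct and follows essentially the same route as the paper: the paper's proof takes precisely the parity-dependent automorphism you anticipate, namely $h(1,0)=(1,0)$, $h(0,1)=(1,1)$ (so $h(a,b)=(a+\delta_b,b)$, which is exactly the fix you flag for the na\"ive shift $(a,b)\mapsto(a+b,b)$ failing to respect the semidirect product law), realises it by a basepoint-preserving homeomorphism of $\klein$ via~\cite[Theorem~5.6.2]{Zies}, checks the generators type by type, and concludes with Lemma~\ref{lem:reduction}. The only remark worth adding is that your conjugacy worry dissolves in the end: on the normal forms of Types~1--3, the composition $h\circ\beta_\#$ equals $\beta'_\#$ exactly, with no renormalisation needed.
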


\begin{proof}
Let $h: \zsdz \to \zsdz$ be the homomorphism defined on the generators of $\zsdz$ by $h(1,0)=(1,0)$ and $h(0,1)=(1,1)$. Then $h$ is well defined, and  it is an isomorphism whose inverse $h^{-1}: \zsdz \to \zsdz$ is given by $h^{-1}(1,0)=(1,0)$ and  $h^{-1}(0,1)=(-1,1)$. By~\cite[Theorem~5.6.2]{Zies}, there exists a homeomorphism $H: (\klein,*)\to (\klein,*)$ such that $H_\#=h$ and $H^{-1}_\#=h^{-1}$. Suppose that $\beta_\#$ and $\beta'_\#$ are both of Type~1, and that they satisfy the hypothesis of the statement, and let $f\colon\thinspace (\torus,*) \to (\klein,*)$ be a representative map of $\beta$. Without loss of generality, we may suppose that $f_\# = \beta_\#$. Assume that $\beta$ has the Borsuk-Ulam property with respect to $\tau$. Then:
\begin{align*}
(H \circ f)_\#(1,0)&=h(\beta_\# (1,0))= h(0,2s_1+1)= h(0,1)^{2s_1+1}\\
&=((1,1)(1,1))^{s_1}(1,1)=(0,2)^{s_1}(1,1)=(1, 2s_1+1), \text{and}\\
(H \circ f)_\#(0,1)&=h(\beta_\# (0,1))= h(0,2 s_2)=h(0,1)^{2 s_2}=((1,1)(1,1))^{s_2} =(0,2)^{s_2}=(0, 2 s_2).
\end{align*}
Then $H \circ f$ is a representative of $\beta'$, and the conclusion follows from Lemma~\ref{lem:reduction}. The converse follows in a similar way using $H^{-1}$ instead $H$. The arguments for the cases of homomorphisms of Types~2 and~3 are analogous, and the details are left to the reader.
\end{proof}

\begin{remark}
Using Lemma~\ref{lem:reduction}, one may show that Proposition~\ref{prop:reduced_cases} holds in more generality. However the above statement will be sufficient for our purposes.
\end{remark}


\section{The normal closure of $\sigma^2$ in $P_2(\klein)$}\label{sec:closure_sigma2}

Let $S$ be a compact, connected surface without boundary. The ordered $2$-point configuration space of $S$ is defined by $F_2(S)=\{(x, y)\in S \times S \ | \ x \neq y \}$, $D_2(S)$ is the orbit space of $F_2(S)$ by the free involution $\tau_S : F_2(S) \to F_2(S)$ defined by $\tau_S(x,y)=(y,x)$, and $P_2(S)=\pi_1(F_2(S))$ and $B_2(S)= \pi_1(D_2(S))$ are the pure and full $2$-string braid groups  of $S$ respectively~\cite{FadNeu}. We have a short exact sequence:
\begin{equation}\label{eq:sesbraid}
1 \to P_{2}(S) \to B_{2}(S) \stackrel{\pi}{\to} \ztwo \to 1,
\end{equation}
where $\pi : B_2(S)\to \ztwo$ is the homomorphism that to an element of $B_{2}(S)$ associates its permutation. If $p_1: F_2(S)\to S$ is the projection onto the first coordinate, the map $(p_1)_\# : P_2(S) \to \pi_1(S)$ may be interpreted geometrically as the surjective homomorphism that forgets the second string. Let $\sigma \in B_2(S) \setminus P_2(S)$ be the Artin generator of $B_2(S)$ that geometrically swaps the two basepoints. Then $\sigma^2 \in P_2(S)$, and the normal closure of $\sigma^2$ in $P_{2}(S)$, which we denote by $\closure$, is also the normal closure of $\sigma^2$ in $B_{2}(S)$ by~(\ref{eq:sesbraid}). In this section, we shall take $S$ to be the Klein bottle, and we will show that $\closure$ is a free group of countably-infinite rank for which we shall exhibit a basis. We will also express certain elements of $\closure$ in this basis.

The following proposition summarises some results of~\cite[Section 4]{GonGuaLaa} regarding the structure of $P_2(\klein)$ and the action by conjugation by $\sigma$ on this group.

\begin{proposition}{\cite[Theorems~19 and~20]{GonGuaLaa}}\label{prop:presentation_p2}
The group $P_2(\klein)$ is isomorphic to the semi-direct product $F(u,v) \rtimes_\theta (\zsdz)$, where $F(u,v)$ is the free group of rank $2$ on the set $\{u,v\}$ and the action $\theta: \zsdz \to \operatorname{Aut}(F(u,v))$ is defined as follows:
\begin{equation*}
\theta(m,n): \begin{cases}
u \mapsto B^{m-\delta_n} u^{\varepsilon_n} B^{-m+\delta_n}\\
v \mapsto B^m v u^{-2m} B^{-m+\delta_n}\\
B  \mapsto B^{\varepsilon_n},
\end{cases} 
\end{equation*}
where $\delta_n=\begin{cases}
0 & \text{if $n$ is even}\\
1 & \text{if $n$ is odd,}
\end{cases}$ $\varepsilon_n=(-1)^n$ and $B=uvuv^{-1}$. With respect to this description, the following properties hold:
\begin{itemize}
\item the element $\sigma \in B_2(\klein)$ satisfies $\sigma^2 =(B ; 0,0)$.
	
\item if $l_\sigma: P_2(\klein) \to P_2(\klein)$ is the homomorphism defined by $l_\sigma(b)=\sigma b \sigma^{-1}$ for all $b \in P_2 (\klein)$, then:
\begin{align*}
l_\sigma(u^r;0,0)&=((Bu^{-1})^r B^{-r} ; r,0) & l_\sigma(\id;m,0)&=(\id ; m,0)\\
l_\sigma(v^s;0,0)&=((uv)^{-s} (u B)^{\delta_s} ; 0,s) & l_\sigma(\id;0,n) &=(B^{\delta_n} ; 0,n)\\
l_\sigma(B;0,0)&=(B ; 0,0) &&
\end{align*}
for all $m,n,r,s \in \z$, where the symbol $\id$ denotes the trivial element of $F(u,v)$.

\item the homomorphism $(p_1)_\#: P_2(\klein) \to \pi_1(\klein)=\zsdz$ satisfies  $(p_1)_\# (w  ; r,s)=(r,s)$.
\end{itemize}
\end{proposition}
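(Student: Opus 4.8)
Since the statement collects results proved in \cite[Theorems~19 and~20]{GonGuaLaa}, a complete proof amounts to citing that reference; nevertheless, the plan I would follow to establish the structure from scratch is as follows.

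The starting point is the Fadell--Neuwirth fibration $p_1\colon\thinspace F_2(\klein) \to \klein$ given by projection onto the first coordinate, whose fibre over a point $x$ is $\klein \setminus \{x\}$, a space homotopy equivalent to a wedge of two circles. Since $\klein$ and its once-punctured version are aspherical, the long exact homotopy sequence of this fibration collapses to the short exact sequence
\begin{equation*}
1 \to \pi_1(\klein \setminus \{x\}) = F(u,v) \to P_2(\klein) \xrightarrow{(p_1)_\#} \pi_1(\klein) = \zsdz \to 1,
\end{equation*}
where $u,v$ are suitable loops in the punctured fibre based at the second basepoint. This already yields the description of $(p_1)_\#$ in the statement, since by construction $(p_1)_\#$ sends a braid to the homotopy class of the path traced out by its first string.

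To obtain the semi-direct product decomposition I would exhibit a section of $(p_1)_\#$. Because $\klein$ has zero Euler characteristic it carries a nowhere-zero tangent vector field, and pushing the second basepoint a short distance along this field yields a continuous section $s\colon\thinspace \klein \to F_2(\klein)$ of $p_1$; the induced splitting $\zsdz \to P_2(\klein)$ realises $P_2(\klein)$ as $F(u,v) \rtimes_\theta (\zsdz)$. The action $\theta$ is then the monodromy of the fibration: I would represent each generator of $\zsdz$ by a loop $\gamma$ in the base, lift it via $s$, and record the resulting automorphism of the free group of the fibre. Carrying this out for $(1,0)$ and $(0,1)$, with the Klein-bottle relation responsible for the factors $\varepsilon_n = (-1)^n$ and $\delta_n$, should produce the stated formulae for $\theta(m,n)$, the element $B = uvuv^{-1}$ appearing as the class of a small loop encircling the first basepoint, that is, the boundary of the once-punctured fibre.

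Finally I would identify $\sigma^2$ and compute the conjugation action $l_\sigma$. The braid $\sigma$ is the half-twist exchanging the two strings, so $\sigma^2$ is the full twist in which the second string makes one loop around the first; this is exactly the boundary class $B$, giving $\sigma^2 = (B;0,0)$. For $l_\sigma$, conjugation by $\sigma$ interchanges the roles of the two basepoints, so I would reinterpret each first-string generator $(\id;m,0)$, $(\id;0,n)$ and each fibre generator $u,v,B$ after the interchange and re-express the outcome in the chosen coordinates. I expect the main obstacle to lie precisely in these last computations: both the monodromy $\theta$ and the formulae for $l_\sigma$ demand that one fix precise geometric conventions for the loops $u$, $v$, $\gamma$ and for $\sigma$, and then carry out careful braid bookkeeping, equivalently manipulate an explicit presentation of $B_2(\klein)$, tracking the sign and parity conventions consistently throughout. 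This is the delicate content of the analysis in \cite[Section~4]{GonGuaLaa}.
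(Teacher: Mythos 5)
Your proposal is correct and takes the same approach as the paper, which gives no argument of its own but simply cites \cite[Theorems~19 and~20]{GonGuaLaa}; your outline (the Fadell--Neuwirth fibration $p_1\colon\thinspace F_2(\klein)\to\klein$ with aspherical fibre and base yielding the short exact sequence, a section built from a nowhere-zero vector field on the Euler-characteristic-zero surface $\klein$ giving the semi-direct product, the monodromy computation of $\theta$, and the identification of $\sigma^2$ with the boundary class $B=uvuv^{-1}$, followed by the explicit bookkeeping for $l_\sigma$) is precisely the strategy carried out in that reference. Nothing more is needed here.
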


From now on, we identify $P_2(\klein)$ with $F(u,v) \rtimes_\theta (\zsdz)$ without further comment.
		
\begin{remark}\label{rem:theta_parity}
It follows from Proposition~\ref{prop:presentation_p2} that for all $m,n\in \z$, the automorphism $\theta(m,n): F(u,v)\to F(u,v)$ depends only on $m$ and the parity of $n$, in particular $\theta(m,n)=\theta(m,\delta_n)$.
\end{remark}		

Consider the following maps:
\begin{equation*}
\text{$\iota: \left\{ \begin{aligned}
F(u,v) & \longrightarrow P_2(\klein)\\
w & \longmapsto (w ; 0,0)
\end{aligned}\right.$ and
$p_F: \left\{ \begin{aligned}
P_2 (\klein) & \longrightarrow  F(u,v)\\
(w ; m,n)& \longmapsto w.
\end{aligned}\right.$}
\end{equation*}
Note that $\iota$ is a homomorphism, but due to the action $\theta$, $p_F$ is not. Consider the map $\rho: F(u,v) \to F(u,v)$ defined by:
		\begin{equation}\label{eq:defi_rho}
\rho=p_F \circ l_\sigma \circ \iota,	
		\end{equation}
and the homomorphism $g: F(u,v) \to \zsdz$ defined on the basis $\{u,v\}$ by:
\begin{equation}\label{defi_g}
\begin{cases}
g(u)=(1,0)\\
g(v)=(0,1).
\end{cases}
\end{equation}
Using Proposition~\ref{prop:presentation_p2} and~(\ref{eq:defi_rho}), we obtain the following commutative diagram:
\begin{equation}\label{eq:diag_rho_g}\begin{gathered}\xymatrix{
& & &  F(u,v)\\
	F(u,v) \ar[rr]^-\iota \ar[rrrd]_-{g} \ar[rrru]^-{\rho} 
	& &  P_2(\klein)\ar[r]^-{ l_\sigma }
& P_2(\klein)=F(u,v) \rtimes_\theta(\zsdz)\ar[d]^-{ (p_1)_\#}\ar[u]_-{p_F}\\
	& & &  \zsdz=\pi_1(\klein),
	}\end{gathered}\end{equation}
from which it follows that:
\begin{equation}\label{eq:escrita_l_sigma_rho_g}
l_\sigma(w ; 0,0) =(\rho(w) ; g(w))
\end{equation}
for all $w \in F(u,v)$. Further if $w,z \in F(u,v)$ then:
		\begin{align*}
\rho(wz) & = (p_F \circ l_\sigma)(w z ; 0,0)=p_F (l_\sigma(w; 0,0) \ldotp l_\sigma(z ; 0,0)) =p_F \bigl((\rho(w) ; g(w)) \ldotp (\rho(z) ; g(z))\bigr)\\
&  =p_F \bigl(\rho(w) \theta(g(w))(\rho(z)) ; g(w)g(z)\bigr)  =\rho (w) \theta(g(w))(\rho(z)).
		\end{align*}
Thus the map $\rho: F(u,v) \to F(u,v)$ is not a homomorphism, but if $w\in \ker g$ then:
\begin{equation}\label{theta_kerg}
\rho(wz)=\rho(w) \rho(z).
\end{equation}
In Theorem~\ref{th:basis_B} of the Appendix, we prove that:
		\begin{equation}\label{base_kerg}
\ker g=\left\langle B_{k,l} := v^k u^l B u^{-l} v^{-k},\, k,l \in \z \ |-\right\rangle.
		\end{equation}
Let $w \in F(u,v)$ and $(m,n) \in \zsdz$. Using Proposition~\ref{prop:presentation_p2}, we see that $B\in \ker{g}$, and then that:
\begin{align}\label{eq:theta_kerg}
\theta(m,n)(w B w^{-1}) = \theta(m,n)(w) B^{\varepsilon_n} \theta(m,n)(w)^{-1} \in \ker{g}.
\end{align}
Further,
\begin{align}\label{eq:l_sigma_kerg}
l_\sigma(w B w^{-1} ; 0 ,0) &= l_\sigma(w ; 0,0) l_\sigma(B ; 0,0) l_\sigma(w ; 0,0)^{-1}  \stackrel{(\ref{eq:diag_rho_g})}{=} (\rho(w) ; g(w)) (B ; 0,0)(\rho(w) ; g(w))^{-1} \notag\\
&=(\rho(w) \theta(g(w))(B); g(w))(\theta(g(w)^{-1}) (\rho(w)^{-1}) ; g(w)^{-1})  \notag\\
&=(\rho(w) \theta(g(w))(B) \rho(w)^{-1} ; 0,0). 
\end{align}
Composing~(\ref{eq:l_sigma_kerg}) by $p_F$, it follows from~(\ref{eq:diag_rho_g}) and~(\ref{eq:theta_kerg}) that:
\begin{equation}\label{rho_kerg}
\rho(w B w^{-1})= \rho(w) \theta(g(w))(B) \rho(w)^{-1} \in \ker g .
\end{equation}
In particular, $\rho(B_{k,l})\in \ker{g}$ for all $k,l\in \z$, and using~(\ref{theta_kerg}) and~(\ref{base_kerg}), the restriction of $\rho$ to $\ker{g}$ yields an endomorphism of $\ker{g}$, which we also denote by $\rho$. Further, $\theta(m,n)(B_{k,l})\in \ker{g}$ for all $k,l,m,n\in \z$ using~(\ref{eq:theta_kerg}), and thus $\theta$ determines a homomorphism from $\zsdz$ to $\operatorname{Aut}(\ker{g})$, which we also denote by $\theta$. Note that for all $m,m\in \z$, the endomorphism $\theta(m,n): \ker{g} \to \ker{g}$ is indeed surjective. To see this, let $k,l\in \z$, and let $\xi\in F(u,v)$ be such that $\theta(m,n)(\xi)= v^{k}u^{l}$. Then $\theta(m,n)(\xi B^{\varepsilon_{n}} \xi^{-1})=B_{k,l}$, and $\xi B^{\varepsilon_{n}} \xi^{-1} \in \ker{g}$ because $B\in \ker{g}$. Hence the image of $\theta(m,n): \ker{g} \to \ker{g}$ contains the basis $\{ B_{k,l}\}_{k,l\in \z}$ of $\ker{g}$, and so this homomorphism is surjective.

The following result describes the subgroup $\gsigma$.

\begin{proposition}\label{prop:prop_gsigma} 
The injective homomorphism $\iota: F(u,v)\to P_{2}(\klein)$ defined by $\iota(w) = ( w ; 0,0)$ for all $w\in F(u,v)$, restricts to an isomorphism between $\ker{g}$ and $\gsigma$, which we also denote by $\iota$. In particular, $\gsigma$ is a free group of infinite rank for which a basis is given by $\{(B_{k,l} ; 0,0)\}_{k,l\in \z}$, and $l_{\sigma}(\gsigma)\subset \ker{(p_{1})_{\#}}$. Up to this isomorphism, the homomorphisms $\theta: \zsdz \to \operatorname{Aut}(\ker{g})$ and $\rho: \ker{g} \to \ker{g}$ induce homomorphisms $\zsdz \to \operatorname{Aut}(\gsigma)$ and $\gsigma \to \gsigma$, which we also denote by $\theta$ and $\rho$ respectively. Further, the following diagram is commutative:
$$\xymatrix{
\gsigma \ar[r]^-\rho \ar@{^{(}->}[d] &  \gsigma \ar@{^{(}->}[d]\\
P_2(\klein) \ar[r]^-{l_\sigma} & P_2(\klein) .
}$$
\end{proposition}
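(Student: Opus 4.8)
The plan is to prove Proposition~\ref{prop:prop_gsigma} by assembling facts that have already been established in the preceding discussion, the main new ingredient being the identification of $\im{\iota|_{\ker g}}$ with $\gsigma$. First I would recall that $\iota\colon F(u,v)\to P_2(\klein)$, $w\mapsto (w;0,0)$, is a homomorphism (noted earlier) and is injective, since it is a section of the projection $P_2(\klein)=F(u,v)\rtimes_\theta(\zsdz)\to F(u,v)$ onto the semidirect-product factor; hence its restriction to any subgroup, in particular to $\ker g$, is an injective homomorphism onto its image. The heart of the matter is to show that $\iota(\ker g)=\gsigma$, i.e.\ that the image of $\ker g$ under $\iota$ coincides with the normal closure of $\sigma^2$ in $P_2(\klein)$.

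For the inclusion $\iota(\ker g)\subseteq \gsigma$, I would use the basis~(\ref{base_kerg}), so it suffices to show $(B_{k,l};0,0)\in\gsigma$ for all $k,l\in\z$. Since $\sigma^2=(B;0,0)$ by Proposition~\ref{prop:presentation_p2}, and $\gsigma$ is by definition the normal closure of $\sigma^2$, every conjugate $(w;0,0)(B;0,0)(w;0,0)^{-1}$ lies in $\gsigma$; taking $w=v^k u^l$ and computing the product in the semidirect product (using that $B\in\ker g$ so that the $\zsdz$-coordinates cancel) yields exactly $(v^k u^l B u^{-l}v^{-k};0,0)=(B_{k,l};0,0)$, giving the desired inclusion. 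For the reverse inclusion $\gsigma\subseteq\iota(\ker g)$, I would argue that $\iota(\ker g)$ is itself a normal subgroup of $P_2(\klein)$ containing $\sigma^2=(B;0,0)$; since $\gsigma$ is the \emph{smallest} such subgroup (as normal closure), this forces $\gsigma\subseteq\iota(\ker g)$. Normality of $\iota(\ker g)$ is checked by conjugating a generator $(B_{k,l};0,0)$ by an arbitrary $(w;m,n)\in P_2(\klein)$; expanding in the semidirect product, the result is $(w\,\theta(m,n)(B_{k,l})\,w^{-1};0,0)$, and this lies in $\iota(\ker g)$ because $\theta(m,n)(B_{k,l})\in\ker g$ by~(\ref{eq:theta_kerg}) and $\ker g$ is itself normal in $F(u,v)$ (being a kernel), so the whole word lies in $\ker g$. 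These two inclusions give $\iota(\ker g)=\gsigma$, and the stated basis and infinite rank then follow immediately from~(\ref{base_kerg}).

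The remaining assertions are essentially transport of structure along the isomorphism $\iota$. The inclusion $l_\sigma(\gsigma)\subset\ker{(p_1)_\#}$ follows from~(\ref{eq:l_sigma_kerg}): a generic element of $\gsigma$ is a product of conjugates $(wBw^{-1};0,0)$ with $w\in F(u,v)$, and~(\ref{eq:l_sigma_kerg}) shows that $l_\sigma$ sends each such element to one of the form $(\,\cdot\,;0,0)$, whose image under $(p_1)_\#$ is $(0,0)$ by the third item of Proposition~\ref{prop:presentation_p2}. That $\theta$ and $\rho$ descend to endomorphisms of $\gsigma$ was already verified in the paragraph preceding the statement (using~(\ref{eq:theta_kerg}) and~(\ref{rho_kerg}), together with the surjectivity of each $\theta(m,n)$ on $\ker g$), so under the isomorphism $\iota$ these become the claimed homomorphisms $\zsdz\to\operatorname{Aut}(\gsigma)$ and $\gsigma\to\gsigma$.

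Finally, the commutativity of the square amounts to the identity $l_\sigma\circ\iota=\iota\circ\rho$ on $\ker g$. This is exactly~(\ref{eq:escrita_l_sigma_rho_g}) specialised to $w\in\ker g$: for such $w$ one has $g(w)=(0,0)$, so~(\ref{eq:escrita_l_sigma_rho_g}) gives $l_\sigma(w;0,0)=(\rho(w);g(w))=(\rho(w);0,0)=\iota(\rho(w))$, while the left-hand path of the diagram reads $l_\sigma(\iota(w))=l_\sigma(w;0,0)$; the two agree. I expect the only genuine obstacle to be the reverse inclusion $\gsigma\subseteq\iota(\ker g)$, whose clean proof rests on correctly checking normality of $\iota(\ker g)$ via the conjugation computation above; everything else is a bookkeeping exercise in the semidirect-product structure combined with results already proved in the excerpt.
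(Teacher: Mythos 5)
Your proposal is correct and follows essentially the same route as the paper: both establish $\iota(\ker{g})=\gsigma$ by the two inclusions, obtaining $\iota(B_{k,l})\in\gsigma$ as a conjugate of $\sigma^{2}=(B;0,0)$, and proving the reverse inclusion by showing $\iota(\ker{g})$ is normal in $P_{2}(\klein)$ via the semidirect-product computation $(w;q)(B_{k,l};0,0)(w;q)^{-1}=(w\,\theta(q)(B_{k,l})\,w^{-1};0,0)$ together with~(\ref{eq:theta_kerg}) and the normality of $\ker{g}$ in $F(u,v)$. Your only (harmless) deviation is cosmetic: you verify the commutativity of the square directly from~(\ref{eq:escrita_l_sigma_rho_g}) with $g(w)=(0,0)$, whereas the paper checks it on the basis elements $(B_{k,l};0,0)$ using~(\ref{eq:l_sigma_kerg}); the two arguments are equivalent.
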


\begin{proof}
Let $H=\iota(\ker{g})\subset P_2(\klein)$. We start by showing that $\gsigma=H$. To see this, first note that $\sigma^2 =(B ; 0,0 )$ and $\iota(B_{k,l})\in \gsigma$ for all $k,l\in \z$, hence $H\subset \gsigma$ by~(\ref{base_kerg}). Conversely, for all $w \in F(u,v)$, $q \in \zsdz$ and $k,l\in \z$, we have:
\begin{align}\label{conjugacao_in_h}
(w ; q)(B_{k,l} ; 0,0 )(w ; q)^{-1} = & (w \theta(q)(B_{k,l}) ; q)(\theta(q^{-1})(w^{-1}) ; q^{-1})  = (w \theta(q)(B_{k,l})w^{-1} ; 0,0) .
\end{align}
Since $\theta(q)(B_{k,l})\in \ker g$ and $\ker{g}$ is normal in $ F(u,v)$, it follows that $w \theta(q)(B_{k,l} )w^{-1} \in \ker g$, and so $(w ; q)(B_{k,l} ; 0,0 )(w ; q)^{-1}  \in \iota(\ker g)=H$ by~(\ref{conjugacao_in_h}). Hence $H$ is a normal subgroup of $P_2(\klein)$ by~(\ref{base_kerg}), and since $\sigma^{2}\in H$, we conclude that $\gsigma \subset H$. Thus $\gsigma=H$ as required. Since $\iota$ is injective, the induced homomorphism $\iota: \ker{g} \to \gsigma$ is an isomorphism, and the image of the basis $\{ B_{k,l}\}_{k,l\in \z}$ by $\iota$ yields a basis of the free group $\gsigma$. The commutativity of the given diagram follows by considering the images of the elements of this basis $\{ (B_{k,l}; 0,0)\}_{k,l\in \z}$ and using~(\ref{eq:l_sigma_kerg}). 
\end{proof}

\begin{remark}\label{rem:kerg_sigma}
Although $\ker{g}$ and $\gsigma$ are isomorphic by Proposition~\ref{prop:prop_gsigma}, our results will be stated in terms of $\gsigma$, since we will formulate most of our equations in this group.
\end{remark}

The following result provides a normal form for elements of $F(u,v)$ in terms of $g$ and $\gsigma$. 

\begin{proposition}\label{prop:normal_form}
Let $w \in F(u,v)$, and let $g(w)=(r,s)$. Then there exists a unique element $x \in \gsigma$ such that $w=u^r v^s x$.
\end{proposition}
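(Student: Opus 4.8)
The plan is to establish existence and uniqueness separately. For existence, the idea is that $u^r v^s$ is a natural ``canonical representative'' of the coset determined by $g(w)=(r,s)$, so I would set $x := (u^r v^s)^{-1} w = v^{-s} u^{-r} w$ and show that $x \in \gsigma$. By Proposition~\ref{prop:prop_gsigma} (together with Remark~\ref{rem:kerg_sigma}), it suffices to show $x \in \ker g$. Since $g$ is a homomorphism and $g(u)=(1,0)$, $g(v)=(0,1)$, I compute $g(u^r v^s)=(1,0)^r(0,1)^s$. The key point here is that the group $\zsdz$ is \emph{non-abelian}, so I must be careful: I need $g(u^r v^s)=(r,s)$ to hold in $\zsdz$. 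Using the semi-direct product multiplication, $(1,0)^r=(r,0)$ and $(0,1)^s=(0,s)$, and their product is $(r,0)(0,s)=(r+\varepsilon_0\cdot 0, s)=(r,s)$, so indeed $g(u^r v^s)=(r,s)=g(w)$. Therefore $g(x)=g(u^r v^s)^{-1}g(w)=(r,s)^{-1}(r,s)=(0,0)$, giving $x\in\ker g=\gsigma$, and rearranging yields $w=u^r v^s x$.

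For uniqueness, suppose $w = u^r v^s x = u^r v^s x'$ with $x, x' \in \gsigma$. Then $u^r v^s x = u^r v^s x'$ immediately gives $x = x'$ after left-cancelling $u^r v^s$ in the free group $F(u,v)$, since free groups are cancellative. This half is essentially automatic once the decomposition is written with $u^r v^s$ on the left in a fixed form.

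The main (and really the only) subtle point I expect is the computation that $g(u^r v^s)=(r,s)$ in the \emph{non-abelian} group $\zsdz$: one might naively expect an interaction between the $r$ and $s$ exponents coming from the action, and I would want to verify explicitly that the ordering ``$u$'s first, then $v$'s'' is exactly the ordering for which the products of images multiply out cleanly to $(r,s)$. Concretely, I would confirm $(r,0)(0,s)=(r,s)$ using the multiplication rule $(m_1,n_1)(m_2,n_2)=(m_1+\varepsilon_{n_1}m_2,\,n_1+n_2)$, where the action on the first coordinate by $(r,0)$ is trivial since $\varepsilon_0=1$. Everything else is routine: the uniqueness is free-group cancellation, and membership in $\gsigma$ is the identification $\gsigma\cong\ker g$ furnished by Proposition~\ref{prop:prop_gsigma}.
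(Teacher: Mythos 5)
Your proposal is correct and takes essentially the same approach as the paper: the paper likewise sets $x = v^{-s}u^{-r}w$, verifies $g(x)=(0,0)$ by a direct computation in the non-abelian group $\zsdz$ (the paper evaluates $(0,-s)(-r,0)(r,s)=(0,0)$, which is the same check as your $(r,0)(0,s)=(r,s)$), and dismisses uniqueness as clear, i.e.\ free-group cancellation. The one point you flag as subtle is indeed the only computation in the paper's proof, and your handling of it matches.
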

		
\begin{proof}
Let $w$ be as in the statement, and let $x= v^{-s} u^{-r} w$. Then $w=u^r v^s x$, and:
\begin{align*}
g(x)&=g(v^{-s} u^{-r} w)=  g(v)^{-s} g(u)^{-r} g(w)=(0,-s)(-r,0) (r,s)=(0,-s)(0,s)=(0,0).
\end{align*}
So $x\in \gsigma$. Clearly $x$ is unique.
\end{proof}

Let $p,q \in \z$. Since $\gsigma$ is a normal subgroup of $F(u,v)$, the following homomorphism is well defined:
\begin{equation}\label{eq:defi_conjugacao}
\begin{array}{rrcl}
c_{p,q}: & \gsigma & \longrightarrow & \gsigma\\
& x & \longmapsto & v^p u^q x u^{-q} v^{-p}.
\end{array}
\end{equation}

\begin{lemma}\label{lem:homo_gsigma}
Let $k,l,p,q\in \z$, and let $(m,n) \in \zsdz$. With the notation of Proposition~\ref{prop:presentation_p2}, there exist $\gamma, \lambda, \eta \in \gsigma$ such that:		
\begin{enumerate}[(a)]	
\item\label{it:homo_gsigmaa} $\theta(m,n)(B_{k,l})= \gamma B_{k,\varepsilon_n l-2 \delta_k m}^{ \varepsilon_n}\gamma^{-1}$.
\item\label{it:homo_gsigmab} $\rho(B_{k,l})= \lambda B_{-k, \varepsilon_{(k+1)} l }^{\varepsilon_k} \lambda^{-1}$.
\item\label{it:homo_gsigmac} $c_{p,q}(B_{k,l})= \eta B_{k+p, l+\varepsilon_{k} q} \eta^{-1}$.
\end{enumerate}
\end{lemma}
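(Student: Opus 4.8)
The plan is to establish each of the three identities by direct computation using the explicit formula for $\theta(m,n)$ given in Proposition~\ref{prop:presentation_p2}, the definition $B_{k,l}=v^k u^l B u^{-l} v^{-k}$ from~(\ref{base_kerg}), and the characterisation of $\rho$ via~(\ref{rho_kerg}). The unifying strategy for all three parts is that each map sends a conjugate $w B^{\pm 1} w^{-1}$ (with $w\in F(u,v)$) to another conjugate of some $B_{k',l'}^{\pm 1}$, and the conjugating element $w'$ must be shown to lie in $\gsigma=\ker g$; one then absorbs $w'$ into the prefactor $\gamma$, $\lambda$, or $\eta$. The key bookkeeping device throughout will be to track images under $g$ and to reduce exponents using the normal form of Proposition~\ref{prop:normal_form}, together with the fact (noted before the statement) that $\theta(m,n)$ and $\rho$ restrict to endomorphisms of $\ker g$.

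For part~(\ref{it:homo_gsigmaa}), I would apply $\theta(m,n)$ to $B_{k,l}=v^{k}u^{l}Bu^{-l}v^{-k}$ using the homomorphism property of $\theta(m,n)$ together with the explicit images $\theta(m,n)(u)=B^{m-\delta_n}u^{\varepsilon_n}B^{-m+\delta_n}$, $\theta(m,n)(v)=B^{m}vu^{-2m}B^{-m+\delta_n}$, and $\theta(m,n)(B)=B^{\varepsilon_n}$. The goal is to rewrite $\theta(m,n)(v^k u^l) \cdot B^{\varepsilon_n} \cdot \theta(m,n)(v^k u^l)^{-1}$ so that the "core" becomes $v^{k}u^{\varepsilon_n l-2\delta_k m}$ conjugated by an element $\gamma\in\gsigma$, which forces the second index of the resulting $B_{k,\bullet}^{\varepsilon_n}$ to be $\varepsilon_n l-2\delta_k m$. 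The appearance of $\delta_k$ (rather than a term linear in $k$) is the subtle point: the $v^k$ prefactor contributes a $u^{-2m}$ shift per copy of $v$, but these shifts telescope and only the parity-controlled boundary term survives, which is precisely where $\delta_k$ enters. I expect this to be the main obstacle, since it requires carefully commuting the many $B$-powers past the $u$ and $v$ letters and verifying that all leftover factors indeed lie in $\ker g$ (which can be checked by applying $g$ and using~(\ref{defi_g})).

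For part~(\ref{it:homo_gsigmab}), I would use~(\ref{rho_kerg}), namely $\rho(wBw^{-1})=\rho(w)\,\theta(g(w))(B)\,\rho(w)^{-1}$, applied with $w=v^{k}u^{l}$, so $g(w)=(k,l)$ and $\theta(g(w))(B)=B^{\varepsilon_k}$ by Remark~\ref{rem:theta_parity}. The task then reduces to computing $\rho(v^k u^l)$ via the non-homomorphism cocycle rule $\rho(wz)=\rho(w)\theta(g(w))(\rho(z))$ established just after~(\ref{eq:escrita_l_sigma_rho_g}), together with the values of $l_\sigma$ on $(u^r;0,0)$ and $(v^s;0,0)$ from Proposition~\ref{prop:presentation_p2}, which give $\rho(u)$ and $\rho(v)$ after projecting by $p_F$. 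Tracking the signs, one should find that the $v^k$ factor conjugates $B$ to $B^{\varepsilon_k}$ and reflects the first index to $-k$, while the interaction of $\rho$ with the parity of $k$ produces the factor $\varepsilon_{(k+1)}$ on the second index; again all extraneous factors must be shown to lie in $\gsigma$ and absorbed into $\lambda$.

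Part~(\ref{it:homo_gsigmac}) is the most direct: from the definition~(\ref{eq:defi_conjugacao}), $c_{p,q}(B_{k,l})=v^{p}u^{q}\,v^{k}u^{l}Bu^{-l}v^{-k}\,u^{-q}v^{-p}$, so I would commute $u^{q}$ past $v^{k}$ using the semidirect-product relations in $F(u,v)\rtimes_\theta(\zsdz)$—or more simply by observing that in $\zsdz=\langle u,v\rangle_{\ab\text{-style}}$ the relation $vu=u^{-1}v$ holds, giving $u^{q}v^{k}=v^{k}u^{\varepsilon_k q}$—to bring the word into the form $v^{p+k}u^{l+\varepsilon_k q}Bu^{-(l+\varepsilon_k q)}v^{-(p+k)}$ up to conjugation by an element $\eta\in\gsigma$. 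This yields directly $c_{p,q}(B_{k,l})=\eta B_{k+p,\,l+\varepsilon_k q}\eta^{-1}$. In all three parts the verification that the conjugating factors belong to $\gsigma$ is routine once one applies $g$ and uses that $B\in\ker g$, so the genuine difficulty is confined to the exponent arithmetic in part~(\ref{it:homo_gsigmaa}).
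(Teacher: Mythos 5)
Your proposal is correct and follows essentially the same route as the paper's proof: in each part one writes $B_{k,l}=v^k u^l B u^{-l} v^{-k}$, observes that the map sends it to a conjugate of $B^{\pm 1}$, absorbs the discrepancy between the resulting conjugator and the target $v^{k'}u^{l'}$ into $\gamma$, $\lambda$, $\eta$, and verifies membership in $\ker{g}$ by applying $g$, exactly as in the paper. Two small points: in part~(b) the correct value is $g(v^k u^l)=(\varepsilon_k l,k)$, not $(k,l)$ (your conclusion $\theta(g(v^k u^l))(B)=B^{\varepsilon_k}$ is nevertheless the right one, via $\theta(\varepsilon_k l,k)$); and in part~(a) the word-level telescoping of $B$-powers that you anticipate as the main obstacle is not actually needed, since upon setting $\gamma=\theta(m,n)(v^k u^l)\, u^{\varepsilon_{n+1}l+2\delta_k m} v^{-k}$ the conjugation identity holds tautologically, and the entire content of the proof is the computation $g(\gamma)=(0,0)$ in $\zsdz$ — which is precisely what singles out $\varepsilon_n l-2\delta_k m$ as the unique admissible second index.
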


\begin{proof}
During the proof, we will make use freely of Proposition~\ref{prop:presentation_p2}. First, we have:
\begin{align*}
\theta(m,n)(B_{k,l}) & = \theta(m,n)(v^k u^l B u^{-l} v^{-k}) = \theta(m,n)(v^k u^l)B^{\varepsilon_n} \theta(m,n)(v^k u^l )^{-1}  = \gamma B_{k, \varepsilon_n l-2 \delta_k m }^{\varepsilon_n} \gamma^{-1},
\end{align*}
where $\gamma=\theta(m,n)(v^k u^l) u^{ \varepsilon_{n+1} l+2 \delta_k m} v^{-k} \in F(u,v)$. To complete the proof of item~(\ref{it:homo_gsigmaa}), it remains to show that $\gamma\in \ker{g}$. Since:
\begin{align*}
\gamma 
& =(B^m v u^{-2m} B^{-m+\delta_n })^k(B^{m-\delta_n} u^{ \varepsilon_n}B^{-m+\delta_n})^l u^{ \varepsilon_{n+1} l+2 \delta_k m} v^{-k},
\end{align*}
and $B\in \ker{g}$, it follows that:
\begin{align*}
g(\gamma)&=\left((0,1)(-2m,0) \right)^k(\varepsilon_n l,0)(\varepsilon_{n+1} l+2 \delta_k m,0)(0,-k)=(2 m,1 )^k(\varepsilon_n l+\varepsilon_{n+1} l+2 \delta_k m ,-k)\\
& =(2 \delta_k m,k)(2 \delta_k m ,-k) =(2 \delta_k m+2 \delta_k \varepsilon_k m,0)=(0,0),
\end{align*}
using the fact that $\varepsilon_k=-1$ if $k$ is odd, and $\delta_{k}=0$ if $k$ is even. Hence $\gamma\in \ker{g}$.
To prove item~(\ref{it:homo_gsigmab}), first note that:
\begin{equation}\label{eq:gvuB}
\theta(g(v^k u^l))(B) =\theta((0,k)(l,0))(B)=\theta(\varepsilon_k l,k)(B)= B^{\varepsilon_k}. 
\end{equation}
By~(\ref{rho_kerg}) and~(\ref{eq:gvuB}), we have:
\begin{align*}
\rho(B_{k,l}) &= \rho(v^k u^l B u^{-l} v^{-k})=\rho(v^k u^l)\theta(g(v^k u^l))(B) \rho(v^k u^l)^{-1} 
 = \lambda B_{-k, \varepsilon_{(k+1)} l }^{\varepsilon_k} \lambda^{-1},
\end{align*}
where $\lambda=\rho(v^k u^l) u^{\varepsilon_k  l} v^k \in F(u,v)$. It remains to show that $\lambda\in \ker{g}$. We have:
\begin{align*}
g(\lambda) &= g(\rho(v^k u^l) u^{\varepsilon_k  l} v^k)= g((p_F \circ l_\sigma \circ i)(v^k u^l)) \ldotp (\varepsilon_k  l,k)\\
&= g(p_{F} (((uv)^{-k} (uB)^{\delta_k} ; 0,k)((B u^{-1})^l B^{-l} ; l,0))) \ldotp (\varepsilon_k  l,k)\\
& = g((uv)^{-k} (uB)^{\delta_k} \theta(0,k)((B u^{-1})^l B^{-l})) \ldotp (\varepsilon_k  l,k)\\
&= (1,1)^{-k} (\delta_k,0) \ldotp g((B^{ \varepsilon_k}(B^{- \delta_k} u^{ \varepsilon_k} B^{ \delta_k})^{-1} )^l B^{-\varepsilon_{k} l}) \ldotp (\varepsilon_k  l,k)\\
&= (\delta_{-k}, -k) (\delta_k,0) (-\varepsilon_k l,0) (\varepsilon_k  l,k)=(\delta_{-k}+\varepsilon_{-k} \delta_{k}, 0)=(0,0),
\end{align*}
since $\delta_{k}=0$ if $k$ is even, and $\varepsilon_k=-1$ if $k$ is odd. Hence $\lambda\in \ker{g}$. Finally we prove  item~(\ref{it:homo_gsigmac}). We have:
\begin{align*}
c_{p,q}(B_{k,l}) &=  v^p u^q B_{k,l} u^{-q} v^{-p}=v^p u^q v^k u^l B u^{-l} v^{-k} u^{-q} v^{-p} = \eta B_{k+p,l +\varepsilon_k q} \eta^{-1},
\end{align*}
where $\eta=v^p u^q v^k u^{\varepsilon_{k+1} q}v^{-k-p} \in F(u,v)$. It remains to show that $\eta \in \ker{g}$. This is indeed the case because:
\begin{align*}
g(\eta) & = (0,p)(q,k)(\varepsilon_{k+1} q,-k-p)=(0,p)(q+\varepsilon_k \varepsilon_{k+1} q,-p) =(0,p)(0,-p)  =(0,0),
\end{align*} 
which completes the proof.
\end{proof}

Let $\gsigmab$ denote the Abelianisation of the group $\gsigma$. By abuse of notation, for all $k,l\in \z$, we denote the image of a generator $B_{k,l}$ in $\gsigmab$ by $B_{k,l}$. By Proposition~\ref{prop:prop_gsigma}, $\gsigmab$ is the free Abelian group for which $\{ B_{k,l} := v^k u^l B u^{-l} v^{-k}\; | \; k,l\in \z \}$ is a basis, namely:
\begin{equation*}
\gsigmab= \bigoplus_{k,l \in \z} \z \left[ B_{k,l} \right].
\end{equation*}
For all $(m,n) \in \zsdz$ and $p,q \in \z$, the endomorphisms $\theta(m,n), \rho$ and $(c_{p,q})$ of $\gsigma$ induce endomorphisms $\theta(m,n)_\ab, \rho_\ab$ and $(c_{p,q})_\ab$ of $\gsigmab$ respectively, and by Lemma~\ref{lem:homo_gsigma}, for all $k,l\in \z$, they satisfy:
%
%
\begin{align}
\theta(m,n)_\ab(B_{k,l}) & =  { \varepsilon_n}B_{k ,\varepsilon_n l-2 \delta_k m} \label{eq:homo_gsigmab_theta} \\
\rho_\ab(B_{k.l}) & = {\varepsilon_k} B_{-k,\varepsilon_{(k+1)} l}, \text{ and} \label{eq:homo_gsigmab_rho} \\
(c_{p,q})_\ab(B_{k,l}) & =  B_{k+p, l+\varepsilon_{k} q}. \label{eq:homo_gsigmab_cpq}
\end{align}

Let $k, l \in \z$ and $r \in \{ 0 , 1 \}$. If $x$ and $y$ are elements of a group, let $[x,y]=xyx^{-1}y^{-1}$ denote their commutator. In the rest of the paper, we will be particularly interested in the following elements of $F(u,v)$:
\begin{enumerate}[(I)]
\item $T_{k,r}= u^{k} (B^{\varepsilon_{r}} u^{-\varepsilon_{r}})^{k\varepsilon_{r}}$.
\item $I_{k}=v^{k} (v B )^{-k}$.
\item $O_{k,l}=\left[ v^{2k},u^l \right]$.
\item $J_{k,l}=v^{2k}(v u^l  )^{-2k}$.
\end{enumerate}
As we shall now see, $T_{k,r}$, $I_{k}$, $O_{k,l}$ and $J_{k,l}$ are elements of $\gsigma$, and their projections into $\gsigmab$ will be denoted by $\widetilde{T}_{k,r}$, $\widetilde{I}_k$, $\widetilde{O}_{k,l}$ and $\widetilde{J}_{k,l}$ respectively. The following result describes the decomposition of these Abelianisations in terms of the basis $\{ B_{k,l} \}_{k,l}$. If $l\in \z$, let $\sigma_{l}$  denote its sign, \emph{i.e}\ $\sigma_{l}=1$ if $l>0$, $\sigma_{l}=-1$ if $l<0$, and $\sigma_{l}=0$ if $l=0$.

\begin{proposition}\label{prop:words}
Let $k, l \in \z$  and $r \in \{ 0 , 1 \}$. 
\begin{enumerate}[(a)]
\item  The elements $T_{k,r}, I_{k}, O_{k,l}$ and $J_{k,l}$  belong to $\gsigma$.

\item\label{it:wordsb}  If $k=0$ then $\widetilde{T}_{0,r} = \widetilde{I}_0 = 0$, and if $k=0$ or $l=0$ then $\widetilde{O}_{k,l}=\widetilde{J}_{k,l}=0$. 

\item\label{it:wordsc} For all $k,l \neq 0$:
\begin{align*}
\widetilde{T}_{k,r}&= \sigma_{k} \sum_{i=1}^{\sigma_{k}k} B_{0,\sigma_{k}(i+(\sigma_{k}(1-2r)-1)/2)}\\
\widetilde{I}_k&=- \sigma_{k} \sum_{i=1}^{\sigma_k k} B_{\sigma_k  i + ( 1 - \sigma_k )/2 ,0}\\
\widetilde{J}_{k,l}&=-\sigma_{k}\sigma_{l} \sum_{i=1}^{\sigma_{k}k} \sum_{j=1}^{\sigma_{l}l} B_{\sigma_{k}(2i-1), \sigma_{l}(j-(1+\sigma_{l})/2)}\\
\widetilde{O}_{k,l}& = \sigma_k \sigma_l \sum_{i=1}^{\sigma_k k} \sum_{j=1}^{\sigma_l l} \bigl(
B_{\sigma_k ( 2i - 1 ), - \sigma_l j + ( \sigma_l -1 )/2} 
- B_{ \sigma_k (2i-1) -1 , \sigma_l j - (1 + \sigma_l)/2}
\bigr).
\end{align*}
\end{enumerate}
\end{proposition}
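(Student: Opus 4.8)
The plan is to prove the four decomposition formulas of part~(c) by direct computation in $\gsigmab$, using the three key tools assembled earlier: the explicit action of $\theta$, $\rho$ and the conjugations $c_{p,q}$ on the basis (formulas~\eqref{eq:homo_gsigmab_theta}--\eqref{eq:homo_gsigmab_cpq}), together with Lemma~\ref{lem:homo_gsigma} and the rewriting rule~\eqref{theta_kerg} for $\rho$ restricted to $\ker g$. First, for part~(a), I would verify directly that each of $T_{k,r}$, $I_k$, $O_{k,l}$, $J_{k,l}$ lies in $\ker g$ (equivalently in $\gsigma$, via the isomorphism $\iota$ of Proposition~\ref{prop:prop_gsigma}) by applying $g$ and checking that the image is $(0,0)$. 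For instance, $g(O_{k,l})=g([v^{2k},u^l])=g(v^{2k})g(u^l)g(v^{2k})^{-1}g(u^l)^{-1}=(0,0)$ since $\zsz$ is the target and $g$ is a homomorphism into the \emph{abelian-on-$\z$-part} group $\zsdz$; the computations for $T_{k,r}$ and $I_k$ use $g(B)=(0,0)$, and for $J_{k,l}$ one checks $g(v^{2k})=g((vu^l)^{2k})=(0,2k)$. Part~(b) then follows immediately by substituting $k=0$ or $l=0$ into the definitions, since each word collapses to the identity.

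The heart of the proof is part~(c). The strategy is to rewrite each word as a product of $\gsigma$-generators $B_{a,b}=v^a u^b B u^{-b} v^{-a}$ and then abelianise. The natural mechanism is a telescoping/inductive argument: I would express, say, $T_{k,r}$ for $k>0$ as $T_{k-1,r}$ times a single conjugate of $B^{\pm 1}$, producing one basis element per step, and similarly peel off one factor at a time for the other three words. Concretely, one writes the word so that each successive appearance of $B^{\pm 1}$ is sandwiched between a prefix of $u$'s and $v$'s and its inverse; recognising that prefix as a $v^a u^b$-conjugation identifies it with some $B_{a,b}^{\pm1}$ in $\gsigmab$ via~\eqref{eq:homo_gsigmab_theta}. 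The sign factors $\varepsilon_n$, $\varepsilon_k$ and the shifts $\delta_k$, $\varepsilon_{k+1}$ appearing in Lemma~\ref{lem:homo_gsigma} are exactly what produce the $\sigma_k$, $\sigma_l$ signs and the index shifts $(\sigma_k(1-2r)-1)/2$, $(1-\sigma_k)/2$, $(1+\sigma_l)/2$, etc.\ in the stated formulas. The cases $k<0$ (and $l<0$) are handled symmetrically by writing the word as a product of inverse generators and reindexing; the $\sigma_k$, $\sigma_l$ prefactors and the $\sum_{i=1}^{\sigma_k k}$ summation ranges are precisely the bookkeeping that unifies the positive and negative cases into a single formula.

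For the two double-indexed words $O_{k,l}$ and $J_{k,l}$ I would organise the computation as a double telescoping: first reduce in the $v^{2k}$-exponent using the conjugation $c_{p,q}$ (whose abelianised action~\eqref{eq:homo_gsigmab_cpq} shifts the first index by $p$ and the second by $\varepsilon_k q$), and then expand each resulting block in the $u^l$-direction. The commutator structure of $O_{k,l}=[v^{2k},u^l]$ will naturally yield the \emph{difference} of two families of basis elements seen in the formula, corresponding to the two "corners" $B_{\sigma_k(2i-1),\,-\sigma_l j+(\sigma_l-1)/2}$ and $B_{\sigma_k(2i-1)-1,\,\sigma_l j-(1+\sigma_l)/2}$; here the odd first index $\sigma_k(2i-1)$ reflects the $v^{2k}$ (even exponent in $v$, hence odd intermediate indices after peeling) and the $\varepsilon_k=-1$ sign flips. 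The main obstacle I anticipate is purely combinatorial rather than conceptual: keeping the index shifts and signs consistent across the four $\pm$ sign regimes determined by $\sigma_k$ and $\sigma_l$, and verifying that the nested telescoping sums collapse to exactly the closed forms stated, with no off-by-one errors in the summation bounds or in the floor-type expressions $(\sigma_l-1)/2$ and $(1+\sigma_l)/2$. I would therefore carry out one representative case (say $k,l>0$ for $J_{k,l}$) in full detail to fix the inductive rewriting, and then indicate how the remaining sign cases follow by the same reindexing, rather than repeating four nearly identical computations.
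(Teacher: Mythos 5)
Your proposal follows essentially the same route as the paper: its proof (Lemmas~\ref{lem:word_t_w}--\ref{lem:word_o}) likewise derives recurrence relations by peeling off one conjugated factor of $B^{\pm 1}$ at a time, proves closed product formulas in $\gsigma$ by induction (invoking Lemma~\ref{lem:homo_gsigma}(\ref{it:homo_gsigmac}) to push $u^{l}$-conjugations through for $O_{k,l}$, and reducing negative exponents to positive ones by conjugation and inversion exactly as you describe), and then abelianises to obtain part~(\ref{it:wordsc}). One small correction to your part~(a): the target of $g$ is the non-abelian group $\zsdz$, not $\zsz$, so $g(O_{k,l})=(0,0)$ holds not because the target is abelian but because $g(v^{2k})=(0,2k)$ is central in $\zsdz$ (alternatively, membership in $\ker{g}$ falls out of the explicit product decompositions themselves).
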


The proof of Proposition~\ref{prop:words}, which is divided into the following four lemmas, consists essentially in manipulating each of the elements to obtain recurrence relations, and using induction to obtain expressions for $T_{k,r}, I_{k}, O_{k,l}$ and $J_{k,l}$ in $\gsigma$. Part~(\ref{it:wordsc}) of Proposition~\ref{prop:words} is obtained by Abelianising these expressions.

\begin{lemma}\label{lem:word_t_w}
If $k \in \z$ and $r\in \{ 0,1\}$ then $\displaystyle T_{k,r}= \prod_{i=1}^{\sigma_{k}k} B_{0,k-\sigma_{k}i-r+(\sigma_{k}+1)/2}^{\sigma_{k}}$.
\end{lemma}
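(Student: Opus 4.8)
My plan is to reduce the computation of $T_{k,r}$ to a single one-step recurrence that is uniform in the sign of $k$, and then to read off the claimed product by a two-sided induction. The first step is to rewrite the defining word in a more convenient form. Setting $C_r = (B^{\varepsilon_r}u^{-\varepsilon_r})^{\varepsilon_r}$, so that $C_0 = Bu^{-1}$ and $C_1 = u^{-1}B$, the definition $T_{k,r}= u^{k}(B^{\varepsilon_{r}} u^{-\varepsilon_{r}})^{k\varepsilon_{r}}$ becomes simply $T_{k,r} = u^{k}C_r^{k}$, valid for every $k\in\z$. I will also use repeatedly that $B_{0,j} = u^{j}Bu^{-j}$ for all $j\in\z$, which is the special case $k=0$ of the definition of $B_{k,l}$.

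The heart of the argument is the identity $T_{k,r} = B_{0,k-r}\,T_{k-1,r}$, which I claim holds for all $k\in\z$ and $r\in\{0,1\}$. To prove it, I would first check the elementary relation $u^{k-r}B\,u^{r-1}=u^{k}C_r$ separately for $r=0$ and $r=1$ (each case is an immediate simplification using the definition of $C_r$). Granting this, one computes
\begin{equation*}
B_{0,k-r}\,T_{k-1,r} = \bigl(u^{k-r}Bu^{-(k-r)}\bigr)\bigl(u^{k-1}C_r^{k-1}\bigr) = u^{k-r}Bu^{r-1}C_r^{k-1} = u^{k}C_r\,C_r^{k-1}=u^{k}C_r^{k}=T_{k,r},
\end{equation*}
where the middle equality uses the auxiliary identity. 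Note that no hypothesis on the sign of $k$ is needed, so the recurrence is genuinely valid on all of $\z$.

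With the recurrence and the base case $T_{0,r}=u^{0}C_r^{0}=\id$ (the empty product) in hand, the formula follows by induction in two directions. For $k>0$, iterating $T_{k,r}=B_{0,k-r}T_{k-1,r}$ down to $T_{0,r}$ gives $T_{k,r}=\prod_{i=1}^{k}B_{0,k-i+1-r}$, and since $\sigma_{k}=1$ and $(\sigma_{k}+1)/2=1$ this is exactly the asserted product. For $k<0$, I would rewrite the recurrence as $T_{k-1,r}=B_{0,k-r}^{-1}T_{k,r}$ and iterate downwards from $T_{0,r}=\id$, obtaining $T_{k,r}=\prod_{i=1}^{-k}B_{0,k+i-r}^{-1}$ in this left-to-right order; here $\sigma_{k}=-1$ and $(\sigma_{k}+1)/2=0$, so this again matches the statement, the sign $\sigma_{k}$ accounting simultaneously for the inverse exponents and the index shift. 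As a by-product, since each factor $B_{0,\,\cdot}^{\pm1}$ lies in $\gsigma$, this also establishes the part of Proposition~\ref{prop:words}(a) concerning $T_{k,r}$.

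The only genuinely delicate point is the bookkeeping for $k<0$: one must keep the factors of the product in the correct order when iterating the inverted recurrence, and check that the resulting second indices $k+i-r$ and the exponent $-1$ coincide with the general expression $B_{0,\,k-\sigma_{k}i-r+(\sigma_{k}+1)/2}^{\sigma_{k}}$ specialised to $\sigma_{k}=-1$. Everything else is routine once the uniform form $T_{k,r}=u^{k}C_{r}^{k}$ and the sign-independent recurrence are established; in particular, handling the two values of $r$ through the single auxiliary identity $u^{k-r}Bu^{r-1}=u^{k}C_{r}$ avoids duplicating the induction.
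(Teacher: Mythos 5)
Your proof is correct, and it takes a mildly but genuinely different route from the paper's. You establish a single, sign-independent recurrence $T_{k,r}=B_{0,k-r}\,T_{k-1,r}$, valid for every $k\in\z$, via the uniform rewriting $T_{k,r}=u^{k}C_{r}^{k}$ and the auxiliary identity $u^{k-r}Bu^{r-1}=u^{k}C_{r}$, and then iterate it in both directions from the base case $T_{0,r}=\id$. The paper instead proves the case $k\geq 1$ by the conjugate-and-append recurrence $T_{k+1,r}=u\,T_{k,r}\,u^{-1}\cdot T_{1,r}$ (using that conjugation by $u$ sends $B_{0,j}$ to $B_{0,j+1}$, and that $T_{1,r}=B_{0,1-r}$), and then handles negative $k$ not by a recurrence at all but by the global identity $T_{-k,r}=u^{-k}\,T_{k,r}^{-1}\,u^{k}$, conjugating and inverting the already-computed positive-case product. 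Your version buys uniformity: one recurrence and one auxiliary identity cover both values of $r$ and both signs of $k$, with no separate inversion argument, and the ordering of the product for $k<0$ falls out automatically from prepending factors --- the point you rightly flag as the only delicate bookkeeping, and your indices $k+i-r$ with exponent $-1$ do match the statement specialised to $\sigma_{k}=-1$, as well as the paper's formula $\prod_{i=1}^{k}B_{0,-k+i-r}^{-1}$ after substituting $k\mapsto -k$. The paper's version buys a marginally shorter positive-case verification, since appending $T_{1,r}$ after a conjugation shift needs no auxiliary identity; both arguments are complete, and both confirm as a by-product that $T_{k,r}\in\gsigma$.
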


\begin{proof} 
Let $r\in \{ 0,1 \}$. Then clearly $T_{0,r}=\id$ and $T_{1,r}=u(B^{\varepsilon_{r}} u^{-\varepsilon_{r}})^{\varepsilon_{r}}= B_{0,1-r}$, so the result holds for $k=1$. Suppose that the formula is valid for some $k\geq 1$. Then:
\begin{align*}
T_{k+1,r} &=u^{k+1} (B^{\varepsilon_{r}} u^{-\varepsilon_{r}})^{(k+1)\varepsilon_{r}}=u\ldotp u^{k} (B^{\varepsilon_{r}} u^{-\varepsilon_{r}})^{k\varepsilon_{r}}\ldotp (B^{\varepsilon_{r}} u^{-\varepsilon_{r}})^{\varepsilon_{r}}\\
&=u T_{k,r}u^{-1}\ldotp u (B^{\varepsilon_{r}} u^{-\varepsilon_{r}})^{\varepsilon_{r}}
= u\left( \prod_{i=1}^{k} B_{0,k-i+1-r} \right)u^{-1}\ldotp B_{0,1-r}=\prod_{i=1}^{k+1} B_{0,k+1-i+1-r},
\end{align*}
and so the result holds for all $k\geq 0$ by induction. Suppose that $k\geq 1$. Then:
\begin{align*}
T_{-k,r}&= u^{-k} (B^{\varepsilon_{r}} u^{-\varepsilon_{r}})^{-k\varepsilon_{r}}=u^{-k} (u^{k}(B^{\varepsilon_{r}} u^{-\varepsilon_{r}})^{k\varepsilon_{r}})^{-1}u^{k}=u^{-k} T_{k,r}^{-1}u^{k}\\
&=\left( \prod_{i=1}^{k} B_{0,-i+1-r}\right)^{-1} =\prod_{i=1}^{k} B_{0,-k+i-r}^{-1}
\end{align*}
using the result for $T_{k,r}$ in the case $k\geq 1$, which proves the formula for all $k\in \z$.
\end{proof}

\begin{lemma}\label{lem:word_i}
Let $k \in \z$. If $k=0$ then $I_k=\id$, and if $k \neq 0$ then:
\begin{equation*}
\text{$I_k = \left( \displaystyle \prod_{i=1}^{\sigma_k k} B_{i + k ( 1 -\sigma_k) / 2 ,0}\right)^{-\sigma_k}$.}
\end{equation*}
\end{lemma}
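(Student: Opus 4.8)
The plan is to mirror the inductive strategy used in the proof of Lemma~\ref{lem:word_t_w}: I would establish a recurrence relating $I_{k+1}$ to $I_k$ for $k\geq 0$, and then reduce the case of negative indices to the positive case. The whole argument rests on the single conjugation identity $v^{a} B_{i,0} v^{-a} = B_{i+a,0}$, valid for all $a,i\in\z$, which is immediate from the definition $B_{i,0}= v^{i} B v^{-i}$.

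First I would dispose of the base cases. Directly, $I_0 = v^0 (vB)^0 = \id$, and $I_1 = v(vB)^{-1} = v B^{-1} v^{-1} = B_{1,0}^{-1}$, which agrees with the claimed formula when $k=1$ (there $\sigma_k=1$, so $k(1-\sigma_k)/2=0$ and $-\sigma_k=-1$). For the inductive step with $k\geq 1$, I would peel off one factor on each side: writing $I_{k+1}= v\cdot v^{k}(vB)^{-k}\cdot (vB)^{-1}= v I_k (vB)^{-1}$ and using $(vB)^{-1}=B^{-1}v^{-1}$ gives the recurrence
\[
I_{k+1} = (v I_k v^{-1})\, B_{1,0}^{-1},
\]
since $vB^{-1}v^{-1}=B_{1,0}^{-1}$. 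Assuming inductively that $I_k = \bigl(\prod_{i=1}^{k} B_{i,0}\bigr)^{-1}$, the conjugation identity yields $v I_k v^{-1}= \bigl(\prod_{i=1}^{k} B_{i+1,0}\bigr)^{-1}=\bigl(\prod_{i=2}^{k+1} B_{i,0}\bigr)^{-1}$, whence $I_{k+1}=\bigl(\prod_{i=1}^{k+1}B_{i,0}\bigr)^{-1}$. This establishes the formula for all $k\geq 1$, where it specialises to $\bigl(\prod_{i=1}^{k}B_{i,0}\bigr)^{-\sigma_k}$.

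Finally, for $k\geq 1$ I would handle $I_{-k}$ by reducing to the positive case. A short computation gives $I_{-k}=v^{-k}(vB)^{k}=v^{-k}I_k^{-1}v^{k}$; substituting $I_k^{-1}=\prod_{i=1}^{k}B_{i,0}$ and applying the conjugation identity with $a=-k$ yields $I_{-k}=\prod_{i=1}^{k}B_{i-k,0}$. Re-indexing, this is precisely $\prod_{i=1}^{\sigma_k k}B_{i+k(1-\sigma_k)/2,0}$ with $\sigma_k=-1$ (note that then $-\sigma_k=1$), matching the stated formula. Membership $I_k\in\gsigma$ is automatic once the formula is known, since every factor $B_{i,0}$ belongs to the basis of $\gsigma$ exhibited in Proposition~\ref{prop:prop_gsigma}. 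The only step requiring genuine care — and the one most likely to produce sign or off-by-one errors — is the final reconciliation of the separate positive and negative expressions with the single closed formula involving $\sigma_k$ and $k(1-\sigma_k)/2$; this is purely bookkeeping and presents no conceptual obstacle.
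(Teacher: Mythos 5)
Your proposal is correct and follows essentially the same route as the paper: the identical base cases $I_0=\id$, $I_1=B_{1,0}^{-1}$, the recurrence $I_{k+1}=vI_kv^{-1}\ldotp B_{1,0}^{-1}$ (the paper writes this as $vI_kv^{-1}I_1$, which is the same thing) with induction for $k\geq 1$, and the conjugation trick $I_{-k}=v^{-k}I_k^{-1}v^{k}$ to reduce negative indices to the positive case. The final bookkeeping reconciling both cases with the closed formula involving $\sigma_k$ and $k(1-\sigma_k)/2$ also checks out.
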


\begin{proof}
If $k=0$, then clearly $I_0=\id$. If $k=1$, $I_1= v (vB)^{-1}=v B^{-1} v^{-1} = B_{1,0}^{-1}$, and result holds in this case. Suppose that the formula for $I_{k}$ holds for some $k\geq 1$, and let us prove the formula for $k+1$. We have:
\begin{align*}
I_{k+1} &=  v^{k+1} ( v B)^{-k-1} = v v^k (v B)^{-k} v^{-1} v (v B)^{-1} = v I_k v^{-1}	 I_1 \\
&= v \left( \prod_{i=1}^k B_{i,0} \right)^{-1} v^{-1} B_{1,0}^{-1} 
 = \left( \prod_{i=1}^k B_{i+1,0} \right)^{-1}  B_{1,0}^{-1} = \left( \prod_{i=1}^{k+1} B_{i,0} \right)^{-1},
\end{align*}
so by induction, the formula for $I_{k}$ holds for all $k\geq 0$. If $k <0 $ then $-k > 0$ and so:
\begin{align*}
I_k = & v^k (v B)^{-k}	 = v^k (v B)^{-k} v^k v^{-k} = v^k \left( v^{-k} (vB)^k \right)^{-1} v^{-k} = v^k I_{-k}^{-1} v^{-k} \\
= & v^k \left( \prod_{i=1}^{-k} B_{i,0} \right) v^{-k} = \prod_{i=1}^{-k} B_{k+i,0}.
\end{align*}
It follows that the formula given in the statement holds for all $k\neq 0$. 
\end{proof}

\begin{lemma}\label{lem:word_j}
Let $k,l \in \z$, let $\varepsilon \in \{-1,1 \}$, and let $\omega= \begin{cases}
0 & \text{if $\varepsilon=-1$}\\
1 & \text{if $\varepsilon=1$.} 
\end{cases}$
If $k=0$ or $l=0$, then $J_{k,l}=\id$, and if $k,l>0$, then:
\begin{equation*}
\text{$J_{k,\varepsilon l}=\displaystyle \prod_{i=1}^k \left(\prod_{j=1}^l B_{2k-2i+1,\varepsilon j-\omega}^{-\varepsilon} \right)$ and $J_{-k,\varepsilon l}=\displaystyle \left(\prod_{i=1}^k \left(\prod_{j=1}^l B_{-2i+1,\varepsilon j-\omega}^{-\varepsilon} \right) \right)^{-1}$.}
\end{equation*}
\end{lemma}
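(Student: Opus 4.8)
The plan is to prove Lemma~\ref{lem:word_j} by induction, mimicking the structure already used successfully in Lemmas~\ref{lem:word_t_w} and~\ref{lem:word_i}. The case $k=0$ or $l=0$ is immediate, since then either $v^{2k}=\id$ or $(vu^{\varepsilon l})^{-2k}=v^{-2k}$, so that $J_{k,\varepsilon l}=\id$ directly from definition~(IV). For the main cases, I would first fix $k,l>0$ and induct on $k$ with $l$ fixed, reducing everything to the analysis of a single elementary factor. Before starting the induction, the key preliminary computation is to rewrite the defining expression $J_{k,\varepsilon l}=v^{2k}(vu^{\varepsilon l})^{-2k}$ in a form that isolates a conjugate of one generator $B_{\ast,\ast}^{-\varepsilon}$ and a shifted copy of $J_{k-1,\varepsilon l}$. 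The natural move is to peel off two powers of $v$ at a time (since the exponent is $2k$, matching the double appearance of $v$ in the ambient group structure), writing
\[
J_{k,\varepsilon l}=v^{2}\bigl(v^{2k-2}(vu^{\varepsilon l})^{-2k+2}\bigr)v^{-2}\cdot v^{2}(vu^{\varepsilon l})^{-2}=v^{2}J_{k-1,\varepsilon l}v^{-2}\cdot v^{2}(vu^{\varepsilon l})^{-2}.
\]

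The heart of the argument is then identifying the tail factor $v^{2}(vu^{\varepsilon l})^{-2}$ as an explicit product of conjugated generators. Expanding $(vu^{\varepsilon l})^{-2}=u^{-\varepsilon l}v^{-1}u^{-\varepsilon l}v^{-1}$ and inserting factors $B=uvuv^{-1}$ (equivalently, reading off the $B_{k,l}$-decomposition using $B_{k,l}=v^{k}u^{l}Bu^{-l}v^{-k}$), I expect this tail to split as a product $\prod_{j=1}^{l}B_{1,\varepsilon j-\omega}^{-\varepsilon}$, up to conjugation by a power of $v$ that will be absorbed by the $v^{2}$-conjugation in the recurrence. This inner expansion over the index $j$ is itself a small induction on $l$ (or a telescoping computation), and it is where the offset $\varepsilon j-\omega$ and the sign $-\varepsilon$ in the exponent are pinned down; the case split between $\varepsilon=1$ and $\varepsilon=-1$ enters precisely through $\omega$. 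Conjugating $J_{k-1,\varepsilon l}$ by $v^{2}$ shifts its first index by $+2$, which, combined with the inductive hypothesis $J_{k-1,\varepsilon l}=\prod_{i=1}^{k-1}\prod_{j=1}^{l}B_{2(k-1)-2i+1,\varepsilon j-\omega}^{-\varepsilon}$, reindexes to give exactly the range $2k-2i+1$ for $i=2,\dots,k$, while the new tail supplies the $i=1$ term; this reconciliation of indices is the step requiring the most care.

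For the negative case $J_{-k,\varepsilon l}$ with $k>0$, I would follow the device already employed for $I_k$ in Lemma~\ref{lem:word_i}: write $J_{-k,\varepsilon l}=v^{-2k}(vu^{\varepsilon l})^{2k}=v^{-2k}\bigl(v^{2k}(vu^{\varepsilon l})^{-2k}\bigr)^{-1}v^{2k}$, so that $J_{-k,\varepsilon l}=v^{-2k}J_{k,\varepsilon l}^{-1}v^{2k}$. Conjugation of the already-established positive formula by $v^{-2k}$ shifts the first index down by $2k$, sending $2k-2i+1\mapsto -2i+1$, which is precisely the index appearing in the claimed expression, and inversion reverses the product and flips the exponents to match the outer inverse in the statement. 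The main obstacle throughout will be bookkeeping of the two nested indices and the sign conventions $\varepsilon,\omega,\sigma_k$ under conjugation and inversion; the free-group computation itself is routine once the single-factor tail identity is established, so I would invest the bulk of the write-up in that base computation and then let the two conjugation identities ($v^{2}$-shift for $k>0$, $v^{-2k}$-shift for $k<0$) propagate the result.
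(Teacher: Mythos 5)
Your proposal is correct and takes essentially the same route as the paper: there too, the $k=1$ case is settled by induction on $l$ via the tail identity $J_{1,\varepsilon(l+1)}=J_{1,\varepsilon l}\,B_{1,\varepsilon(l+1)-\omega}^{-\varepsilon}$, the extension to general $k$ uses the $v^{2}$-conjugation recurrence (left implicit in the paper, made explicit by you), and the negative case is handled by the same identity $J_{-k,\varepsilon l}=v^{-2k}J_{k,\varepsilon l}^{-1}v^{2k}$. One trivial slip in your prose: under your decomposition $J_{k,\varepsilon l}=v^{2}J_{k-1,\varepsilon l}v^{-2}\cdot v^{2}(vu^{\varepsilon l})^{-2}$, the conjugated factor supplies the terms $i=1,\dots,k-1$ (since $2(k-1)-2i+1+2=2k-2i+1$) and the tail $J_{1,\varepsilon l}$ supplies the term $i=k$, not the ranges you stated, and the tail needs no conjugation correction at all; this does not affect the validity of the argument.
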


\begin{proof}
If $k=0$ or $l=0$, then clearly $J_{k,l}=\id$. Now let $k=1$ and $\varepsilon\in \{1,-1\}$. Then for all $l\geq 0$, we have: 
\begin{align}
J_{1,\varepsilon(l+1)} &= v^2(vu^{\varepsilon(l+1)})^{-2}=v^2u^{-\varepsilon}u^{-\varepsilon l}v^{-1} u^{-\varepsilon}u^{-\varepsilon l}v^{-1}\notag\\
&= v^2(u^{-\varepsilon l}v^{-1}u^{-\varepsilon l}v^{-1})vu^{\varepsilon l}v u^{-\varepsilon}v^{-1} u^{-\varepsilon } u^{-\varepsilon l}v^{-1}\notag\\
&= J_{1,\varepsilon l}vu^{\varepsilon l+(\varepsilon-1)/2}(u^{(1-\varepsilon)/2}vu^{-\varepsilon}v^{-1}u^{-\varepsilon+(\varepsilon-1)/2})u^{-\varepsilon l+(1-\varepsilon)/2}v^{-1}\notag\\
&=J_{1,\varepsilon l}vu^{\varepsilon l+(\varepsilon-1)/2} B^{-\varepsilon} u^{-\varepsilon l+(1-\varepsilon)/2}v^{-1}= J_{1,\varepsilon l} B_{1, \varepsilon l+(\varepsilon-1)/2}^{-\varepsilon}.\label{eq:J1eps}
\end{align}
Hence $J_{1,1}=B_{1,0}^{-1}$ and $J_{1,-1}=B_{1,-1}$, which are in agreement with the expressions of the statement of the lemma for $k=l=1$. Suppose now that these expressions hold for $k=1$ and some $l\geq 1$. Now $\varepsilon l+(\varepsilon-1)/2=\varepsilon (l+1)-(\varepsilon+1)/2=\varepsilon (l+1)-\omega$, so by~(\ref{eq:J1eps}), we have:
\begin{equation*}
J_{1,\varepsilon(l+1)} =J_{k,\varepsilon l}= \left( \prod_{j=1}^l B_{1,\varepsilon j-\omega}^{-\varepsilon} \right) B_{1, \varepsilon l+(\varepsilon-1)/2}^{-\varepsilon}=\prod_{j=1}^{l+1} B_{1,\varepsilon j-\omega}^{-\varepsilon},
\end{equation*}
and thus the expressions of the statement hold for all $k,l\geq 1$ by induction. From this, for all $k,l>0$ and $\varepsilon\in \{ 1,-1\}$, it follows that:
\begin{align*}
J_{-k,\varepsilon l}&= v^{-2k}(v u^{\varepsilon l} )^{2k}=v^{-2k} \left(v^{2k}(v u^{\varepsilon l} )^{-2k} \right)^{-1} v^{2k} = v^{-2k} J_{k,\varepsilon l}^{-1} v^{2k}\\
&=  v^{-2k} \left(\prod_{i=1}^k \left(\prod_{j=1}^l B_{2k-2i+1,\varepsilon j-\omega}^{-\varepsilon} \right) \right)^{-1} v^{2k} = \left(\prod_{i=1}^k \left(\prod_{j=1}^l B_{-2i+1,\varepsilon j-\omega}^{-\varepsilon} \right) \right)^{-1},
\end{align*}
which completes the proof of the lemma.
\end{proof}

\begin{lemma}\label{lem:word_o}
Let $k,l \in \z$, let $\varepsilon \in \{-1,1 \}$, and let $\omega= \begin{cases}
0 & \text{if $\varepsilon=-1$}\\
1 & \text{if $\varepsilon=1$.} 
\end{cases}$
If $k=0$ or $l=0$, then $O_{k,l}=\id$. If $k,l>0$, then for all $1 \leq i \leq k$ and $1 \leq j \leq l$, there exist $\eta_{i,j,\varepsilon}, \zeta_{i,j ,\varepsilon},\mu_{i,j,\varepsilon},\nu_{i,j,\varepsilon} \in \gsigma$ such that:
\begin{enumerate}[(a)]
\item\label{it:word_oa} $O_{\varepsilon k,l}=\displaystyle \prod_{j=1}^l \left(\prod_{i=1}^k \eta_{i,j,\varepsilon} B_{2\omega k-2i+1, -j} \eta_{i,j,\varepsilon}^{-1} \zeta_{i,j,\varepsilon} B_{2\omega k-2i,j-1}^{-1} \zeta_{i,j,\varepsilon}^{-1} \right)^\varepsilon$.

\item\label{it:word_ob} $O_{\varepsilon k,-l}=\displaystyle \left( \prod_{j=1}^l \left(\prod_{i=1}^k \mu_{i,j,\varepsilon} B_{2\omega k-2i +1,l-j}  \mu_{i,j,\varepsilon}^{-1} \nu_{i,j,\varepsilon} B_{2\omega k-2i,-l+j-1}^{-1} \nu_{i,j,\varepsilon}^{-1} \right)^\varepsilon \right)^{-1}$.
\end{enumerate}
\end{lemma}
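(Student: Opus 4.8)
The plan is to follow the inductive scheme already used for Lemmas~\ref{lem:word_t_w}--\ref{lem:word_j}: dispose of the degenerate cases, isolate a single seed element, propagate it by two commutator recurrences, and finally reduce the negative indices to the positive ones. The cases $k=0$ or $l=0$ are immediate, since then one entry of the commutator $O_{k,l}=[v^{2k},u^l]$ is trivial, so $O_{k,l}=\id$. For the substantial cases I would first record the two recurrences obtained from the standard commutator identities $[a,bc]=[a,b]\,b[a,c]b^{-1}$ and $[ab,c]=a[b,c]a^{-1}[a,c]$, namely
\[
O_{k,l+1}=O_{k,l}\cdot u^{l}O_{k,1}u^{-l}\qquad\text{and}\qquad O_{k+1,1}=v^{2k}O_{1,1}v^{-2k}\cdot O_{k,1}.
\]
The second recurrence increases the first index and the first recurrence increases the second, so together they let me build the double product $\prod_{j}\prod_{i}$ of the statement from a single seed $O_{1,1}$: the inner product over $i$ is assembled first by the $k$-recurrence to produce $O_{k,1}$, and the outer product over $j$ is then produced by the $l$-recurrence.

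The seed computation is the heart of the argument. Writing $U_i=v^iuv^{-i}$, the defining relation $B=uvuv^{-1}$ gives $vuv^{-1}=u^{-1}B$, equivalently $B_{i,0}=U_iU_{i+1}$ and hence $U_{i+1}=U_i^{-1}B_{i,0}$ with $U_0=u$. Iterating twice yields $U_2=B_{0,0}^{-1}uB_{1,0}$, so that
\[
O_{1,1}=v^2uv^{-2}u^{-1}=U_2u^{-1}=B_{0,0}^{-1}\,uB_{1,0}u^{-1}.
\]
By Lemma~\ref{lem:homo_gsigma}(\ref{it:homo_gsigmac}) applied with $(p,q)=(0,1)$, conjugation by $u$ sends $B_{1,0}$ to $\eta B_{1,-1}\eta^{-1}$ for some $\eta\in\gsigma$, so $O_{1,1}=B_{0,0}^{-1}\,\eta B_{1,-1}\eta^{-1}$ is a product of a conjugate of $B_{1,-1}$ and a conjugate of $B_{0,0}^{-1}$, in agreement with the $k=l=1$, $\varepsilon=1$ instance of the statement.

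With the seed in hand I would run the two inductions. Conjugation by $v^{2k}$ and by $u^{l}$ are instances of the homomorphism $c_{p,q}$, so each inductive step transports the generators $B_{a,b}$ of the previous stage to $B_{a+2k,\,b}$ or to $B_{a,\,b+\varepsilon_a l}$ by Lemma~\ref{lem:homo_gsigma}(\ref{it:homo_gsigmac}), the new conjugating elements again lying in $\gsigma$. A direct check of the index arithmetic shows that these shifts reproduce exactly the first indices $2\omega k-2i+1$ and $2\omega k-2i$ and the second indices $-j$ and $j-1$ demanded by part~(\ref{it:word_oa}) for $\varepsilon=1$. The factors produced out of order by the recurrences can be permuted into the order $\prod_{j=1}^{l}\prod_{i=1}^{k}$ of the statement, because in a free group any product of conjugates of basis elements can be reordered at the expense of modifying the conjugators, and since each factor and each running conjugator already lies in the subgroup $\gsigma$, normality keeps the modified conjugators inside $\gsigma$.

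Finally, the negative indices are reached from the positive ones via $[a^{-1},b]=a^{-1}[a,b]^{-1}a$ and $[a,b^{-1}]=b^{-1}[a,b]^{-1}b$, which give $O_{-k,l}=v^{-2k}O_{k,l}^{-1}v^{2k}$ and $O_{k,-l}=u^{-l}O_{k,l}^{-1}u^{l}$ (and $O_{-k,-l}$ by combining both). Conjugating the positive formula by $v^{-2k}$ shifts the first indices down by $2k$, turning $2\omega k-2i+1$ into $-2i+1$ and $2\omega k-2i$ into $-2i$, that is, the $\varepsilon=-1$, $\omega=0$ case; conjugating by $u^{-l}$ shifts the second indices by $-\varepsilon_a l$, sending $-j$ to $l-j$ and $j-1$ to $-l+j-1$, which is precisely the index pattern of part~(\ref{it:word_ob}); the inversions coming from these two identities reproduce the exponents $\varepsilon$ and the outer inverses recorded in parts~(\ref{it:word_oa}) and~(\ref{it:word_ob}) once the factors are reordered as above. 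I expect the main obstacle to be organisational rather than conceptual: keeping the running products, their orders, and the attendant $\gsigma$-conjugators under control through the two inductions and the two inversions, and verifying that every index shift lands on the exact values $2\omega k-2i+1$, $2\omega k-2i$, $l-j$ and $-l+j-1$ appearing in the statement. The seed identity and the recurrences themselves are short; the care lies entirely in this bookkeeping.
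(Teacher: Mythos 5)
Your proposal is correct and takes essentially the same route as the paper: trivial commutator cases, the seed $O_{1,1}$, the two commutator recurrences (induction on $k$ for $O_{k,1}$, then $O_{k,l+1}=O_{k,l}\, u^{l}O_{k,1}u^{-l}$), conjugator bookkeeping via Lemma~\ref{lem:homo_gsigma}(\ref{it:homo_gsigmac}), and conjugate-inversion to reach negative indices. The only deviations are cosmetic: the paper computes the seed exactly as $O_{1,1}=B_{1,-1}B_{0,0}^{-1}$ with trivial conjugators and keeps all factors in the required order throughout (so it never needs your reordering-of-conjugates step, which is nonetheless valid since $g_{1}g_{2}=(g_{1}g_{2}g_{1}^{-1})g_{1}$ keeps conjugators in $\gsigma$), and it treats negative $k$ at the $l=1$ stage rather than at the end.
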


By taking $\varepsilon=1$ or $-1$ in parts~(\ref{it:word_oa}) and~(\ref{it:word_ob}) of Lemma~\ref{lem:word_o} and Abelianising, one may check that the formula for $\widetilde{O}_{k,l}$ given in Proposition~\ref{prop:words} is correct.

\begin{proof}[Proof of Lemma~\ref{lem:word_o}]
If $k=0$ or $l=0$, then clearly $O_{k,l}=\id$. So suppose that $k,l>0$, and let $\varepsilon \in \{-1,1\}$. We start by considering the case $l=1$. If $k=\varepsilon=1$ then:
\begin{align*}
O_{1,1}&= v^2 u v^{-2} u^{-1} = v u^{-1} u v u v^{-1} u v^{-1} v u^{-1} v^{-1} u^{-1} =  v u^{-1} B u v^{-1} B^{-1} = B_{1,-1} B_{0,0}^{-1},
\end{align*}
and so the expression for $O_{1,1}$ given in~(\ref{it:word_oa}) is correct by taking $\eta_{1,1,1}=\zeta_{1,1,1}=\id$. We now suppose that the expression for $O_{k,1}$ given in~(\ref{it:word_oa}) holds for some $k\geq 1$, where $\eta_{i,1,1}=\zeta_{i,1,1}=\id$ for all $1\leq i\leq k$. Then:
\begin{align*}
O_{k+1,1}&= v^{2(k+1)} u v^{-2(k+1)} u^{-1}	= v^2 v^{2k} u v^{-2k} u^{-1} v^{-2} v^2 u v^{-2} u^{-1}=v^2 O_{k,1} v^{-2} O_{1,1}\\
& =  v^2 \left(\prod_{i=1}^k B_{2k-2i+1,-1} B_{2k-2i,0}^{-1} \right) v^{-2} B_{1,-1} B_{0,0}^{-1} \\
&=  \left(\prod_{ i=1}^k B_{2(k+1)-2i+1,-1} B_{2(k+1)-2i,0}^{-1} \right) B_{1,-1} B_{0,0}^{-1}  =  \prod_{i=1}^{k+1} B_{2(k+1)-2i+1,-1} B_{2(k+1)-2i,0}^{-1}.
\end{align*}
By induction, it follows that that the expression for $O_{k,1}$ given in~(\ref{it:word_oa}) is correct for all $k\geq 1$, where $\eta_{i,1,1}=\zeta_{i,1,1}=\id$ for all $1\leq i\leq k$. Using this, for all $k\geq 1$, we obtain:
\begin{align*}
O_{-k,1}&= v^{-2k} u v^{2k} u^{-1}=v^{-2k}(v^{2k} u v^{-2k} u^{-1} )^{-1} v^{2k}=v^{-2k} O_{k,1}^{-1} v^{2k}\\
&=  v^{-2k} \left(\prod_{i=1}^k B_{2k -2i+1,-1} B_{2k-2i,0}^{-1} \right)^{-1} v^{2k}
=  \left(\prod_{i=1}^k B_{-2i+1,-1} B_{-2i,0}^{-1} \right)^{-1}.
\end{align*}
Hence the expression for $O_{-k,1}$ given in~(\ref{it:word_oa}) is correct for all $k\geq 1$, where $\eta_{i,1,-1}=\zeta_{i,1,-1}=\id$ for all $1\leq i\leq k$. We now suppose that the expression for $O_{\varepsilon k,l}$ given in~(\ref{it:word_oa}) is correct for some $l\geq 1$ and all $k\geq 1$ and $\varepsilon \in \{ 1,-1\}$, where $\eta_{i,j,\varepsilon}, \zeta_{i,j ,\varepsilon}\in \gsigma$ for all $1 \leq i \leq k$ and $1 \leq j \leq l$. We just saw that this is the case if $l=1$. We now study the case $l+1$. By Lemma~\ref{lem:homo_gsigma}(\ref{it:homo_gsigmac}), for all $1 \leq i \leq k$, there exist $\eta_{i, l+1, \varepsilon},\zeta_{i, l+1,\varepsilon} \in \gsigma$ such that $u^l B_{2\omega k-2i +1, -1} u^{-l}=\eta_{i,l+1,\varepsilon} B_{2\omega k-2i+1,-l-1} \eta_{i,l+1,\varepsilon}^{-1}$ and $u^l B_{2\omega k -2i,0} u^{-l}=\zeta_{i,l+1,\varepsilon} B_{2\omega k -2i,l} \zeta_{i,l+1,\varepsilon}^{-1}$. Thus:
\begin{align*}
O_{\varepsilon k,l+1}=& v^{2 \varepsilon k}u^{l+1} v^{-2 \varepsilon k}u^{-l-1} = v^{ 2 \varepsilon k}u^l v^{-2 \varepsilon k} u^{-l} u^l v^{2 \varepsilon k} u v^{-2 \varepsilon k}u^{-1} u^{-l}\\
=&  O_{\varepsilon k,l}u^{l} O_{\varepsilon k,1} u^{-l} \\
=&\prod_{j=1}^l \left(\prod_{i=1}^k \eta_{i,j,\varepsilon} B_{2\omega k-2i+1, -j} \eta_{i,j,\varepsilon}^{-1} \zeta_{i,j,\varepsilon} B_{2\omega k-2i,j-1}^{-1} \zeta_{i,j,\varepsilon}^{-1} \right)^\varepsilon \\
& u^l  \left( \prod_{i=1}^k B_{2\omega k-2i+1,-1} B_{2\omega  k-2 i,0}^{-1} \right)^\varepsilon u^{-l}\\
=& \prod_{j=1}^l \left(\prod_{i=1}^k \eta_{i,j,\varepsilon} B_{2\omega k-2i+1, -j} \eta_{i,j,\varepsilon}^{-1} \zeta_{i,j,\varepsilon} B_{2\omega k-2i,j-1}^{-1} \zeta_{i,j,\varepsilon}^{-1} \right)^\varepsilon \ldotp\\
& \left( \prod_{i=1}^k \eta_{i,l+1,\varepsilon} B_{2\omega k-2i+1,-l-1} \eta_{i,l+1,\varepsilon}^{-1} \zeta_{i,l+1,\varepsilon} B_{2\omega k -2i,l} \zeta_{i,l+1,\varepsilon}^{-1}  \right)^\varepsilon \\
=& \prod_{j=1}^{l+1} \left(\prod_{i=1}^k \eta_{i,j,\varepsilon} B_{2\omega k-2i+1, -j} \eta_{i,j,\varepsilon}^{-1} \zeta_{i,j,\varepsilon} B_{2\omega k-2i,j-1}^{-1} \zeta_{i,j,\varepsilon}^{-1} \right)^\varepsilon.
\end{align*}
By induction, it follows that~(\ref{it:word_oa}) holds for all $k,l\geq 1$ and $\varepsilon \in \{ 1,-1\}$. We now use part~(\ref{it:word_oa}) to prove part~(\ref{it:word_ob}). Let $k,l\geq 1$ and $\varepsilon \in \{ 1,-1\}$. By Lemma~\ref{lem:homo_gsigma}(\ref{it:homo_gsigmac}), for all $1 \leq i \leq k$ and $1 \leq j \leq l$, there exist $\mu_{i,j,\varepsilon},\nu_{i,j,\varepsilon}\in \gsigma$ such that $u^{-l} \eta_{i,j,\varepsilon} B_{2wk-2i +1,-j} \eta_{i,j,\varepsilon}^{-1} u^l= \mu_{i,j,\varepsilon} B_{2\omega k-2i +1,l-j}  \mu_{i,j,\varepsilon}^{-1}$ and $u^{-l} \zeta_{i,j,\varepsilon} B_{ 2wk-2i,j-1}  \zeta_{i,j,\varepsilon}^{-1} u^l = \nu_{i,j,\varepsilon} B_{ 2\omega k-2i,-l+j-1} \nu_{i,j,\varepsilon}^{-1}$.
\begin{align*}
O_{\varepsilon k,-l}&= v^{ 2 \varepsilon k} u^{-l} v^{- 2 \varepsilon k}u^l= u^{-l}(v^{ 2 \varepsilon k} u^l v^{- 2 \varepsilon k}u^{-l} )^{-1} u^l= u^{-l} O_{ 2 \varepsilon k,l}^{-1} u^l\\
&=  u^{-l} \left( \prod_{j=1}^l \left(\prod_{i=1}^k \eta_{i,j,\varepsilon} B_{2\omega k-2i+1, -j} \eta_{i,j,\varepsilon}^{-1} \zeta_{i,j,\varepsilon} B_{2\omega k-2i,j-1}^{-1} \zeta_{i,j,\varepsilon}^{-1} \right)^\varepsilon\right)^{-1} u^l\\
&=  \left( \prod_{j=1}^l \left(\prod_{i=1}^k \mu_{i,j,\varepsilon} B_{2\omega k-2i +1,l-j}  \mu_{i,j,\varepsilon}^{-1} \nu_{i,j,\varepsilon} B_{ 2\omega k-2i,-l+j-1}^{-1} \nu_{i,j,\varepsilon}^{-1} \right)^\varepsilon \right)^{-1},
\end{align*}
which proves part~(\ref{it:word_ob}). 
\end{proof}


\section{Proof of Theorem~\ref{th:BORSUK_TAU_1}}\label{sec:borsuk_1}

Let $\alpha \in [ \torus,* ; \klein,*]$ and $\beta \in [ \torus,\klein]$. With the notation of~\cite[Theorem~4]{GonGuaLaa}, suppose that $\alpha_\mathcal{F}=\beta$. According to~\cite[Theorem~7]{GonGuaLaa}, the pointed homotopy class $\alpha$ has the Borsuk-Ulam property with respect to the free involution $\tau_1$ if and only if the homotopy class $\beta$ has the Borsuk-Ulam property with respect to $\tau_1$. So to obtain Theorem~\ref{th:BORSUK_TAU_1}, it suffices to prove the statement for pointed homotopy classes. Before doing so, we give an algebraic criterion, similar to that of~\cite[Lemma~22]{GonGuaLaa}, to decide whether a pointed homotopy class has the Borsuk-Ulam property with respect to $\tau_1$.

\begin{lemma}\label{lem:algebra_tau_1}
Let $\alpha \in [\torus,* ; \klein, *]$ be a pointed homotopy class. Then $\alpha$ does not have the Borsuk-Ulam property with respect to $\tau_1$ if and only if there exist $a,b \in P_2(\klein)$ such that:
\begin{enumerate}[(i)]
\item\label{it:algebra_tau_1a} $a l_\sigma(b)=b a$.
\item\label{it:algebra_tau_1b} $(p_1)_\#(a l_\sigma(a))= \alpha_\# (1,0)$.
\item\label{it:algebra_tau_1c} $(p_1)_\# (b)=\alpha_\# (0,1)$.
\end{enumerate}
\end{lemma}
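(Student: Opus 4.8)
The plan is to translate the Borsuk-Ulam property for the pointed homotopy class $\alpha$ into a question about lifts of representative maps to the configuration space, and then to convert that geometric/topological statement into the purely algebraic criterion of the lemma. First I would fix a representative pointed map $f\colon\thinspace (\torus,*)\to(\klein,*)$ of $\alpha$ and recall the standard reformulation of the Borsuk-Ulam property: $\alpha$ \emph{fails} the property precisely when there is a map $f$ in the class for which $f(\tau_1(x))\neq f(x)$ for all $x\in\torus$, i.e.\ the map $x\mapsto (f(x),f(\tau_1(x)))$ factors through the configuration space $F_2(\klein)$. Since $\tau_1$ is a free involution with orbit space $\torus$ (recall $c_1\colon\thinspace\torus\to\torus$ and the homomorphism $i_1$ of~(\ref{eq:homo_tau_1})), such a factorisation descends to a map from the orbit space $\torus$ into $D_2(\klein)=F_2(\klein)/\tau_\klein$, and on fundamental groups this is governed by a homomorphism $\varphi\colon\thinspace\pi_1(\torus)\to B_2(\klein)$ lifting the prescribed data.

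The key bookkeeping step is to identify the constraints on such a $\varphi$. Writing $\pi_1(\torus)=\zsz$ with the generators $(1,0)$ and $(0,1)$, the element $(2,0)=i_1(1,0)$ lies in the image of $c_1$ and the generator of the deck group corresponds to $(1,0)$ modulo the index-two subgroup; so the covering-theoretic picture forces $\varphi(1,0)$ to be a braid $\sigma$-multiple whose square descends correctly, while $\varphi(0,1)$ stays in $P_2(\klein)$. Concretely, I would set $a=\varphi(1,0)\,\sigma^{-1}$ (or the appropriate element so that $\varphi(1,0)=a\sigma$) and $b=\varphi(0,1)\in P_2(\klein)$. The relation $\varphi(1,0)\varphi(0,1)=\varphi(0,1)\varphi(1,0)$ coming from commutativity of $\pi_1(\torus)$ then unwinds, using $\sigma b\sigma^{-1}=l_\sigma(b)$ from Proposition~\ref{prop:presentation_p2}, into condition~(\ref{it:algebra_tau_1a}), namely $a\,l_\sigma(b)=b\,a$. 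Conditions~(\ref{it:algebra_tau_1b}) and~(\ref{it:algebra_tau_1c}) then record that the composite with $(p_1)_\#\colon\thinspace P_2(\klein)\to\pi_1(\klein)$ (which geometrically forgets the second string) recovers the prescribed induced homomorphism $\alpha_\#$: the square $\varphi(1,0)^2=a\sigma a\sigma=a\,l_\sigma(a)\,\sigma^2$ projects, after applying $(p_1)_\#$ and using $\sigma^2\in\gsigma\subset\ker(p_1)_\#$, to $(p_1)_\#(a\,l_\sigma(a))=\alpha_\#(1,0)$, and likewise $(p_1)_\#(b)=\alpha_\#(0,1)$.

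For the converse I would run the argument backwards: given $a,b\in P_2(\klein)$ satisfying~(\ref{it:algebra_tau_1a})--(\ref{it:algebra_tau_1c}), I would define $\varphi\colon\thinspace\zsz\to B_2(\klein)$ by $\varphi(1,0)=a\sigma$ and $\varphi(0,1)=b$, check that~(\ref{it:algebra_tau_1a}) makes $\varphi$ well defined (the two generators commute), and then realise $\varphi$ by a map $\torus\to D_2(\klein)$ using that $\torus$ is aspherical, so that $[\torus,D_2(\klein)]$ is controlled by $\hom(\pi_1(\torus),B_2(\klein))$ up to conjugacy. Composing with the two coordinate projections produces a map $f$ with $f(\tau_1(x))\neq f(x)$ everywhere and with $f_\#$ in the class of $\alpha_\#$ dictated by~(\ref{it:algebra_tau_1b}) and~(\ref{it:algebra_tau_1c}), witnessing the failure of the Borsuk-Ulam property.

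The main obstacle I anticipate is the precise verification that $\sigma^2\in\ker(p_1)_\#$ together with the decomposition $\varphi(1,0)^2=a\,l_\sigma(a)\,\sigma^2$ yields exactly the $\alpha_\#(1,0)$ appearing in~(\ref{it:algebra_tau_1b}), rather than a shifted or twisted value; this hinges on getting the covering-space normalisation right, in particular the fact (from the setup of $\tau_1$) that it is the \emph{square} of the first generator that lifts to $c_1$, which is why only $a\,l_\sigma(a)$ and not $a$ itself is constrained on the first generator. Keeping track of where $\sigma$ versus $\sigma^2$ enters, and confirming that the asphericity of both $\torus$ and $D_2(\klein)$ genuinely lets homotopy classes of maps be detected by conjugacy classes of homomorphisms (so that no $\pi_2$ obstruction intervenes), will be the delicate points; everything else is the formal bijection between factorisations through $F_2(\klein)$ and the absence of a coincidence $f(\tau_1(x))=f(x)$.
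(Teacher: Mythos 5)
Your proposal is correct and takes essentially the same approach as the paper: the paper's proof simply defers to the argument of \cite[Lemma~22]{GonGuaLaa} (with Proposition~\ref{prop:presentation_p2} in place of \cite[Theorem~12]{GonGuaLaa}), which is exactly your translation of a coincidence-free representative into an equivariant map to $F_2(\klein)$ descending to $D_2(\klein)$, with $a=\varphi(1,0)\sigma^{-1}$ and $b=\varphi(0,1)$, commutativity of $\pi_1(\torus)$ yielding condition~(\ref{it:algebra_tau_1a}), and $(p_1)_\#$ together with $\sigma^2=(B;0,0)\in\ker{(p_1)_\#}$ yielding conditions~(\ref{it:algebra_tau_1b}) and~(\ref{it:algebra_tau_1c}). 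The converse via asphericity of $\torus$ and $D_2(\klein)$ and realisation of the homomorphism $\varphi$ is likewise the intended argument.
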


\begin{proof}
The result may be obtained in a manner similar to that of \cite[Lemma~22]{GonGuaLaa}, using Proposition~\ref{prop:presentation_p2} instead of~\cite[Theorem~12]{GonGuaLaa}.
\end{proof}

\begin{corollary}\label{cor:reduction_tau_1}
Let $\alpha,\alpha' \in [ \torus,* ; \klein,*]$ be pointed homotopy classes, and suppose that:
\begin{equation*}
\text{$\alpha_\#: \begin{cases}
	 (1,0) \mapsto(r_1,s_1)\\
	 (0,1) \mapsto(r_2,s_2)
	\end{cases}$ and $\alpha'_\#: \begin{cases}
	 (1,0) \mapsto(r_1,s_1')\\
	 (0,1) \mapsto(r_2,s_2') 
	\end{cases}$}
\end{equation*}
for some $r_1, r_2,s_1,s_1',s_2,s_2' \in \z$. If $s_1 \equiv s_1' \bmod{4}$ and $s_2 \equiv s_2' \bmod{2}$ then $\alpha$ has the Borsuk-Ulam property with respect to $\tau_1$ if and only if $\alpha'$ does.
\end{corollary}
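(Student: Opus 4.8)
The plan is to translate the Borsuk--Ulam property into the purely algebraic condition furnished by Lemma~\ref{lem:algebra_tau_1}, and then to pass from witnesses for $\alpha$ to witnesses for $\alpha'$ by multiplying by a suitable central element. Since the hypotheses $s_1\equiv s_1'\bmod 4$ and $s_2\equiv s_2'\bmod 2$, together with the equalities of the first coordinates and of $r_1,r_2$, are symmetric in $\alpha$ and $\alpha'$, it suffices to prove one implication, say that if $\alpha$ fails to have the Borsuk--Ulam property with respect to $\tau_1$ then so does $\alpha'$; the converse then follows by exchanging the roles of $\alpha$ and $\alpha'$.

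The key observation is the behaviour of the element $z=(\id;0,2)\in P_2(\klein)$. First, $z$ is central: for any $(w;m,n)$, the semi-direct product law, together with Remark~\ref{rem:theta_parity} (which gives $\theta(0,2)=\theta(0,0)=\id$) and the relation $(0,2)(m,n)=(m,n+2)=(m,n)(0,2)$ in $\zsdz$, shows that $z$ commutes with $(w;m,n)$. Second, $z$ is fixed by $l_\sigma$: by Proposition~\ref{prop:presentation_p2} we have $l_\sigma(\id;0,2)=(B^{\delta_2};0,2)=(\id;0,2)=z$ because $\delta_2=0$, and since $l_\sigma$ is a homomorphism the same holds for every power $z^t$. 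Finally $(p_1)_\#(z)=(0,2)$.

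Assume now that $\alpha$ does not have the Borsuk--Ulam property, and let $a,b\in P_2(\klein)$ be elements satisfying conditions~(\ref{it:algebra_tau_1a})--(\ref{it:algebra_tau_1c}) of Lemma~\ref{lem:algebra_tau_1} for $\alpha$. I would choose integers $t_1,t_2$ with $4t_1=s_1'-s_1$ and $2t_2=s_2'-s_2$, which exist by hypothesis, and set $a'=a z^{t_1}$ and $b'=b z^{t_2}$, then verify the three conditions of Lemma~\ref{lem:algebra_tau_1} for $\alpha'$ with this choice. Using that $z$ is central and $l_\sigma$-invariant, one computes $a'l_\sigma(b')=a\,l_\sigma(b)\,z^{t_1+t_2}$ and $b'a'=ba\,z^{t_1+t_2}$, so condition~(\ref{it:algebra_tau_1a}) for $\alpha'$ reduces to $a\,l_\sigma(b)=ba$, which is condition~(\ref{it:algebra_tau_1a}) for $\alpha$. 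Likewise $a'l_\sigma(a')=a\,l_\sigma(a)\,z^{2t_1}$, whence $(p_1)_\#(a'l_\sigma(a'))=(r_1,s_1)(0,4t_1)=(r_1,s_1')$, giving condition~(\ref{it:algebra_tau_1b}); and $(p_1)_\#(b')=(r_2,s_2)(0,2t_2)=(r_2,s_2')$, giving condition~(\ref{it:algebra_tau_1c}). Hence $\alpha'$ also fails to have the Borsuk--Ulam property.

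I do not expect a serious obstacle here: once $z$ is identified, all the verifications are short computations in the semi-direct product. The one point meriting care, and the conceptual heart of the statement, is the asymmetry between the moduli $4$ and $2$. It arises because $a$ enters condition~(\ref{it:algebra_tau_1b}) quadratically through $a\,l_\sigma(a)$, so each factor $z$ contributes $(0,4)$ to $(p_1)_\#(a'l_\sigma(a'))$, whereas $b$ enters condition~(\ref{it:algebra_tau_1c}) linearly, each factor $z$ contributing only $(0,2)$. Checking that condition~(\ref{it:algebra_tau_1a}) is preserved, which is precisely where both the centrality and the $l_\sigma$-invariance of $z$ are used simultaneously, is the only step requiring attention.
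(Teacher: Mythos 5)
Your proposal is correct and takes essentially the same route as the paper's proof: both translate the statement via Lemma~\ref{lem:algebra_tau_1} and adjust the witnesses by powers of the element $(\id;0,2)$, setting $a'=a(\id;0,2k_1)$ and $b'=b(\id;0,2k_2)$, then verify conditions~(\ref{it:algebra_tau_1a})--(\ref{it:algebra_tau_1c}) by the same computations. The only cosmetic difference is that the paper invokes the claim that the centre of $B_2(\klein)$ is $\langle(\id;0,2)\rangle$, whereas you verify the centrality and $l_\sigma$-invariance of this element directly from Proposition~\ref{prop:presentation_p2} and Remark~\ref{rem:theta_parity}, which is exactly what is needed.
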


\begin{proof}
Since the statement is symmetric with respect to $\alpha$ and $\alpha'$, it suffices to show that if $\alpha$ does not have the Borsuk-Ulam property then neither does $\alpha'$. If $\alpha$ does not have the Borsuk-Ulam property, there exist $a,b \in P_2(\klein)$ satisfying~(\ref{it:algebra_tau_1a})--(\ref{it:algebra_tau_1c}) of Lemma~\ref{lem:algebra_tau_1}. By hypothesis, there exist $k_1,k_2 \in \z$ such that $s'_1=s_1+4 k_1$ and $s'_2=s_2+2 k_2$. Let $a'=a (\id ; 0,2 k_1)$ and $b'=b (\id ; 0,2 k_2)$ in $P_2(\klein)$. It suffices to show that $a'$ and $b'$ satisfy~(\ref{it:algebra_tau_1a})--(\ref{it:algebra_tau_1c}) of Lemma~\ref{lem:algebra_tau_1}.Using Proposition~\ref{prop:presentation_p2}, one may check that the centre of $B_2(\klein)$ is the subgroup $\langle(\id ; 0,2)\rangle$. Thus:
\begin{align*}
a' l_\sigma(b')&= a (\id ; 0,2 k_1) l_\sigma(b (\id ; 0,2 k_2)) = a l_\sigma(b)  (\id ; 0,2k_1+2 k_2)  \stackrel{\text{(\ref{it:algebra_tau_1a})}}{=}  b a (\id ; 0,2k_1+2 k_2)=b' a',\\
(p_1)_\#(a' l_\sigma(a')) &= (p_1)_\#(a (\id ; 0,2 k_1)l_\sigma (a (\id ; 0,2 k_1)))=(p_1)_\#(a l_\sigma (a) (\id; 0, 4 k_1)) \\ 
& \stackrel{\text{(\ref{it:algebra_tau_1b})}}{=} (r_1, s_1)(0,4 k_1)=(r_1,s_1')=\alpha'_\# (1,0), \text{ and} \\
(p_1)_\#(b')&=(p_1)_\#(b(\id ; 0,2 k_2)) \stackrel{\text{(\ref{it:algebra_tau_1c})}}{=} (r_2,s_2)(0, 2 k_2)=(r_2,s_2')=\alpha'_\# (0,1),
\end{align*}
which proves the corollary.
\end{proof}

\begin{remark}\label{rem:summary}
Let $\alpha \in [ \torus, * ; \klein,*]$ be a pointed homotopy class, and let $\alpha_\# : \pi_1(\torus)\to \pi_1(\klein)$ be the homomorphism described in~\cite[Theorem~4]{GonGuaLaa}, and that is of one of the four types given in Proposition~\ref{prop:set_homotopy}.
\begin{enumerate}[(a)]
\item\label{it:summarya}  Suppose that $\alpha_\#$ is of Type~1, 2 or~3, and let $i\in \{0,1\}$, $s_{1}$ and $s_{2}$ be the integers that appear in the description of $\alpha_\#$ in Proposition~\ref{prop:set_homotopy}. By Proposition~\ref{prop:reduced_cases} and Corollary~\ref{cor:reduction_tau_1}, $\alpha$ has the Borsuk-Ulam property with respect to $\tau_1$ if and only if $\alpha'$ does, where $\alpha'\in [ \torus, * ; \klein,*]$ satisfies:
\begin{enumerate}[(i)]
\item\label{it:summaryai} $\alpha'_{\#}(1,0)=(0, 2s_{1}+1 \bmod{4})$ and $\alpha'_{\#}(0,1)=(0, j)$ if $\alpha_\#$ is of Type~1 (in which case $j=0$), or is of Type~2 (in which case $j=1$).
\item\label{it:summaryaii} $\alpha'_{\#}(1,0)=(0, 2s_{1} \bmod{4})$ and $\alpha'_{\#}(0,1)=(0, 1)$ if $\alpha_\#$ is of Type~3.
\end{enumerate}
So for each of Types~1, 2 and~3, there are two cases to consider, $s_{1}=0$, and $s_{1}=1$. 

\item\label{it:summaryb} Suppose that $\alpha_\#$ is of Type~4, and let $r_{1},r_{2},s_{1}$ and $s_{2}$ be the integers that appear in the description of $\alpha_\#$ in Proposition~\ref{prop:set_homotopy}, where $r_{1}\geq 0$. By Proposition~\ref{prop:reduced_cases} and Corollary~\ref{cor:reduction_tau_1}, $\alpha$ has the Borsuk-Ulam property with respect to $\tau_1$ if and only if $\alpha'$ does, where $\alpha'\in [ \torus, * ; \klein,*]$ satisfies $\alpha'_{\#}(1,0)=(r_{1}, 2s_{1} \bmod{4})$ and $\alpha'_{\#}(0,1)=(r_{2}, 0)$. So for each pair of integers $(r_{1},r_{2})$, where $r_{1}\geq 0$, there are two cases to consider, $s_{1}=0$, and $s_{1}=1$.
\end{enumerate}
\end{remark}

To prove Theorem~\ref{th:BORSUK_TAU_1}, it suffices to study the cases described by Remark~\ref{rem:summary}. This will be carried out in Propositions~\ref{prop:tau_1_cases_134}--\ref{prop:tau_1_case_6} below. Part of Proposition~\ref{prop:tau_1_cases_134}  (resp.\ Proposition~\ref{prop:tau_1_case_2}) treats the cases of Remark~\ref{rem:summary}(\ref{it:summarya})(\ref{it:summaryai}) (resp. Remark~\ref{rem:summary}(\ref{it:summarya})(\ref{it:summaryaii})), and part of Proposition~\ref{prop:tau_1_cases_134} and Propositions~\ref{prop:tau_1_case_5} and~\ref{prop:tau_1_case_6} deal with the cases of Remark~\ref{rem:summary}(\ref{it:summaryb}). In each case, we will make use of Proposition~\ref{prop:presentation_p2} and its notation, as well as the commutative diagram~(\ref{eq:diag_rho_g}).

\begin{proposition}\label{prop:tau_1_cases_134}
Let:
\begin{equation*}
\Sigma=\{ (0,2s+1,0,j) \, \vert \, j,s\in \{0,1\} \} \cup \{ (r_{1},0,r_{2},0) \, \vert \, r_{1},r_{2}\in \z,\, r_{1}\geq 0 \} \cup \{ (0,2,0,0)\}.
\end{equation*}
Let $\alpha \in [ \torus, * ; \klein,*]$ be a pointed homotopy class such that $\alpha_\#: \begin{cases}
(1,0) \mapsto(\rho,\gamma)\\
(0,1) \mapsto(\xi,\tau),
\end{cases}$ where $(\rho,\gamma,\xi,\tau)\in \Sigma$. Then $\alpha$ does not have the Borsuk-Ulam property with res\-pect to $\tau_1$.
\end{proposition}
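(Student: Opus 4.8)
The plan is to show that for each parameter tuple $(\rho,\gamma,\xi,\tau)\in \Sigma$, the pointed homotopy class $\alpha$ fails the Borsuk-Ulam property with respect to $\tau_1$, and by Lemma~\ref{lem:algebra_tau_1} this amounts to exhibiting a pair of elements $a,b\in P_2(\klein)$ satisfying conditions~(\ref{it:algebra_tau_1a})--(\ref{it:algebra_tau_1c}). The set $\Sigma$ has three pieces, corresponding (via Remark~\ref{rem:summary}) to the reduced Type~1 and~2 classes, to the reduced Type~4 classes with $s_1$ even, and to the single reduced Type~3 class with $s_1$ even. The strategy is uniform: guess $a$ and $b$ as explicit elements of $P_2(\klein)=F(u,v)\rtimes_\theta(\zsdz)$, verify~(\ref{it:algebra_tau_1b}) and~(\ref{it:algebra_tau_1c}) by applying $(p_1)_\#(w;r,s)=(r,s)$ together with the formulas for $l_\sigma$ from Proposition~\ref{prop:presentation_p2}, and then check the commutation relation~(\ref{it:algebra_tau_1a}) directly in the semi-direct product.

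First I would handle the generic Type~4 piece $(r_1,0,r_2,0)$, since this is the most natural case and likely suggests the right ansatz. Here condition~(\ref{it:algebra_tau_1c}) forces $(p_1)_\#(b)=(r_2,0)$ and~(\ref{it:algebra_tau_1b}) forces $(p_1)_\#(a\,l_\sigma(a))=(r_1,0)$. Because both target second coordinates are even (indeed zero) and the twisting $\theta$ is controlled by parity, the simplest guess is to take $a$ and $b$ in the ``diagonal'' part, e.g.\ $a=(\id;\,r_1,0)$ (recalling $l_\sigma(\id;m,0)=(\id;m,0)$, so $a\,l_\sigma(a)=(\id;2r_1,0)$ — this would give the wrong first coordinate, so more care is needed) and adjust by choosing $a$ with an $F(u,v)$-component and a half-contribution, or alternatively take $a=(\id;\text{something with }m,0)$ arranged so that $a\,l_\sigma(a)$ has first coordinate exactly $r_1$. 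The key identity to exploit is $l_\sigma(\id;m,0)=(\id;m,0)$, which means elements supported on the first $\z$-factor are $l_\sigma$-fixed; this makes~(\ref{it:algebra_tau_1a}) reduce to an ordinary commutativity check once $b$ is chosen so that $l_\sigma(b)$ is computable via the listed formulas.

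For the Type~1/2 piece $(0,2s+1,0,j)$ and the Type~3 piece $(0,2,0,0)$, the first coordinates vanish so $a,b$ should be taken inside $\ker(p_1)_\#$ up to a controlled diagonal correction, and the relevant $l_\sigma$-formulas are $l_\sigma(v^s;0,0)=((uv)^{-s}(uB)^{\delta_s};0,s)$ and $l_\sigma(\id;0,n)=(B^{\delta_n};0,n)$. For the odd second coordinate $2s+1$ in condition~(\ref{it:algebra_tau_1b}), the natural move is to write $a$ as a product involving a $v$-power and an element of $\gsigma$, arranged so that $a\,l_\sigma(a)$ lands in the prescribed coset; here Proposition~\ref{prop:normal_form} and the explicit words $T_{k,r},I_k,O_{k,l},J_{k,l}$ of Proposition~\ref{prop:words} are the intended tools for expressing and simplifying the $\gsigma$-components. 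The commutation~(\ref{it:algebra_tau_1a}) is then checked by pushing everything into the normal form $(w;r,s)$ and comparing both the $\zsdz$-coordinates and the $F(u,v)$-coordinates, the latter using the action $\theta$ and the commutativity of the abelianised computations in $\gsigmab$ via~(\ref{eq:homo_gsigmab_theta})--(\ref{eq:homo_gsigmab_cpq}).

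I expect the main obstacle to be verifying condition~(\ref{it:algebra_tau_1a}), the commutation relation $a\,l_\sigma(b)=ba$, rather than the two $(p_1)_\#$-conditions, which are essentially bookkeeping on the $\zsdz$-quotient. The difficulty is that $l_\sigma$ does not preserve the product structure in a naive way, so after expanding $l_\sigma(b)$ via Proposition~\ref{prop:presentation_p2} one obtains a word in $F(u,v)$ twisted by $\theta$ whose equality with the corresponding word coming from $ba$ must be checked inside the free group $F(u,v)$ — an equality of genuine group elements, not merely of abelianisations. The plan to manage this is to choose $a$ and $b$ as simply as the constraints allow (ideally with at least one of them in the $l_\sigma$-fixed subgroup $\{(\id;m,0)\}$, which trivialises half of the relation), and to reduce the remaining free-group identity to the relations recorded in Lemma~\ref{lem:homo_gsigma} and the conjugation homomorphisms $c_{p,q}$ of~(\ref{eq:defi_conjugacao}); the single extra element $(0,2,0,0)$ is included in $\Sigma$ precisely because it is the borderline even-$s_1$ Type~3 case and must be dispatched by a slightly different explicit choice than the odd cases, so I would treat it last once the pattern from the other pieces is clear.
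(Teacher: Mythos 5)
You have identified the paper's framework correctly: by Lemma~\ref{lem:algebra_tau_1} it suffices to exhibit $a,b\in P_2(\klein)$ satisfying conditions~(\ref{it:algebra_tau_1a})--(\ref{it:algebra_tau_1c}), and the twisted commutation~(\ref{it:algebra_tau_1a}) is indeed the real constraint. But your proposal stops exactly where the proof begins: you never produce the witnesses, and the concrete attempts you do sketch fail in ways you do not repair. For the Type~4 piece your first guess $a=(\id;r_1,0)$ gives first coordinate $2r_1$, as you note, and your proposed fix --- a ``half-contribution'' --- is unavailable when $r_1$ is odd, which $\Sigma$ permits. The paper's resolution is the uniform pair $a=(u^{\delta_{\rho}} v^{\delta_{\gamma}} B^{\delta_{\gamma}(\gamma-\delta_{\gamma})/2};(\rho-\delta_{\rho})/2,(\gamma-\delta_{\gamma})/2)$ and $b=(B^{-\delta_{\rho}\xi};\xi,\tau)$: the odd part of $\rho$ is carried by the free-group factor $u^{\delta_{\rho}}$, whose image $l_\sigma(u;0,0)=(Bu^{-1}B^{-1};1,0)$ supplies the missing unit in the first coordinate of $(p_1)_\#(a\,l_\sigma(a))$. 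More seriously, condition~(\ref{it:algebra_tau_1a}) genuinely requires the correction factor $B^{-\delta_{\rho}\xi}$ in $b$, which nothing in your outline generates: with the naive $b=(\id;\xi,0)$ and $\rho$ odd one computes $a\,l_\sigma(b)=(u;m_1+\xi,0)$ but $ba=(B^{\xi}uB^{-\xi};m_1+\xi,0)$, and these differ in $F(u,v)$ for all $\xi\neq 0$.

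There is also a methodological problem in how you propose to verify~(\ref{it:algebra_tau_1a}): you suggest comparing the $F(u,v)$-components via the abelianised formulas~(\ref{eq:homo_gsigmab_theta})--(\ref{eq:homo_gsigmab_cpq}), and although you acknowledge that the required equality lives in the free group itself, you offer no mechanism to establish it there; an identity in $\gsigmab$ cannot certify one in $F(u,v)$. In the paper the check is a direct free-group computation, organised through the general formulas~(\ref{eq:projba}) and~(\ref{eq:projalsigma}) of Lemma~\ref{lem:gencalc} (which need only $b_{2}=0$) and split into the cases $\rho=\xi=0$ and $\gamma=\tau=0$; the machinery you invoke --- Proposition~\ref{prop:words}, the words $T_{k,r}$, $I_k$, $O_{k,l}$, $J_{k,l}$, Lemma~\ref{lem:homo_gsigma} --- is the toolkit for the \emph{positive} Borsuk-Ulam results (Propositions~\ref{prop:tau_1_case_2} and~\ref{prop:tau_1_case_6}), not for this non-existence statement. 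Finally, your expectation that $(0,2,0,0)$ needs a separate construction is off: the same uniform pair covers it, reducing there to $a=(\id;0,1)$, $b=(\id;0,0)$. In short, the framework matches the paper, but the core content --- the explicit witnesses and the free-group verification of the commutation --- is missing, so the proposal has a genuine gap.
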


\begin{proposition}\label{prop:tau_1_case_2}
Let $\alpha \in [ \torus, * ; \klein,*]$ be a pointed homotopy class such that $\alpha_\#: \begin{cases}
(1,0) \mapsto (0,2s)\\
(0,1) \mapsto (0,1),
\end{cases}$	 where $s \in \{ 0,1 \}$. Then $\alpha$ has the Borsuk-Ulam property with respect to $\tau_1$.
\end{proposition}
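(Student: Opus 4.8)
The plan is to prove the contrapositive of the algebraic criterion: I want to show that for $\alpha_\#(1,0)=(0,2s)$ and $\alpha_\#(0,1)=(0,1)$ with $s\in\{0,1\}$, there cannot exist $a,b\in P_2(\klein)$ satisfying conditions~(\ref{it:algebra_tau_1a})--(\ref{it:algebra_tau_1c}) of Lemma~\ref{lem:algebra_tau_1}; by that lemma this is exactly the assertion that $\alpha$ has the Borsuk-Ulam property with respect to $\tau_1$. So I suppose for contradiction that such $a,b$ exist. Writing $a=(w_a;m_a,n_a)$ and $b=(w_b;m_b,n_b)$ in the semidirect-product coordinates of Proposition~\ref{prop:presentation_p2}, I first extract the consequences of the $(p_1)_\#$-conditions. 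Since $(p_1)_\#(w;r,s)=(r,s)$, condition~(\ref{it:algebra_tau_1c}) forces $(m_b,n_b)=\alpha_\#(0,1)=(0,1)$, so $b=(w_b;0,1)$. For condition~(\ref{it:algebra_tau_1b}), I compute $(p_1)_\#(a\,l_\sigma(a))$; using that $l_\sigma(\id;m,n)$ has second $(p_1)_\#$-coordinate equal to $(m,n)$ (as recorded in Proposition~\ref{prop:presentation_p2}), this projection equals $(m_a,n_a)+(m_a,n_a)$ computed in $\zsdz$, hence $(p_1)_\#(a\,l_\sigma(a))=(2m_a,2n_a)$ up to the semidirect-product twist. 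Comparing with $\alpha_\#(1,0)=(0,2s)$ pins down the parity of $n_a$ and forces the first coordinate to vanish.

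The decisive structural feature is the parity of $n_b$: since $b=(w_b;0,1)$, the exponent $n_b=1$ is \emph{odd}, and this is what I expect to exploit. The heart of the argument is condition~(\ref{it:algebra_tau_1a}), namely $a\,l_\sigma(b)=ba$, which I will analyze by projecting into an abelian quotient where the data become tractable. First I would apply $(p_1)_\#$ to this equation to record the necessary first-order constraint in $\zsdz$, but this alone will not yield a contradiction; the obstruction must come from the $F(u,v)$-component. The natural target is $\gsigmab$, the free abelian group of Proposition~\ref{prop:prop_gsigma} with basis $\{B_{k,l}\}$, together with the induced maps $\theta(m,n)_\ab$, $\rho_\ab$ and $(c_{p,q})_\ab$ whose explicit formulas~(\ref{eq:homo_gsigmab_theta})--(\ref{eq:homo_gsigmab_cpq}) make the commutator relation computable. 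The plan is to rewrite $a\,l_\sigma(b)=ba$ as a conjugacy/commutator identity inside $\gsigma$ (after the $(p_1)_\#$-parts have been matched), then abelianize into $\gsigmab$ and read off a system of equations on the coefficients in the basis $\{B_{k,l}\}$.

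The main obstacle will be controlling how the odd exponent $n_b=1$ propagates through the action. Concretely, applying $\theta(m,n)_\ab$ with $n$ odd sends $B_{k,l}\mapsto -B_{k,\,-l-2\delta_k m}$ by~(\ref{eq:homo_gsigmab_theta}), and $\rho_\ab$ sends $B_{k,l}\mapsto \varepsilon_k B_{-k,\varepsilon_{(k+1)}l}$ by~(\ref{eq:homo_gsigmab_rho}); the sign changes and index reflections produced by these maps are exactly the phenomena that should make the relevant coefficient equation unsolvable. I expect the contradiction to take the following shape: after abelianizing $a\,l_\sigma(b)=ba$ and using~(\ref{eq:escrita_l_sigma_rho_g}) to expand $l_\sigma$ via $\rho$ and $g$, the equation reduces to an identity of the form $(\operatorname{id}-\varphi)(\xi)=\chi$ in $\gsigmab$, where $\varphi$ is a composite of $\theta(\cdot)_\ab$, $\rho_\ab$ and a conjugation $(c_{p,q})_\ab$ determined by $b$, and $\chi$ is a fixed element forced by the $(p_1)_\#$-constraints. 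The sign $-1$ coming from the odd second coordinate will make $\operatorname{id}-\varphi$ act on the relevant free summand as multiplication by $2$ (or otherwise with nonzero determinant) on the side where $\chi$ is odd, which is impossible over $\z$. Verifying this requires a careful bookkeeping of which basis elements $B_{k,l}$ can appear, and showing the target coefficient has the wrong parity; this parity/sign incompatibility is precisely where the case $n_b$ odd diverges from the Type~1 and Type~4 cases treated in Proposition~\ref{prop:tau_1_cases_134}, and is the step I anticipate demanding the most care.
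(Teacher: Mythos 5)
Your overall skeleton coincides with the paper's: argue by contradiction from Lemma~\ref{lem:algebra_tau_1}, write $a,b$ in the normal form of Proposition~\ref{prop:normal_form}, and abelianise the relation $a\,l_\sigma(b)=ba$ into $\gsigmab$ to extract a mod-$2$ obstruction. However, two of your steps fail as stated. First, the projection computation: $(p_1)_\#(l_\sigma(a))$ is \emph{not} $(m_a,n_a)$, because by~(\ref{eq:escrita_l_sigma_rho_g}) the word part $w_a=u^{a_1}v^{a_2}x$ contributes $g(w_a)$ to the projection. The correct constraint from Lemma~\ref{lem:algebra_tau_1}(\ref{it:algebra_tau_1b}) (take $b=a$ in~(\ref{eq:lsigb})) reads $(0,2s)=(m_1+(-1)^{n_1}a_1+(-1)^{n_1+a_2}m_1,\,2n_1+a_2)$, which determines the \emph{word} exponents $a_2=2s-2n_1$ and $a_1=-2\delta_{n_1+1}m_1$, while $m_1$ and $n_1$ remain completely free; nothing ``forces the first coordinate to vanish'', and no parity of $n_a$ is pinned down. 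If you impose $m_a=0$ you exclude only a subfamily of candidate pairs $(a,b)$, whereas the proposition requires ruling out \emph{all} of them uniformly in $m_1,n_1$ --- which is why the paper's Lemma~\ref{lem:aux_1} carries $m_1,n_1$ as parameters all the way to the end. (These identities for $a_1,a_2$ are also exactly what is needed later to expand the word equation, so getting them wrong derails the rest.)

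Second, your anticipated endgame $(\operatorname{id}-\varphi)(\xi)=\chi$ with $\operatorname{id}-\varphi$ of nonzero determinant, or acting as multiplication by $2$, does not match what actually comes out, and no divisibility or invertibility argument is available: the abelianised relation (the paper's~(\ref{eq:newabeq})) involves \emph{two} independent unknowns $x,y$, acted on by two different operators $\mu_1,\mu_2$, each a signed sum of two basis-permuting maps --- for instance $\mu_1(B_{k,l})=B_{k,l-2\varepsilon_k m_1}+B_{k,-l}$, which is generically not $2$ times anything and has large image. The obstruction that works is a linear functional, not a determinant: the augmentation-type homomorphism $\xi\colon\gsigmab\to\ztwo$ with $\xi(B_{k,l})=\overline{1}$ for all $k,l$ annihilates the images of $\mu_1$ and $\mu_2$, since each image vector is a $\pm$-combination of exactly two basis vectors, while the right-hand side evaluates to $\overline{\varepsilon}_{n_1}=\overline{1}$. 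Moreover, verifying that the right-hand side is odd requires expanding the words $(Bu^{-1})^{b_1}$, $(vB)^{a_2}$ and the commutator $O_{n_1-s,-2m_1}=[v^{2(n_1-s)},u^{-2m_1}]$ in the basis $\{B_{k,l}\}$, i.e.\ the elements $\widetilde{I}_{2n_1-2s}$, $\widetilde{T}_{-2m_1,\delta_{n_1}}$ and $\widetilde{O}_{n_1-s,-2m_1}$ of Proposition~\ref{prop:words}: each is a sum over an even number of basis elements, hence dies under $\xi$, leaving the constant contribution $-(2m_1+\varepsilon_{n_1})\equiv\overline{1}$. This bookkeeping (Lemmas~\ref{lem:word_t_w}--\ref{lem:word_o}, resting on the basis Theorem~\ref{th:basis_B} of the Appendix) is the bulk of the proof and is precisely what your sketch defers to ``careful bookkeeping''; with the two errors above uncorrected, the plan does not close.
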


\begin{proposition}\label{prop:tau_1_case_5}
If $\alpha_\#: \begin{cases}
(1,0) \mapsto(r_1,2)\\
(0,1) \mapsto(r_2,0),
\end{cases}$	 where $r_1, r_2 \in \mathbb{Z}$, $r_1 > 0$, and one of the following conditions holds:
\begin{enumerate}[(a)]
\item $r_{2}=0$. 
\item $r_2 \neq 0$ and $e(r_1) \leq e(r_2)$. 
\end{enumerate}
Then $\alpha$ does not have the Borsuk-Ulam property with respect to $\tau_1$.
\end{proposition}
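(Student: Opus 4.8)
The plan is to use Lemma~\ref{lem:algebra_tau_1}: since here $\alpha_\#(1,0)=(r_1,2)$ and $\alpha_\#(0,1)=(r_2,0)$, it suffices to produce $a,b\in P_2(\klein)$ satisfying conditions~(\ref{it:algebra_tau_1a})--(\ref{it:algebra_tau_1c}). Writing $a=(w_a;m,n)$ and $b=(w_b;r_2,0)$, so that~(\ref{it:algebra_tau_1c}) holds automatically, I would first translate the remaining conditions using Proposition~\ref{prop:presentation_p2} and the identity $(p_1)_\#(l_\sigma(w;0,0))=g(w)$ coming from~(\ref{eq:escrita_l_sigma_rho_g}). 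A direct computation in $\zsdz$ gives $(p_1)_\#(a\,l_\sigma(a))=(m,n)\,g(w_a)\,(m,n)$, so~(\ref{it:algebra_tau_1b}) prescribes $g(w_a)$ in terms of $(m,n)$; comparing $(p_1)_\#$ on the two sides of~(\ref{it:algebra_tau_1a}) forces $g(w_b)=(((-1)^n-1)r_2,\,0)$. After these $\zsdz$-components have been matched, the relation~(\ref{it:algebra_tau_1a}) reduces to a single equation in the free group $F(u,v)$ relating $w_a$, $\rho(w_b)$ and $\theta(r_2,0)(w_a)$.

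For case~(a), where $r_2=0$, I would simply take $b=(\id;0,0)$, so that $l_\sigma(b)=\id$ and~(\ref{it:algebra_tau_1a}) holds trivially, and $a=(u^{r_1}v^2;0,0)$. Then $(p_1)_\#(b)=(0,0)=(r_2,0)$ gives~(\ref{it:algebra_tau_1c}), while $(p_1)_\#(a\,l_\sigma(a))=(0,0)\,g(u^{r_1}v^2)=(r_1,2)$ gives~(\ref{it:algebra_tau_1b}).

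For case~(b), where $r_2\neq 0$ and $e(r_1)\le e(r_2)$, the main work begins. I would take $m=n=0$, which by the first paragraph requires $w_b\in\gsigma$ and $g(w_a)=(r_1,2)$; writing $w_a=u^{r_1}v^2x_a$ with $x_a\in\gsigma$ via Proposition~\ref{prop:normal_form}, the relation~(\ref{it:algebra_tau_1a}) becomes $w_a\,\rho(w_b)=w_b\,\theta(r_2,0)(w_a)$. Since $B\in\ker g$, both $\theta(r_2,0)$ and $\rho$ restrict to endomorphisms of $\gsigma$ (Proposition~\ref{prop:prop_gsigma}) and conjugation by $u^{r_1}v^2$ preserves $\gsigma$; setting $c=(u^{r_1}v^2)^{-1}\theta(r_2,0)(u^{r_1}v^2)\in\gsigma$, the equation can be rewritten entirely inside $\gsigma$ as $x_a\,\rho(w_b)=C(w_b)\,c\,\theta(r_2,0)(x_a)$, where $C=c_{-2,-r_1}$ denotes conjugation by $(u^{r_1}v^2)^{-1}$. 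Abelianising in $\gsigmab=\bigoplus_{k,l\in\z}\z[B_{k,l}]$, this takes the linear form
\begin{equation*}
(\mathrm{Id}-\theta(r_2,0)_\ab)(\overline{x_a})+(\rho_\ab-C_\ab)(\overline{w_b})=\overline{c},
\end{equation*}
and here the formulas~(\ref{eq:homo_gsigmab_theta})--(\ref{eq:homo_gsigmab_cpq}) make everything explicit: $\mathrm{Id}-\theta(r_2,0)_\ab$ sends $B_{k,l}$ to $0$ for $k$ even and to $B_{k,l}-B_{k,l-2r_2}$ for $k$ odd, $C_\ab$ shifts $B_{k,l}\mapsto B_{k-2,\,l-\varepsilon_k r_1}$, and $\rho_\ab$ reflects $k\mapsto -k$; the right-hand side $\overline{c}$ is computed from Proposition~\ref{prop:words} after noting that $(u^{r_1}v^2)^{-1}\theta(r_2,0)(u^{r_1}v^2)$ is built from $B_{0,-r_1}^{r_2}$, a word of type $J$, and a power of $B$. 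Solvability of this $\z$-linear system then comes down to a compatibility between the $2r_2$-periodicity introduced by $\theta(r_2,0)_\ab$ and the $r_1$-shift introduced by $C_\ab$, which I expect to hold precisely because $e(r_1)\le e(r_2)$, equivalently $e(r_1)<e(2r_2)$.

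The main obstacle is this last step: choosing $x_a$ and $w_b$ so that the braid relation~(\ref{it:algebra_tau_1a}) holds on the nose, not merely after Abelianisation. Two points require care. First, the bookkeeping of the $B_{k,l}$-coordinates must be carried out precisely enough to see that the inequality $e(r_1)\le e(r_2)$, rather than some coarser divisibility, is the exact solvability criterion; this is where the explicit decompositions of $\widetilde{T}_{k,r},\widetilde{I}_k,\widetilde{J}_{k,l}$ and $\widetilde{O}_{k,l}$ in Proposition~\ref{prop:words} are indispensable. Second, since Abelianisation only yields a necessary condition, the Abelianised solution must be promoted to a genuine solution in the free group $\gsigma$; I expect to do this by building $x_a$ and $w_b$ directly as products of the words $T_{k,r},I_k,O_{k,l},J_{k,l}$—which is precisely why these particular elements were isolated—so that~(\ref{it:algebra_tau_1a}) can finally be verified in $P_2(\klein)$ using Proposition~\ref{prop:presentation_p2} and Lemma~\ref{lem:homo_gsigma}.
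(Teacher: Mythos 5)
Your case~(a) is complete and correct: with $b=(\id;0,0)$ and $a=(u^{r_1}v^2;0,0)$, conditions~(\ref{it:algebra_tau_1a})--(\ref{it:algebra_tau_1c}) of Lemma~\ref{lem:algebra_tau_1} are immediate, and your preliminary bookkeeping (the formula $(p_1)_\#(a\,l_\sigma(a))=(m,n)\,g(w_a)\,(m,n)$, the constraint $g(w_b)=(((-1)^n-1)r_2,0)$, and the reduction of~(\ref{it:algebra_tau_1a}) to $x_a\,\rho(w_b)=C(w_b)\,c\,\theta(r_2,0)(x_a)$ in $\gsigma$) is all sound. But in case~(b) the proof stops exactly where it has to begin. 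You Abelianise and assert that solvability of the resulting $\z$-linear system ``should'' be equivalent to $e(r_1)\le e(r_2)$, and that a solution in $\gsigmab$ can then be ``promoted'' to one in $\gsigma$; neither claim is proved, and the second one is the entire difficulty. Abelianisation only yields a \emph{necessary} condition: it is the right tool for the positive direction (and indeed the paper uses homomorphisms out of $\gsigmab$ to derive contradictions in Propositions~\ref{prop:tau_1_case_2} and~\ref{prop:tau_1_case_6}), but for a non-existence-of-Borsuk-Ulam statement one must exhibit actual elements $a,b\in P_2(\klein)$ satisfying~(\ref{it:algebra_tau_1a}) on the nose. There is no general mechanism for lifting a solution of the Abelianised equation back to the free group $\gsigma$, and your ansatz fixes $(m,n)=(0,0)$, which removes the flexibility that would make such a lift tractable.

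The paper sidesteps this entirely by a different idea, which is the one your outline is missing: Corollary~\ref{cor:reduction_tau_1} permits changing the second coordinates of $\alpha_\#$ modulo $4$ and modulo $2$. Setting $o(r_1)=r_1/2^{e(r_1)}$ and $m=r_2/2^{e(r_1)}$ --- and this is precisely where the hypothesis $e(r_1)\le e(r_2)$ is used, to guarantee $m\in\z$ --- one replaces $\alpha$ by $\alpha'$ with $\alpha'_\#(1,0)=(r_1,2o(r_1))$ and $\alpha'_\#(0,1)=(r_2,2m)$, so that both targets become powers of the single element $(2^{e(r_1)},2)=(p_1)_\#(l_\sigma(c))$, where $c=(u^{2^{e(r_1)}}v^2;0,0)$. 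Taking $a=(c\sigma)^{o(r_1)}\sigma^{-1}$ and $b=(c\sigma)^{2m}$, condition~(\ref{it:algebra_tau_1a}) holds \emph{trivially} because powers of $c\sigma$ commute with one another, and conditions~(\ref{it:algebra_tau_1b}) and~(\ref{it:algebra_tau_1c}) follow by a two-line computation with $(p_1)_\#$. In other words, the solvability you were trying to extract from a delicate $B_{k,l}$-coordinate analysis is obtained structurally, with no equation in $\gsigma$ ever being solved. To repair your argument you would either have to import this reduction, or genuinely construct $x_a$ and $w_b$ as words in $T_{k,r}$, $I_k$, $O_{k,l}$, $J_{k,l}$ verifying the exact relation --- a substantial computation that your proposal only announces.
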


\begin{proposition}\label{prop:tau_1_case_6}
Let $\alpha \in [ \torus, * ; \klein,*]$ be a pointed homotopy class such that $\alpha_\#: \begin{cases}
(1,0) \mapsto(r_1,2)\\
(0,1) \mapsto(r_2,0),
\end{cases}$	 where $r_1, r_2 \in \mathbb{Z}$, $r_{1}\geq 0$, $r_2 \neq 0$, and one of the following conditions holds:
\begin{enumerate}[(a)]
\item $r_{1}=0$.
\item $r_{1}>0$, and $e(r_1) > e(r_2)$. 
\end{enumerate}
Then $\alpha$ has the Borsuk-Ulam property with respect to $\tau_1$.
\end{proposition}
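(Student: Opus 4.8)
The plan is to argue by contradiction through the algebraic criterion of Lemma~\ref{lem:algebra_tau_1}. Assume that $\alpha$ does \emph{not} have the Borsuk-Ulam property with respect to $\tau_1$; then there exist $a,b\in P_2(\klein)$ satisfying conditions~(\ref{it:algebra_tau_1a})--(\ref{it:algebra_tau_1c}), and the goal is to show that these three conditions are mutually incompatible whenever $r_2\neq 0$ and either $r_1=0$ or $e(r_1)>e(r_2)$. Writing $a=(w_a;m_a,n_a)$ and $b=(w_b;m_b,n_b)$ with $w_a,w_b\in F(u,v)$, the first step is to read off the constraints imposed at the level of $\zsdz$. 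By the last item of Proposition~\ref{prop:presentation_p2}, condition~(\ref{it:algebra_tau_1c}) forces $(m_b,n_b)=(r_2,0)$. For condition~(\ref{it:algebra_tau_1b}), I compute $(p_1)_\#(a\,l_\sigma(a))$ using~(\ref{eq:escrita_l_sigma_rho_g}) and the description of $l_\sigma$ on the $\zsdz$-factor; this gives $(m_a,n_a)\,g(w_a)\,(m_a,n_a)=(r_1,2)$ in $\zsdz$. Comparing second coordinates shows that the second coordinate of $g(w_a)$ is even, and comparing first coordinates then constrains $m_a$ and the first coordinate of $g(w_a)$ in terms of $r_1$ and the parity of $n_a$, so that I may reduce to a small number of cases according to whether $n_a$ is even or odd, the case $r_1=0$ being singled out as in part~(a) of the statement.

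The heart of the argument is condition~(\ref{it:algebra_tau_1a}), which I rewrite as $l_\sigma(b)=a^{-1}ba$ and expand in $P_2(\klein)=F(u,v)\rtimes_\theta(\zsdz)$ by means of Proposition~\ref{prop:presentation_p2}. Equating the $\zsdz$-components yields further numerical relations between $g(w_a)$, $g(w_b)$, $r_1$ and $r_2$, and guarantees that the $u^{r}v^{s}$-prefixes (in the sense of Proposition~\ref{prop:normal_form}) of the two $F(u,v)$-components coincide; cancelling these prefixes leaves an identity between elements of $\gsigma$. Since, by Proposition~\ref{prop:prop_gsigma}, $\gsigma$ is free on $\{(B_{k,l};0,0)\}_{k,l}$, I then pass to the abelianisation $\gsigmab=\bigoplus_{k,l}\z[B_{k,l}]$, where the identity becomes a $\z$-linear relation among the $B_{k,l}$.

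To make this relation explicit I substitute the abelianised actions~(\ref{eq:homo_gsigmab_theta})--(\ref{eq:homo_gsigmab_cpq}) together with the closed forms for $\widetilde{T}_{k,r}$, $\widetilde{I}_k$, $\widetilde{J}_{k,l}$ and $\widetilde{O}_{k,l}$ furnished by Proposition~\ref{prop:words}, after rewriting $w_a$ and $w_b$ via the normal form as products of a power of $u$, a power of $v$ and an element of $\gsigma$. One then has to track how the index shifts $k\mapsto k+p$ and $l\mapsto l+\varepsilon_k q$ coming from $c_{p,q}$, the sign change $k\mapsto -k$ coming from $\rho$, and the shift $l\mapsto \varepsilon_n l-2\delta_k m$ coming from $\theta$ act on the basis, and then compare the coefficients of a well-chosen family of $B_{k,l}$. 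The main obstacle is precisely this final comparison: the $u$-exponent $r_2$ of $b$ and the data of $a$ enter through these shifts and through the double products in $\widetilde{J}_{k,l}$ and $\widetilde{O}_{k,l}$, and one must isolate the coefficients that detect the $2$-adic valuations of $r_1$ and $r_2$. The hypothesis $r_2\neq 0$ with $e(r_1)>e(r_2)$ (or $r_1=0$) should make the resulting divisibility condition unsatisfiable, yielding the desired contradiction. This is the mirror image of Proposition~\ref{prop:tau_1_case_5}, where the opposite inequality $e(r_1)\leq e(r_2)$ instead permits one to \emph{solve} the system and exhibit $a$ and $b$ explicitly.
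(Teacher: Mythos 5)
Your reduction coincides, step for step, with the paper's own route: conditions~(\ref{it:algebra_tau_1a})--(\ref{it:algebra_tau_1c}) of Lemma~\ref{lem:algebra_tau_1}, the normal form of Proposition~\ref{prop:normal_form}, the elimination of the $u^rv^s$-prefixes, and abelianisation are exactly the content of Lemmas~\ref{lem:gencalc} and~\ref{lem:lema_BU_t2_k2_1_pont_3_a}, which culminate in the equation $\mu(x)+\nu(y)=\widetilde{J}_{n_{1}-1,-2r_{2}}-\widetilde{O}_{n_{1}-1,2\delta_{n_{1}}r_{2}}-\widetilde{T}_{2\delta_{n_{1}}r_{2},\delta_{n_{1}}}+\chi$ in $\gsigmab$, with $\mu,\nu$ as in~(\ref{eq:defmu})--(\ref{eq:defnu}). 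Up to this point your computation (including the formula $(m_a,n_a)\,g(w_a)\,(m_a,n_a)=(r_1,2)$, which gives $a_2=2(1-n_1)$ and $a_1=\varepsilon_{n_1}r_1-2\delta_{n_1+1}m_1$) is correct and faithful to the paper.

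The gap is the final step, which you yourself flag as ``the main obstacle'': \emph{comparing the coefficients of a well-chosen family of $B_{k,l}$} cannot work as literally stated, because $x$ and $y$ range over all of $\gsigmab$, so every individual coefficient on the left-hand side can be adjusted at will; unsolvability must instead be detected by a linear functional that annihilates $\im{\mu}+\im{\nu}$ but not the right-hand side. Moreover this functional must take values in $\ztwo$, not $\z$: for $k=-(n_1-1)$ the two terms of $\nu(B_{k,l})$ landing in column $n_1-1$ have coefficient sum $-2$, which vanishes only mod~$2$. The paper's key construction~(\ref{eq:defxi}) is the homomorphism $\xi_{n_1,r_2}\colon\gsigmab\to\ztwo$ with $\xi_{n_1,r_2}(B_{k,l})=\overline{1}$ precisely when $k=n_1-1$ and $2^{e(r_2)+1}\mid l$. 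It kills $\im{\mu}$ because the two $l$-indices in $\mu(B_{n_1-1,l})$ differ by $2r_2$, and it kills $\im{\nu}$ because the two relevant $l$-indices differ by $r_1$ and $2^{e(r_2)+1}\mid r_1$ --- this divisibility is exactly, and \emph{only}, where the hypothesis $r_1=0$ or $e(r_1)>e(r_2)$ enters, a mechanism your sketch never pins down (Lemma~\ref{lem:xi_mu_nu}). Finally, one must still prove that the right-hand side maps to $\overline{1}$: this rests on the counting fact of Remark~\ref{rem:divisible} that $2\lvert r_2\rvert$ consecutive integers contain exactly $o(r_2)$ --- an odd number --- of multiples of $2^{e(r_2)+1}$, together with a three-way case analysis on $n_1$ (even, equal to $1$, odd and $\neq 1$) deciding which of $\widetilde{J}_{n_1-1,-2r_2}$, $\widetilde{T}_{2\delta_{n_1}r_2,\delta_{n_1}}$, $\widetilde{O}_{n_1-1,2\delta_{n_1}r_2}$ contributes the $\overline{1}$. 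Without the functional, the mod-$2$ reduction, and this parity count, asserting that the divisibility condition ``should'' be unsatisfiable leaves the contradiction unestablished.
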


The following lemma will be used in the proofs of some of these propositions.

\begin{lemma}\label{lem:gencalc}
Let $a,b\in P_2(\klein)$. Then there exist $x,y \in \gsigma$ and $a_{i},b_{i},m_{i},n_{i}\in \z$, where $i\in \{1,2\}$, such that:
\begin{equation}\label{eq:abdecomp}
\text{$a =(u^{a_1} v^{a_2} x ; m_1,n_1)$ and $b =(u^{b_1} v^{b_2} y ; m_2,n_2)$.}
\end{equation}
Suppose further that $a$ and $b$ satisfy the relation of Lemma~\ref{lem:algebra_tau_1}(\ref{it:algebra_tau_1a}). Then:
\begin{equation}\label{eq:n1n2}
\text{$b_{2}=0$ and $(1+(-1)^{\delta_{n_{1}}+1})m_{2}= (1+(-1)^{\delta_{n_{2}}+1})m_{1}+(-1)^{\delta_{n_{1}}}b_{1}$},
\end{equation}
so $b_{1}$ is even, and:
\begin{multline}\label{eq:basiclsigma}
u^{b_{1}} y B^{m_2-\delta_{n_2}} u^{a_{1}\varepsilon_{n_{2}}} (B^{\delta_{n_{2}}}vu^{-2m_{2}})^{a_{2}} B^{\delta_{n_2}-m_2} \theta(m_2,\delta_{n_2})(x)=\\ u^{a_1} v^{a_2} x B^{m_1-\delta_{n_1}} (B^{\varepsilon_{n_{1}}} u^{-\varepsilon_{n_{1}}})^{b_{1}} B^{-\varepsilon_{n_{1}}b_{1}+\delta_{n_{1}}-m_{1}} \theta(m_{1}+\varepsilon_{n_{1}}b_{1}, \delta_{n_{1}})(\rho(y))B^{\delta_{n_{2}}\varepsilon_{n_{1}}}.
\end{multline}
\end{lemma}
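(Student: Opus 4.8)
The plan is to prove Lemma~\ref{lem:gencalc} by direct computation inside the semi-direct product $P_2(\klein)=F(u,v)\rtimes_\theta(\zsdz)$, exploiting the normal form of Proposition~\ref{prop:normal_form} together with the explicit formulas for $l_\sigma$ and $\theta$ given in Proposition~\ref{prop:presentation_p2}. First I would justify the decomposition~(\ref{eq:abdecomp}): writing $a=(w_a;m_1,n_1)$ and $b=(w_b;m_2,n_2)$ for the generic form of elements of $P_2(\klein)$, I apply Proposition~\ref{prop:normal_form} to the $F(u,v)$-coordinates $w_a,w_b$ to rewrite them as $u^{a_1}v^{a_2}x$ and $u^{b_1}v^{b_2}y$ with $x,y\in\gsigma$, where $(a_1,a_2)=g(w_a)$ and $(b_1,b_2)=g(w_b)$. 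This is immediate and requires no real work.

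The core of the argument is to expand both sides of the relation $a\,l_\sigma(b)=b\,a$ from Lemma~\ref{lem:algebra_tau_1}(\ref{it:algebra_tau_1a}). The strategy is to compute $l_\sigma(b)$ first: since $l_\sigma$ is a homomorphism, I factor $b=(u^{b_1};0,0)(v^{b_2};0,0)(x;0,0)(\id;m_2,n_2)$ — being careful that conjugation by $\sigma$ must be applied to each piece — and use the formulas $l_\sigma(u^r;0,0)$, $l_\sigma(v^s;0,0)$, $l_\sigma(\id;m,0)$, $l_\sigma(\id;0,n)$ of Proposition~\ref{prop:presentation_p2}, together with the commutative square of Proposition~\ref{prop:prop_gsigma} giving $l_\sigma(y;0,0)=(\rho(y);0,0)$ for $y\in\gsigma$. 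Then $a\,l_\sigma(b)$ and $b\,a$ are each assembled as a single element $(W;M,N)$ of the semi-direct product by repeatedly using the multiplication rule $(w;p)(w';p')=(w\,\theta(p)(w');p+p')$. Comparing the $\zsdz$-components $M=M'$ and $N=N'$ yields the scalar constraints; comparing the $F(u,v)$-components yields the word identity. The condition on the second coordinate $N=N'$, read off using $g$ and Remark~\ref{rem:theta_parity}, should force $b_2=0$ (since the $v$-exponents must balance and $l_\sigma$ sends the "$v^{b_2}$" contribution into a term with nonzero $\zsdz$-second-coordinate only when $b_2\neq0$), and the first-coordinate comparison gives the linear relation $(1+(-1)^{\delta_{n_1}+1})m_2=(1+(-1)^{\delta_{n_2}+1})m_1+(-1)^{\delta_{n_1}}b_1$ of~(\ref{eq:n1n2}); the parity of $b_1$ then follows because the left side is always even.

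Once $b_2=0$ is established, the $F(u,v)$-coordinate equation of $a\,l_\sigma(b)=b\,a$ becomes~(\ref{eq:basiclsigma}). I would obtain it by carrying out the two products explicitly: on the left, $a\,l_\sigma(b)$ contributes $u^{a_1}v^{a_2}x$ followed by $\theta(m_1,n_1)$ applied to the $F(u,v)$-coordinate of $l_\sigma(b)$ (here I use Remark~\ref{rem:theta_parity} to replace $\theta(m_1,n_1)$ by $\theta(m_1,\delta_{n_1})$, and similarly for $n_2$), picking up the $\rho(y)$ term and the various powers of $B$; on the right, $b\,a$ contributes $u^{b_1}y$ followed by $\theta(m_2,\delta_{n_2})(u^{a_1}v^{a_2}x)$. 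The main obstacle will be the bookkeeping: correctly tracking the powers of $B$, the sign factors $\varepsilon_{n_i}$ and $\delta_{n_i}$, and the conjugating factors $(B^{\varepsilon_{n_1}}u^{-\varepsilon_{n_1}})^{b_1}$ and $(B^{\delta_{n_2}}vu^{-2m_2})^{a_2}$ that emerge when $\theta$ is applied to $u^{a_1}$, $v^{a_2}$ and $B$-powers. I would organise the calculation by first recording the images $\theta(m,\delta_n)(u)$, $\theta(m,\delta_n)(v)$, $\theta(m,\delta_n)(B)$ and $\rho(u^{b_1})=(B^{\varepsilon_{n_1}}u^{-\varepsilon_{n_1}})$-type expressions as a short table, then substituting mechanically, so that~(\ref{eq:basiclsigma}) appears after collecting terms. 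No genuine conceptual difficulty arises beyond this careful manipulation, so the proof reduces to a disciplined expansion followed by matching the two sides coordinate-wise in the semi-direct product.
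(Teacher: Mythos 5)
Your proposal is correct and follows essentially the same route as the paper's proof: the paper likewise applies Proposition~\ref{prop:normal_form} to the $F(u,v)$-coordinates, factors $b$ as $(u^{b_1};0,0)(v^{b_2};0,0)(y;0,0)(\id;m_2,0)(\id;0,n_2)$ so as to compute $l_\sigma(b)$ from the formulas of Proposition~\ref{prop:presentation_p2} together with the commutative square of Proposition~\ref{prop:prop_gsigma}, then compares the $\zsdz$-coordinates of $ba$ and $a\,l_\sigma(b)$ (yielding $b_2=0$, the linear relation~(\ref{eq:n1n2}), and hence the parity of $b_1$) before matching the $F(u,v)$-coordinates to obtain~(\ref{eq:basiclsigma}). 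The only slips are cosmetic: in your factorisation of $b$ the third factor should be $(y;0,0)$ rather than $(x;0,0)$, and the cleanest justification of $b_2=0$ is simply that the second coordinates of $ba$ and $a\,l_\sigma(b)$ are $n_2+n_1$ and $n_1+b_2+n_2$ respectively.
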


\begin{proof}
Let $a,b\in P_2(\klein)$. Proposition~\ref{prop:normal_form} implies that there exist $x,y \in \gsigma$ and $a_{i},b_{i},m_{i},n_{i}\in \z$, where $i\in \{1,2\}$, for which~(\ref{eq:abdecomp}) holds. First, we have:
\begin{align}
ba&= (u^{b_1} v^{b_2} y ; m_2,n_2) (u^{a_1} v^{a_2} x ; m_1,n_1)\notag\\
& = (u^{b_1} v^{b_2} y \theta(m_2,\delta_{n_2})(u^{a_1} v^{a_2} x); m_{2}+(-1)^{\delta_{n_{2}}}m_{1}, n_{2}+n_{1}).\label{eq:ba}
\end{align}
Now:
\begin{align}
l_{\sigma}(b)=& ((Bu^{-1})^{b_{1}}B^{-b_{1}}; b_{1},0)((uv)^{-b_{2}} (uB)^{\delta_{b_{2}}};0,b_{2}) (\rho(y); 0,0) (\id;m_{2},0) (B^{\delta_{n_{2}}};0,n_{2})\notag\\
=& ((Bu^{-1})^{b_{1}}B^{-b_{1}}; b_{1},0)((uv)^{-b_{2}} (uB)^{\delta_{b_{2}}};0,b_{2})(\rho(y)B^{\delta_{n_{2}}}; m_{2},n_{2})\notag\\
=&((Bu^{-1})^{b_{1}}B^{-b_{1}} \theta(b_{1},0)((uv)^{-b_{2}} (uB)^{\delta_{b_{2}}}); b_{1},b_{2})(\rho(y)B^{\delta_{n_{2}}}; m_{2},n_{2})\notag\\
=&((Bu^{-1})^{b_{1}}B^{-b_{1}} \theta(b_{1},0)((uv)^{-b_{2}} (uB)^{\delta_{b_{2}}}) \theta(b_{1},\delta_{b_{2}})(\rho(y)B^{\delta_{n_{2}}}); \notag\\
& b_{1}+(-1)^{\delta_{b_{2}}}m_{2}, b_{2}+n_{2}).\label{eq:lsigb}
\end{align}
Thus:
\begin{align}
(p_{1})_{\#}(al_{\sigma}(b))&=(m_1,n_1) (b_{1}+(-1)^{\delta_{b_{2}}}m_{2}, b_2+n_{2})\notag\\
&= (m_{1}+(-1)^{\delta_{n_{1}}}b_{1}+(-1)^{\delta_{n_{1}}+\delta_{b_{2}}}m_{2}, n_{1}+b_{2}+n_{2}).\label{eq:galb}
\end{align}
Suppose that $a$ and $b$ satisfy the relation of~Lemma~\ref{lem:algebra_tau_1}(\ref{it:algebra_tau_1a}). It follows from~(\ref{eq:ba}) and (\ref{eq:galb}) that $b_{2}=0$, and then that $m_{2}+(-1)^{\delta_{n_{2}}}m_{1}= m_{1}+(-1)^{\delta_{n_{1}}}b_{1}+(-1)^{\delta_{n_{1}}}m_{2}$, which yields~(\ref{eq:n1n2}). This implies that $b_{1}$ is even. We now expand and simplify the remaining parts of~(\ref{eq:ba}) and~(\ref{eq:lsigb}):
\begin{align}\label{eq:projba}
p_{F}(ba)&= u^{b_{1}} y \theta(m_2,\delta_{n_2})(u^{a_1} v^{a_2} x)\notag\\
&= u^{b_{1}} y B^{m_2-\delta_{n_2}} u^{a_{1}\varepsilon_{n_{2}}} (B^{\delta_{n_{2}}}vu^{-2m_{2}})^{a_{2}} B^{\delta_{n_2}-m_2}\theta(m_2,\delta_{n_2})(x),
\end{align}
and
\begin{align*}
p_{F}(l_{\sigma}(b))&= (Bu^{-1})^{b_{1}} B^{-b_{1}} \theta(b_{1},0)(\rho(y)B^{\delta_{n_{2}}})= (Bu^{-1})^{b_{1}} B^{-b_{1}} \theta(b_{1},0)(\rho(y))B^{\delta_{n_{2}}}.
\end{align*}
Hence:
\begin{align}
p_{F}(al_{\sigma}(b))=& u^{a_1} v^{a_2} x \theta(m_1,\delta_{n_1})((Bu^{-1})^{b_{1}} B^{-b_{1}} \theta(b_{1},0)(\rho(y)) B^{\delta_{n_{2}}})\notag\\
=& u^{a_1} v^{a_2} x B^{m_1-\delta_{n_1}} (B^{\varepsilon_{n_{1}}} u^{-\varepsilon_{n_{1}}})^{b_{1}} B^{-\varepsilon_{n_{1}}b_{1}+\delta_{n_{1}}-m_{1}} \theta(m_{1}+\varepsilon_{n_{1}}b_{1}, \delta_{n_{1}})(\rho(y)) B^{\delta_{n_{2}}\varepsilon_{n_{1}}}.\label{eq:projalsigma}
\end{align}
Equation~(\ref{eq:basiclsigma}) then follows from the hypothesis, and equations~(\ref{eq:projba}) and~(\ref{eq:projalsigma}).
\end{proof}

\begin{proof}[Proof of Proposition~\ref{prop:tau_1_cases_134}]
Let $\alpha \in [ \torus, * ; \klein,*]$ be a pointed homotopy class such that $\alpha_\#(1,0) =(\rho,\gamma)$ and $\alpha_\#(0,1) =(\xi,\tau)$, where $(\rho,\gamma,\xi,\tau)\in \Sigma$. Note that either $\rho=\xi=0$ or $\gamma=\tau=0$. In what follows, we will use the identities $\delta_{\delta_{q}}=\delta_{q}$ and $\varepsilon_{\delta_{q}}=\varepsilon_{q}$ for all $q\in \z$. Let $a,b\in P_{2}(\klein)$ be such that 
$a=(u^{\delta_{\rho}} v^{\delta_{\gamma}} B^{\delta_{\gamma}(\gamma-\delta_{\gamma})/2}; (\rho-\delta_{\rho})/2, (\gamma-\delta_{\gamma})/2)$ and $b=(B^{-\delta_{\rho}\xi}; \xi,\tau)$. With respect to the notation of~(\ref{eq:abdecomp}), $x=B^{\delta_{\gamma}(\gamma-\delta_{\gamma})/2}$ and $y=B^{-\delta_{\rho}\xi}$. To prove the result, we show that conditions~(\ref{it:algebra_tau_1a})--(\ref{it:algebra_tau_1c}) of Lemma~\ref{lem:algebra_tau_1} are satisfied. Clearly, $(p_{1})_{\#}(b)=(\xi,\tau)=\alpha_{\#}(0,1)$. Further, by taking $b=a$ in~(\ref{eq:galb}), we have:
\begin{equation}\label{eq:ala134}
(p_{1})_{\#}(a l_{\sigma}(a))= \left(\bigl(1+(-1)^{\delta_{(\gamma-\delta_{\gamma})/2}+\delta_{\gamma}}\bigr)\frac{(\rho-\delta_{\rho})}{2} + \varepsilon_{(\gamma-\delta_{\gamma})/2} \delta_{\rho}, \gamma \right). 
\end{equation}
If $\rho=\xi=0$ then the first coordinate of the right-hand side of~(\ref{eq:ala134}) is equal to zero, while if $\gamma=\tau=0$, this coordinate is equal to $\rho$. In both cases, we conclude that $(p_{1})_{\#}(a l_{\sigma}(a))=(\rho,\gamma)=\alpha_{\#}(1,0)$. Hence conditions~(\ref{it:algebra_tau_1b}) and~(\ref{it:algebra_tau_1c}) of Lemma~\ref{lem:algebra_tau_1} are satisfied. It remains to check condition~(\ref{it:algebra_tau_1a}). Note that in the proof of Lemma~\ref{lem:gencalc}, the only condition that we have applied to obtain equations~(\ref{eq:projba}) and~(\ref{eq:projalsigma}) is that $b_{2}=0$. But this coefficient is zero in our case, and so these equations are also satisfied here. So using~(\ref{eq:ba}),~(\ref{eq:projba}) and Proposition~\ref{prop:presentation_p2}, we see that:
\begin{align}
ba=& \left(B^{-\delta_{\rho}\xi} \theta(\xi,\delta_{\tau})(u^{\delta_{\rho}} v^{\delta_{\gamma}} B^{\delta_{\gamma}(\gamma-\delta_{\gamma})/2}); \xi+\varepsilon_{\tau}\frac{(\rho-\delta_{\rho})}{2}, \tau+\frac{(\gamma-\delta_{\gamma})}{2}\right)\notag\\
=&\left(B^{-\delta_{\rho}\xi} B^{\xi-\delta_{\tau}}u^{\delta_{\rho}\varepsilon_{\tau}} (B^{\delta_{\tau}}vu^{-2\xi})^{\delta_{\gamma}} B^{\delta_{\tau}-\xi} B^{\varepsilon_{\tau} \delta_{\gamma}(\gamma-\delta_{\gamma})/2}; \xi+\varepsilon_{\tau} \frac{(\rho-\delta_{\rho})}{2}, \tau+\frac{(\gamma-\delta_{\gamma})}{2}\right).
\end{align}
In a similar manner, by~(\ref{eq:galb}) and~(\ref{eq:projalsigma}), we see that:
\begin{align}
al_{\sigma}(b)=& \left( u^{\delta_{\rho}} v^{\delta_{\gamma}} B^{\delta_{\gamma}(\gamma-\delta_{\gamma})/2+(\delta_{\tau}-\delta_{\rho}\xi)\varepsilon_{(\gamma-\delta_{\gamma})/2}}; \frac{(\rho-\delta_{\rho})}{2}+\varepsilon_{(\gamma-\delta_{\gamma})/2}\xi, \tau+\frac{(\gamma-\delta_{\gamma})}{2}\right).
\end{align}
If $\rho=\xi=0$ (resp.\ $\gamma=\tau=0$) then $(p_{1})_{\#}(ba)=(0,\tau+(\gamma-\delta_{\gamma})/2)=(p_{1})_{\#}(al_{\sigma}(b))$ (resp.\ $(p_{1})_{\#}(ba)=(\xi+(\rho-\delta_{\rho})/2,0)=(p_{1})_{\#}(al_{\sigma}(b))$). So it remains to show that $p_{F}(ba)=p_{F}(al_{\sigma}(b))$. 

\begin{enumerate}[(a)]
\item If $\rho=\xi=0$, $p_{F}(ba)= B^{-\delta_{\tau}} (B^{\delta_{\tau}}v)^{\delta_{\gamma}} B^{\delta_{\tau}} B^{\varepsilon_{\tau} \delta_{\gamma}(\gamma-\delta_{\gamma})/2}$ and $p_{F}(al_{\sigma}(b))= v^{\delta_{\gamma}} B^{\delta_{\gamma}(\gamma-\delta_{\gamma})/2+\delta_{\tau}\varepsilon_{(\gamma-\delta_{\gamma})/2}}$. 
\begin{enumerate}[(i)]
\item Suppose that $\delta_{\gamma}=0$. Since $(\rho,\gamma,\xi,\tau)\in \Sigma$, it follows that $\delta_{\tau}=0$, and thus $p_{F}(ba)=p_{F}(al_{\sigma}(b))=\id$.

\item Suppose that $\delta_{\gamma}=1$. Then $p_{F}(ba)= v B^{\delta_{\tau}+\varepsilon_{\tau} (\gamma-1)/2}$ and $p_{F}(al_{\sigma}(b))= v B^{(\gamma-1)/2+\delta_{\tau}\varepsilon_{(\gamma-1)/2}}$. 

If $\delta_{\tau}=0$ then $p_{F}(ba)= v B^{(\gamma-1)/2}= p_{F}(al_{\sigma}(b))$. So suppose that $\delta_{\tau}=1$. Since $(\rho,\gamma,\xi,\tau)\in \Sigma$, it follows that $(\gamma-1)/2\in \{ 0,1\}$, and one can check easily that $1-(\gamma-1)/2=(\gamma-1)/2+\varepsilon_{(\gamma-1)/2}$. It follows that $p_{F}(ba)=p_{F}(al_{\sigma}(b))$.
\end{enumerate}

\item If $\gamma=\tau=0$ then $p_{F}(ba)=B^{(1-\delta_{\rho})\xi} u^{\delta_{\rho}} B^{-\xi}$ and $p_{F}(al_{\sigma}(b))=u^{\delta_{\rho}}  B^{-\delta_{\rho}\xi}$, and one sees that $p_{F}(ba)=\id=p_{F}(al_{\sigma}(b))$ if $\delta_{\rho}=0$, and $p_{F}(ba)=uB^{-\xi}=p_{F}(al_{\sigma}(b))$ if $\delta_{\rho}=1$.\qed
\end{enumerate}
\end{proof}

In order to prove Proposition~\ref{prop:tau_1_case_2}, we will make use of the following lemma.

\begin{lemma}\label{lem:aux_1}
Let $\alpha \in [\torus,* ; \klein,*]$ be such that 
$\alpha_\#: \begin{cases}
(1,0) \mapsto(0,2s)\\
(0,1) \mapsto(0,1)
\end{cases}$
for some $s \in \{ 0,1 \}$. If $\alpha$ does not have the Borsuk-Ulam property with respect to $\tau_1$ then there exist $x,y \in \gsigmab$ and $m_1,n_1 \in \z$ that satisfy  the following equation in $\gsigmab$:
\begin{align}\label{eq:newabeq}
\mu_{1}(x)+\mu_{2}(y)=&\widetilde{I}_{2n_1-2s}-\widetilde{T}_{-2m_1,\delta_{n_1}}-\widetilde{O}_{n_1-s,-2m_1} -(m_1+\delta_{n_1}+\varepsilon_{n_1}) B_{0,0} -(m_1-\delta_{n_1}) B_{0,-2m_1} \notag \\
& -B_{2n_1-2s,-2\delta_{n_{1}+1}m_1} +B_{2n_1-2s,0},
\end{align}
where $\mu_1,\mu_2 : \gsigmab \to \gsigmab$ are the homomorphisms defined on the basis elements of $\gsigmab$ by:
\begin{align*}
\mu_1(B_{k,l})=&B_{k,l-2\varepsilon_{k}m_1}+B_{k,-l} \text{ and } \\
\mu_2(B_{k,l})=&\varepsilon_{k} \varepsilon_{\delta_{n_1}} B_{-k,\varepsilon_{\delta_{n_1}}\varepsilon_{k+1}l+2\delta_{k}m_1}-B_{k+2n_1-2s,l-2\varepsilon_{k}\delta_{n_{1}+1}m_1}.
\end{align*}
\end{lemma}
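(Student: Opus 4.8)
The plan is to translate the failure of the Borsuk--Ulam property into the abelian identity~(\ref{eq:newabeq}) by combining the algebraic criterion with the structural results on $\gsigma$. First I would assume that $\alpha$ does \emph{not} have the Borsuk--Ulam property with respect to $\tau_1$, so that by Lemma~\ref{lem:algebra_tau_1} there exist $a,b\in P_2(\klein)$ satisfying conditions~(\ref{it:algebra_tau_1a})--(\ref{it:algebra_tau_1c}). Writing $a$ and $b$ in the normal form~(\ref{eq:abdecomp}) of Lemma~\ref{lem:gencalc}, relation~(\ref{it:algebra_tau_1a}) yields $b_2=0$, the constraint~(\ref{eq:n1n2}), and the word identity~(\ref{eq:basiclsigma}) in $F(u,v)$. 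This identity, together with the remaining two conditions, is the raw material from which~(\ref{eq:newabeq}) must be extracted.

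The next step is to pin down all the integer parameters. From condition~(\ref{it:algebra_tau_1c}) we have $(p_1)_\#(b)=(m_2,n_2)=(0,1)$, so $m_2=0$ and $n_2=1$, whence $\delta_{n_2}=1$ and $\varepsilon_{n_2}=-1$. Taking $b=a$ in~(\ref{eq:galb}) and imposing $(p_1)_\#(a l_\sigma(a))=(0,2s)$ from condition~(\ref{it:algebra_tau_1b}) forces $a_2=2(s-n_1)$, so that $2n_1-2s=-a_2$; the first coordinate then gives $a_1=-(1+\varepsilon_{n_1})m_1$, while feeding $m_2=0$, $n_2=1$ into~(\ref{eq:n1n2}) gives $b_1=-2\varepsilon_{n_1}m_1$ (in particular $b_1$ is even and $m_1+\varepsilon_{n_1}b_1=-m_1$). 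Substituting these values into~(\ref{eq:basiclsigma}) leaves an identity in $F(u,v)$ whose only unknowns are $x,y\in\gsigma$ and the integers $m_1,n_1$.

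The heart of the argument is then to rewrite this identity as an equation between two elements of $\gsigma$ and to abelianise it. I would pull all powers of $u$ and $v$ to the front of each side, using $(B v)^{a_2}=B v^{a_2} I_{-a_2} B^{-1}$ to extract $I_{-a_2}=I_{2n_1-2s}$ on the left, and $(B^{\varepsilon_{n_1}}u^{-\varepsilon_{n_1}})^{b_1}=u^{2m_1}T_{-2m_1,\delta_{n_1}}$ to extract $T_{-2m_1,\delta_{n_1}}$ on the right; the commutator $O_{n_1-s,-2m_1}=[v^{-a_2},u^{-2m_1}]$ appears when $v^{a_2}$ is commuted past a power of $u$, and each time a factor lying in $\gsigma$ is conjugated by a word $v^p u^q$ one records a map $c_{p,q}$ from~(\ref{eq:defi_conjugacao}). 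Since both sides of~(\ref{eq:basiclsigma}) represent the same element $p_F(ba)=p_F(a l_\sigma(b))$ of $F(u,v)$, their leftover $u^{R}v^{S}$ prefixes coincide and cancel, yielding an honest identity in $\gsigma$.

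Finally I would project this identity to $\gsigmab$. Abelianisation annihilates every inner conjugation, so the correction factors $\gamma,\lambda,\eta$ of Lemma~\ref{lem:homo_gsigma} vanish and the special words contribute exactly $\widetilde{I}_{2n_1-2s}$, $\widetilde{T}_{-2m_1,\delta_{n_1}}$ and $\widetilde{O}_{n_1-s,-2m_1}$ through Proposition~\ref{prop:words}, while the residual single generators assemble into the $B_{0,0}$, $B_{0,-2m_1}$ and $B_{2n_1-2s,\ast}$ terms on the right of~(\ref{eq:newabeq}). The dependence on $x$ and $y$ becomes linear: the factor $\theta(0,1)(x)$ on the left, combined with the abelianised conjugation $(c_{0,-2m_1})_\ab$ acting on the copy of $x$ on the right, produces $\mu_1$ via~(\ref{eq:homo_gsigmab_theta}) and~(\ref{eq:homo_gsigmab_cpq}); likewise $\theta(-m_1,\delta_{n_1})(\rho(y))$ together with the conjugated copy of $y$ produces $\mu_2$ via~(\ref{eq:homo_gsigmab_theta}), (\ref{eq:homo_gsigmab_rho}) and~(\ref{eq:homo_gsigmab_cpq}). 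Collecting terms gives precisely~(\ref{eq:newabeq}). I expect the main obstacle to be the bookkeeping in the third paragraph: carrying all the conjugating prefixes through~(\ref{eq:basiclsigma}), correctly identifying each block as one of $I_{k}$, $T_{k,r}$, $O_{k,l}$ or a generator $B_{k,l}$, and verifying that the leftover $u,v$-prefixes really cancel, all while handling the parity of $n_1$ (through $\delta_{n_1}$, $\varepsilon_{n_1}$ and $\delta_{n_1+1}$) uniformly.
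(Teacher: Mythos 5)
Your proposal is correct and follows essentially the same route as the paper's proof: the same parameter determinations ($a_2=2s-2n_1$, $a_1=-2\delta_{n_1+1}m_1$, $b_1=-2\varepsilon_{n_1}m_1$), the same extraction of $I_{2n_1-2s}$, $T_{-2m_1,\delta_{n_1}}$ and $O_{n_1-s,-2m_1}$ from~(\ref{eq:basiclsigma}) using the conjugations $c_{p,q}$, and the same abelianisation step assembling $\mu_1$ and $\mu_2$ from~(\ref{eq:homo_gsigmab_theta})--(\ref{eq:homo_gsigmab_cpq}). The explicit identities you quote for the rewriting (such as $(Bv)^{a_2}=Bv^{a_2}I_{-a_2}B^{-1}$ and $(B^{\varepsilon_{n_1}}u^{-\varepsilon_{n_1}})^{b_1}=u^{2m_1}T_{-2m_1,\delta_{n_1}}$) are precisely the manipulations carried out in the paper's computation of $p_F(ba)$ and $p_F(al_\sigma(b))$.
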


\begin{proof}
By hypothesis, there exist $a,b\in P_2(\klein)$ satisfying~(\ref{it:algebra_tau_1a})--(\ref{it:algebra_tau_1c}) of Lemma~\ref{lem:algebra_tau_1}.  Proposition~\ref{prop:normal_form} implies that $a$ and $b$ may be written in the form~(\ref{eq:abdecomp}). Lemma~\ref{lem:algebra_tau_1}(\ref{it:algebra_tau_1c}) implies that $(p_1)_\#(b)  =(0,1)$, so $(m_2,n_2)=(0,1)$. Using Lemma~\ref{lem:algebra_tau_1}(\ref{it:algebra_tau_1b}) and taking $b=a$ in~(\ref{eq:lsigb}), we obtain:
\begin{align*}
(0,2s) &=(p_1)_\#(a l_\sigma(a))=(m_1,n_1)(a_1+(-1)^{a_2} m_1,n_1+a_2)\\
&=(m_1+(-1)^{n_1} a_1+(-1)^{n_1+a_2} m_1, 2n_1+a_2).
\end{align*}
It follows that $a_2=2s-2n_1$ is even and $a_1= -2\delta_{n_{1}+1}m_1$.  
Lemmas~\ref{lem:algebra_tau_1}(\ref{it:algebra_tau_1a}) and~\ref{lem:gencalc} then imply that $b_2=0$ and $b_{1}=-2\varepsilon_{n_{1}}m_{1}$. We now analyse~(\ref{eq:basiclsigma}), which holds because Lemma~\ref{lem:algebra_tau_1}(\ref{it:algebra_tau_1a}) does. 
The left-hand side of~(\ref{eq:basiclsigma}) is equal to $p_{F}(ba)$, and may be rewritten as: 
\begin{align}
p_{F}(ba)=& u^{b_{1}} y B^{-1} u^{-a_{1}} (B v)^{2s-2n_{1}} B \theta(0,1)(x)=u^{b_{1}} y B^{-1} u^{-a_{1}} B (vB)^{2s-2n_{1}}  \theta(0,1)(x)\notag\\
=& u^{b_{1}-a_{1}} v^{2s-2n_{1}} \ldotp v^{2n_{1}-2s} u^{a_{1}}  y u^{-a_{1}} v^{2s-2n_{1}} \ldotp v^{2n_{1}-2s} u^{a_{1}} B^{-1} u^{-a_{1}} v^{2s-2n_{1}} \ldotp \notag\\
& v^{2n_{1}-2s} B v^{2s-2n_{1}} \ldotp v^{2n_{1}-2s} (vB)^{2s-2n_{1}}  \theta(0,1)(x)\notag\\
=& u^{b_{1}-a_{1}} v^{2s-2n_{1}} c_{2n_{1}-2s,a_{1}}(y) B_{2n_{1}-2s,a_{1}}^{-1} B_{2n_{1}-2s,0} I_{2n_{1}-2s}\theta(0,1)(x),\label{eq:pfba}
\end{align}
and the right-hand side of~(\ref{eq:basiclsigma}) is equal to $p_{F}(al_{\sigma}(b))$, and may be rewritten as:
\begin{align}
p_{F}(al_{\sigma}(b))=& u^{a_1} v^{2s-2n_{1}} x B^{m_1-\delta_{n_1}} (B^{\varepsilon_{n_{1}}} u^{-\varepsilon_{n_{1}}})^{b_{1}} B^{m_{1}+\delta_{n_{1}}} \theta(-m_{1}, \delta_{n_{1}})(\rho(y))B^{\varepsilon_{n_{1}}}\notag\\
=& u^{2m_{1}+a_{1}} v^{2s-2n_{1}} [v^{2n_{1}-2s}, u^{-2m_{1}}] c_{0,-2m_{1}}(x) B_{0,-2m_{1}}^{m_1-\delta_{n_1}} \ldotp\notag\\
& u^{-2m_{1}} (B^{\varepsilon_{n_{1}}} u^{-\varepsilon_{n_{1}}})^{-2\varepsilon_{n_{1}}m_{1}} B_{0,0}^{m_{1}+\delta_{n_{1}}} \theta(-m_{1}, \delta_{n_{1}})(\rho(y))B_{0,0}^{\varepsilon_{n_{1}}}\notag\\
=& u^{2m_{1}+a_{1}} v^{2s-2n_{1}} O_{n_{1}-s,-2m_{1}} c_{0,-2m_{1}}(x) B_{0,-2m_{1}}^{m_1-\delta_{n_1}} T_{-2m_{1},\delta_{n_{1}}} B_{0,0}^{m_{1}+\delta_{n_{1}}}\ldotp\notag\\
& \theta(-m_{1}, \delta_{n_{1}})(\rho(y))B_{0,0}^{\varepsilon_{n_{1}}},\label{eq:pfasigmab}
\end{align}
Since $p_{F}(ba)=p_{F}(al_{\sigma}(b))$, and using the fact that $b_{1}-a_{1}=2m_{1}+a_{1}$, it follows by Abelianising~(\ref{eq:pfba}) and~(\ref{eq:pfasigmab}) that:
\begin{multline*}
 (c_{0,-2m_{1}})_{\ab}(x)-\theta(0,1)_{\ab}(x) +\theta(-m_{1}, \delta_{n_{1}})_{\ab} \circ \rho_{\ab}(y)-(c_{2n_{1}-2s,a_{1}})_{\ab}(y) = \\
 \widetilde{I}_{2n_{1}-2s}-\widetilde{T}_{-2m_{1},\delta_{n_{1}}}-\widetilde{O}_{n_{1}-s,-2m_{1}}-(m_{1}+\delta_{n_{1}}+\varepsilon_{n_{1}}) B_{0,0} -(m_1-\delta_{n_1}) B_{0,-2m_{1}} -B_{2n_{1}-2s,a_{1}} +B_{2n_{1}-2s,0}
\end{multline*}
in $\gsigmab$, where the projection of $x$ (resp.\ $y$) in $\gsigmab$ is also denoted by $x$ (resp.\ $y$). By~(\ref{eq:homo_gsigmab_theta})--(\ref{eq:homo_gsigmab_cpq}), for all $k,l\in \z$, we have:
\begin{equation*}
(c_{0,-2m_{1}} )_\ab(B_{k,l})- \theta(0,1)_\ab(B_{k,l})= B_{k,l-2\varepsilon_{k}m_{1}}+B_{k,-l}=\mu_1(B_{k,l})
\end{equation*}
and
\begin{align*}
\theta(-m_{1}, \delta_{n_{1}})_{\ab} \circ \rho_{\ab}(B_{k,l})-(c_{2n_{1}-2s,a_{1}})_{\ab}(B_{k,l}) &= 
\theta(-m_{1}, \delta_{n_{1}})_{\ab}(\varepsilon_{k}B_{-k,\varepsilon_{k+1}l})-B_{k+2n_{1}-2s,l+\varepsilon_{k}a_{1}}\\
&=\varepsilon_{k} \varepsilon_{\delta_{n_{1}}} B_{-k,\varepsilon_{\delta_{n_{1}}}\varepsilon_{k+1}l+2\delta_{k}m_{1}}-B_{k+2n_{1}-2s,l+\varepsilon_{k}a_{1}}\\
&=\mu_{2}(B_{k,l}).
\end{align*}
The result follows by noting that $a_1 = -2\delta_{n_{1}+1}m_1$.
\end{proof}

We are now able to complete the proof of Proposition~\ref{prop:tau_1_case_2}. 

\begin{proof}[Proof of Proposition~\ref{prop:tau_1_case_2}]
We argue by contradiction. Suppose that $\alpha$ does not have the Borsuk-Ulam property with respect to $\tau_1$. Then there exist $x,y\in \gsigmab$ that satisfy equation~(\ref{eq:newabeq}) given in the statement of Lemma~\ref{lem:aux_1}. Let $\xi : \gsigmab \to \ztwo$ be the homomorphism defined on the basis $\{ B_{k,l}\}_{k,l\in \z}$ of $\gsigmab$ by $\xi(B_{k,l})= \overline{1}$ for all $k,l\in \z$. From the definition of the maps $\mu_{1}$ and $\mu_{2}$, it follows that the left-hand side is sent to $\overline{0}$. By Proposition~\ref{prop:words}, $\xi(\widetilde{I}_{2n_{1}-2s})= \xi(\widetilde{T}_{-2m_{1},\delta_{n_{1}}})= \xi(\widetilde{O}_{n_{1}-s,-2m_{1}}) =\overline{0}$, and it follows that the right-hand side is sent to $\overline{\varepsilon}_{n_{1}}$, which is different from $\overline{0}$. This yields a contradiction. We conclude that $\alpha$ has the Borsuk-Ulam property with respect to $\tau_1$.
\end{proof}

\begin{proof}[Proof of Proposition~\ref{prop:tau_1_case_5}]
Let $\alpha \in [ \torus, * ; \klein,*]$ be a pointed homotopy class such that $\alpha_\#(1,0)=(r_1,2)$ and $\alpha_\#(0,1)=(r_2,0)$, where $r_1, r_2 \in \mathbb{Z}$ are such that $r_1 > 0$, and either $r_{2}=0$, or $r_2 \neq 0$ and $e(r_1) \leq e(r_2)$. Let $o(r_{1})=r_{1}/2^{e(r_{1})}$, and let $m=r_{2}/2^{e(r_{1})}$. Then $o(r_{1})>0$ is odd, and $m\in \z$ by hypothesis. Let $\alpha' \in [\torus,* ; \klein,*]$ be the homotopy class for which $\alpha'_\#(1,0)=(r_1,2 o(r_1))$ and $\alpha'_\#(0,1)=(r_2,2m)$. By Corollary~\ref{cor:reduction_tau_1}, to prove the result, it suffices to exhibit $a,b\in P_{2}(\klein)$ that satisfy conditions~(\ref{it:algebra_tau_1a})--(\ref{it:algebra_tau_1c}) of Lemma~\ref{lem:algebra_tau_1} for $\alpha'$. Let $c =(u^{2^{e(r_1)}} v^2; 0,0) \in P_2(\klein)$, and let $a=(c \sigma)^{o(r_1)} \sigma^{-1}$ and $b=(c \sigma)^{2m}$. Then $a,b \in P_2 (\klein)$, and by Proposition~\ref{prop:presentation_p2}, we see that $(p_1)_\#(l_\sigma(c))=(2^{e(r_1)},2)$. Now $a l_\sigma(b)=(c \sigma)^{o(r_1)} \sigma^{-1} \sigma (c \sigma)^{2m} \sigma^{-1}=(c \sigma)^{2m} (c \sigma)^{o(r_1)} \sigma^{-1}=ba$, so condition~(\ref{it:algebra_tau_1a}) of Lemma~\ref{lem:algebra_tau_1} is satisfied. Next, $a =((c \sigma )^2 )^{(o(r_1)-1)/2} c =(c l_\sigma (c) \sigma^2  )^{(o(r_1)-1)/2} c$, hence $l_\sigma(a) =(l_\sigma(c)\sigma^2 c)^{(o(r_1)-1)/2} l_\sigma(c)$, and since $(p_1)_\#(c)=(p_1)_\#(\sigma^2)=(0,0)$, it follows that $(p_1)_\#(a l_\sigma(a))=((p_1)_\#(l_\sigma(c)))^{o(r_1)}= (2^{e(r_1)},2)^{o(r_1)}=(2^{e(r_1)}o(r_1),2o(r_1))=(r_{1}, 2o(r_1))= \alpha_\#(1,0)$. So condition~(\ref{it:algebra_tau_1b}) of Lemma~\ref{lem:algebra_tau_1} holds. Finally, $b=(c \sigma c \sigma)^m =(c l_\sigma(c) \sigma^2)^m$, so $(p_1)_\#(b)=(p_1)_\#(l_\sigma(c))^m=(2^{e(r_1)},2)^{m}=(2^{e(r_1)}m,2m)=(r_2,2m)
= \alpha'_\# (0,1)$, and condition~(\ref{it:algebra_tau_1c}) of Lemma~\ref{lem:algebra_tau_1} is satisfied, which proves the proposition.
\end{proof}

The rest of this section is devoted to proving Proposition~\ref{prop:tau_1_case_6}.

\begin{lemma}\label{lem:lema_BU_t2_k2_1_pont_3_a}
Let $\alpha \in \left[ \torus,* ; \klein,* \right]$ be a homotopy class such that $\alpha_\# : \pi_1(\torus) \to \pi_1(\klein)$ satisfies $\alpha_\#(1,0 )=(r_1,2)$ and $\alpha_\# (0,1) =(r_2,0)$, where $r_{1},r_{2}\in \z$. With the notation of Proposition~\ref{prop:words}, if
$\alpha$ does not have the Borsuk-Ulam property, then there exist $x,y \in \gsigmab$ and $(m_1 ,n_1) \in \zsdz$ such that:
\begin{align}\label{eq:muxnuy}
\mu(x)+\nu(y)= & \widetilde{J}_{n_{1}-1,-2r_{2}} -\widetilde{O}_{n_{1}-1,2\delta_{n_{1}}r_{2}}- \widetilde{T}_{2\delta_{n_{1}} r_{2},\delta_{n_{1}}} +\\
& r_{2} B_{2(n_{1}-1),2\delta_{n_{1}+1} m_{1}-\varepsilon_{n_{1}}r_{1}}-(m_1-\delta_{n_1})B_{0,2\delta_{n_{1}}r_{2}} -(\delta_{n_{1}}(1-2r_{2})-m_{1}+r_{2})B_{0,0} \notag
\end{align}
in $\gsigmab$, where $\mu,\nu : \gsigmab \to \gsigmab$ are the homomorphisms defined on the elements of the basis $\{ B_{k,l}\}_{k,l\in \z}$ of $\gsigmab$ by:
\begin{align}
\mu(B_{k,l}) =&B_{k,l+2\varepsilon_{k}\delta_{n_{1}}r_{2}}-B_{k,l-2\delta_{k}r_{2}}\label{eq:defmu}\\
\nu(B_{k,l}) =&\varepsilon_{k} \varepsilon_{n_{1}} B_{-k,\varepsilon_{n_{1}}\varepsilon_{k+1}l-2\delta_{k}(m_{1}+2\delta_{n_{1}}r_{2})} \label{eq:defnu}-B_{k+2(n_{1}-1), l+\varepsilon_{k}(2\delta_{n_{1}+1}m_{1}-\varepsilon_{n_{1}}r_{1})}. 
\end{align}
\end{lemma}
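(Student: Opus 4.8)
The plan is to follow verbatim the template of Lemma~\ref{lem:aux_1}, of which this statement is the exact analogue for a class whose second coordinates are $(2,0)$ rather than $(2s,1)$. Since $\alpha$ does not have the Borsuk-Ulam property, Lemma~\ref{lem:algebra_tau_1} provides $a,b\in P_2(\klein)$ satisfying conditions~(\ref{it:algebra_tau_1a})--(\ref{it:algebra_tau_1c}), and Proposition~\ref{prop:normal_form} lets me write them in the normal form~(\ref{eq:abdecomp}), say $a=(u^{a_1}v^{a_2}x;m_1,n_1)$ and $b=(u^{b_1}v^{b_2}y;m_2,n_2)$ with $x,y\in\gsigma$. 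The first task is to pin down the integer exponents. Condition~(\ref{it:algebra_tau_1c}) gives $(m_2,n_2)=(r_2,0)$ at once. Taking $b=a$ in~(\ref{eq:galb}) and imposing condition~(\ref{it:algebra_tau_1b}) with $\alpha_\#(1,0)=(r_1,2)$ forces $2n_1+a_2=2$, whence $a_2=2(1-n_1)$ is even, and comparing first coordinates yields $a_1=\varepsilon_{n_1}r_1-2\delta_{n_1+1}m_1$. Feeding condition~(\ref{it:algebra_tau_1a}) into Lemma~\ref{lem:gencalc} with $n_2=0$ and $m_2=r_2$ then gives $b_2=0$ and $b_1=-2\delta_{n_1}r_2$ (in particular even, as predicted).

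With these values fixed, I substitute into the master identity~(\ref{eq:basiclsigma}), whose two sides are $p_F(ba)$ and $p_F(al_\sigma(b))$. The heart of the argument is to rewrite each side so that the special words of Proposition~\ref{prop:words} become visible. On the $p_F(ba)$-side, the factor $(B^{\delta_{n_2}}vu^{-2m_2})^{a_2}=(vu^{-2r_2})^{-2(n_1-1)}$ is exactly $v^{a_2}J_{n_1-1,-2r_2}$ by the definition of $J$, so inserting cancelling powers of $v$ produces the word $J_{n_1-1,-2r_2}$ and conjugates $y$ into the shape $c_{2(n_1-1),-a_1}(y)$, up to residual powers of the $B_{k,l}$. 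On the $p_F(al_\sigma(b))$-side, the identity $(B^{\varepsilon_{n_1}}u^{-\varepsilon_{n_1}})^{b_1}=u^{-2\delta_{n_1}r_2}T_{2\delta_{n_1}r_2,\delta_{n_1}}$ follows from the definition of $T$ and the relation $\delta_{n_1}\varepsilon_{n_1}=-\delta_{n_1}$, while moving the extracted power $u^{-2\delta_{n_1}r_2}$ across $v^{a_2}$ produces the commutator $O_{n_1-1,2\delta_{n_1}r_2}$; the $x$- and $\rho(y)$-terms are brought into the forms $c_{0,2\delta_{n_1}r_2}(x)$ and $\theta(m_1+2\delta_{n_1}r_2,\delta_{n_1})(\rho(y))$, using $m_1+\varepsilon_{n_1}b_1=m_1+2\delta_{n_1}r_2$. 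Writing both sides in the normal form of Proposition~\ref{prop:normal_form}, the common leading factor $u^{a_1+b_1}v^{a_2}$ cancels and leaves an equality between two elements of $\gsigma$.

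Abelianising this equality is then mechanical. The conjugations become the shift maps~(\ref{eq:homo_gsigmab_cpq}), the actions become~(\ref{eq:homo_gsigmab_theta}), the map $\rho$ becomes~(\ref{eq:homo_gsigmab_rho}), and the images $\widetilde{J}_{n_1-1,-2r_2}$, $\widetilde{O}_{n_1-1,2\delta_{n_1}r_2}$, $\widetilde{T}_{2\delta_{n_1}r_2,\delta_{n_1}}$ of the special words are read off from Proposition~\ref{prop:words}. Collecting the $x$-terms gives $\mu=(c_{0,2\delta_{n_1}r_2})_\ab-\theta(r_2,0)_\ab$, which by~(\ref{eq:homo_gsigmab_theta}) and~(\ref{eq:homo_gsigmab_cpq}) is precisely~(\ref{eq:defmu}); collecting the $y$-terms gives $\nu=\theta(m_1+2\delta_{n_1}r_2,\delta_{n_1})_\ab\circ\rho_\ab-(c_{2(n_1-1),-a_1})_\ab$, which after substituting $-a_1=2\delta_{n_1+1}m_1-\varepsilon_{n_1}r_1$ is exactly~(\ref{eq:defnu}). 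The leftover powers of the generators assemble into the constant right-hand side of~(\ref{eq:muxnuy}).

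I expect the only delicate point to be the non-abelian bookkeeping of the second paragraph: inserting the correct cancelling powers of $u$ and $v$ so that $J$, $O$ and $T$ emerge with the right indices, and tracking all the residual powers of the $B_{k,l}$ and their conjugating elements (which vanish upon Abelianisation but whose indices survive). Once the word-level identity in $\gsigma$ is established, the passage to $\gsigmab$ and the identification of $\mu$, $\nu$ and the constant term are routine applications of~(\ref{eq:homo_gsigmab_theta})--(\ref{eq:homo_gsigmab_cpq}) together with Proposition~\ref{prop:words}.
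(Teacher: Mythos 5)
Your proposal is correct and follows essentially the same route as the paper's own proof: the exponents $(m_2,n_2)=(r_2,0)$, $a_2=2(1-n_1)$, $a_1=\varepsilon_{n_1}r_1-2\delta_{n_1+1}m_1$, $b_2=0$, $b_1=-2\delta_{n_1}r_2$ are pinned down exactly as in the paper via Lemma~\ref{lem:algebra_tau_1} and Lemma~\ref{lem:gencalc}, and the word-level extractions you describe --- $(vu^{-2r_2})^{a_2}=v^{a_2}J_{n_1-1,-2r_2}$, $(B^{\varepsilon_{n_1}}u^{-\varepsilon_{n_1}})^{b_1}=u^{-2\delta_{n_1}r_2}T_{2\delta_{n_1}r_2,\delta_{n_1}}$, the commutator $O_{n_1-1,2\delta_{n_1}r_2}$ from transporting $u^{\pm 2\delta_{n_1}r_2}$ past $v^{a_2}$, and the conjugates $c_{2(n_1-1),-a_1}(y)$, $c_{0,2\delta_{n_1}r_2}(x)$, $\theta(m_1+2\delta_{n_1}r_2,\delta_{n_1})(\rho(y))$ --- are precisely the manipulations the paper performs before Abelianising via~(\ref{eq:homo_gsigmab_theta})--(\ref{eq:homo_gsigmab_cpq}). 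The identifications of $\mu$, $\nu$ and the constant term agree with the paper's equations~(\ref{eq:munu})--(\ref{eq:nubkl}), so the only difference is that you leave the bookkeeping of the residual $B_{k,l}$-powers as a sketch where the paper writes it out in full.
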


\begin{proof}
Let $\alpha \in \left[ \torus,* ; \klein,* \right]$ be a homotopy class that does not have the Borsuk-Ulam property, and that is represented by the homomorphism $\alpha_\# : \pi_1(\torus) \to \pi_1(\klein)$ given by $\alpha_\#(1,0)=(r_1,2)$ and $\alpha_\# (0,1) =(r_2,0)$, where $r_{1},r_{2}\in \z$. Then there exist $a,b\in P_{2}(\klein)$ satisfying conditions~(\ref{it:algebra_tau_1a})--(\ref{it:algebra_tau_1c}) of Lemma~\ref{lem:algebra_tau_1}. We write these elements in the form of equation~(\ref{eq:abdecomp}). Condition~(\ref{it:algebra_tau_1c}) implies that $(m_{2},n_{2})=(r_{2},0)$, and taking $b=a$ in~(\ref{eq:galb}), condition~(\ref{it:algebra_tau_1b}) implies that $(r_{1},2)=(m_1(1+(-1)^{n_1+a_2})+(-1)^{n_1} a_1, 2n_1+a_2)$. It follows that $a_{2}=2(1-n_{1})$ and that $a_{1}=\varepsilon_{n_{1}}r_{1}-2\delta_{n_{1}+1} m_{1}$. By~(\ref{eq:n1n2}), we have $b_{2}=0$ and $(1+(-1)^{\delta_{n_{1}}+1})m_{2}= (1+(-1)^{\delta_{n_{2}}+1})m_{1}+(-1)^{\delta_{n_{1}}}b_{1}$, from which we see that $b_{1}=-2\delta_{n_{1}}r_{2}$ using the fact that $n_{2}=0$. Substituting this information into~(\ref{eq:projba}) and~(\ref{eq:projalsigma}), and using that fact that $\varepsilon_{n_{1}}\delta_{n_{1}}=-\delta_{n_{1}}$, we obtain:
\begin{align*}
p_{F}(ba)=& u^{-2\delta_{n_{1}}r_{2}} y B^{r_2} u^{\varepsilon_{n_{1}}r_{1}-2\delta_{n_{1}+1} m_{1}} (vu^{-2r_{2}})^{2(1-n_{1})} B^{-r_2} \theta(r_2,0)(x)\\
=& u^{\varepsilon_{n_{1}}r_{1}-2\delta_{n_{1}}r_{2}-2\delta_{n_{1}+1} m_{1}} v^{2(1-n_{1})} \ldotp v^{2(n_{1}-1)} u^{2\delta_{n_{1}+1} m_{1}-\varepsilon_{n_{1}}r_{1}} y B^{r_2} \ldotp\\
& u^{\varepsilon_{n_{1}}r_{1}-2\delta_{n_{1}+1} m_{1}} v^{2(1-n_{1})}\ldotp
v^{2(n_{1}-1)}(vu^{-2r_{2}})^{2(1-n_{1})} B_{0,0}^{-r_2} \theta(r_2,0)(x)\\
=& u^{\varepsilon_{n_{1}}r_{1}-2\delta_{n_{1}}r_{2}-2\delta_{n_{1}+1} m_{1}} v^{2(1-n_{1})} c_{2(n_{1}-1),2\delta_{n_{1}+1} m_{1}-\varepsilon_{n_{1}}r_{1}}(y) \ldotp\\
& B_{2(n_{1}-1),2\delta_{n_{1}+1} m_{1}-\varepsilon_{n_{1}}r_{1}}^{r_{2}}
J_{n_{1}-1,-2r_{2}} B_{0,0}^{-r_2} \theta(r_2,0)(x),\\
p_{F}(al_\sigma(b))=& u^{\varepsilon_{n_{1}}r_{1}-2\delta_{n_{1}+1} m_{1}} v^{2(1-n_{1})} x B^{m_1-\delta_{n_1}} (B^{\varepsilon_{n_{1}}} u^{-\varepsilon_{n_{1}}})^{2\varepsilon_{n_{1}}\delta_{n_{1}} r_{2}} \ldotp\\
& B^{\delta_{n_{1}}(1-2r_{2})-m_{1}} \theta(m_{1}+2\delta_{n_{1}}r_{2}, \delta_{n_{1}})(\rho(y))\\
=& u^{\varepsilon_{n_{1}}r_{1}-2\delta_{n_{1}+1} m_{1}-2\delta_{n_{1}}r_{2}} v^{2(1-n_{1})} \ldotp v^{2(n_{1}-1)} u^{2\delta_{n_{1}}r_{2}}  v^{2(1-n_{1})} u^{-2\delta_{n_{1}}r_{2}} \ldotp \\
& u^{2\delta_{n_{1}}r_{2}} x u^{-2\delta_{n_{1}}r_{2}} \ldotp u^{2\delta_{n_{1}}r_{2}} B^{m_1-\delta_{n_1}} u^{-2\delta_{n_{1}} r_{2}} \ldotp u^{2\delta_{n_{1}} r_{2}} \ldotp\\
& (B^{\varepsilon_{n_{1}}} u^{-\varepsilon_{n_{1}}})^{2\varepsilon_{n_{1}}\delta_{n_{1}} r_{2}} B_{0,0}^{\delta_{n_{1}}(1-2r_{2})-m_{1}}\ldotp
\theta(m_{1}+2\delta_{n_{1}}r_{2}, \delta_{n_{1}})(\rho(y))\\
=& u^{\varepsilon_{n_{1}}r_{1}-2\delta_{n_{1}+1} m_{1}-2\delta_{n_{1}}r_{2}} v^{2(1-n_{1})} O_{n_{1}-1,2\delta_{n_{1}}r_{2}} c_{0,2\delta_{n_{1}}r_{2}}(x) B_{0,2\delta_{n_{1}}r_{2}}^{m_1-\delta_{n_1}} \ldotp \\ 
& T_{2\delta_{n_{1}} r_{2},\delta_{n_{1}}} B_{0,0}^{\delta_{n_{1}}(1-2r_{2})-m_{1}} \theta(m_{1}+2\delta_{n_{1}}r_{2}, \delta_{n_{1}})(\rho(y)).
\end{align*}
Applying condition~(\ref{it:algebra_tau_1a}) of Lemma~\ref{lem:algebra_tau_1}, we see that:
\begin{multline*}
c_{2(n_{1}-1),2\delta_{n_{1}+1} m_{1}-\varepsilon_{n_{1}}r_{1}}(y) B_{2(n_{1}-1),2\delta_{n_{1}+1} m_{1}-\varepsilon_{n_{1}}r_{1}}^{r_{2}} J_{n_{1}-1,-2r_{2}} B_{0,0}^{-r_2} \theta(r_2,0)(x)=\\
O_{n_{1}-1,2\delta_{n_{1}}r_{2}} c_{0,2\delta_{n_{1}}r_{2}}(x) B_{0,2\delta_{n_{1}}r_{2}}^{m_1-\delta_{n_1}} T_{2\delta_{n_{1}} r_{2},\delta_{n_{1}}} B_{0,0}^{\delta_{n_{1}}(1-2r_{2})-m_{1}} \theta(m_{1}+2\delta_{n_{1}}r_{2}, \delta_{n_{1}})(\rho(y)),
\end{multline*}
in $\gsigma$, and by Abelianising this equation, we obtain the following equality in $\gsigmab$:
\begin{multline}\label{eq:munu}
(c_{0,2\delta_{n_{1}}r_{2}})_{\ab}(x)-\theta(r_2,0)_{\ab}(x) +\theta(m_{1}+2\delta_{n_{1}}r_{2}, \delta_{n_{1}})_{\ab}\circ\rho_{ab}(y) -(c_{2(n_{1}-1),2\delta_{n_{1}+1} m_{1}-\varepsilon_{n_{1}}r_{1}})_{\ab}(y)=\\\widetilde{J}_{n_{1}-1,-2r_{2}} -\widetilde{O}_{n_{1}-1,2\delta_{n_{1}}r_{2}}  - \widetilde{T}_{2\delta_{n_{1}} r_{2},\delta_{n_{1}}}+r_{2} B_{2(n_{1}-1),2\delta_{n_{1}+1} m_{1}-\varepsilon_{n_{1}}r_{1}}\\
-(m_1-\delta_{n_1})B_{0,2\delta_{n_{1}}r_{2}} -(\delta_{n_{1}}(1-2r_{2})-m_{1}+r_{2})B_{0,0}.
\end{multline}
Now by~(\ref{eq:homo_gsigmab_theta})--(\ref{eq:homo_gsigmab_cpq}), for all $k,l\in \z$, we may check that:
\begin{align}
\mu(B_{k,l})=&(c_{0,2\delta_{n_{1}}r_{2}})_{\ab}(B_{k,l})-\theta(r_2,0)_{\ab}(B_{k,l})\; \text{and}\label{eq:mubkl}\\
\nu(B_{k,l})=&\theta(m_{1}+2\delta_{n_{1}}r_{2}, \delta_{n_{1}})_{\ab}\circ\rho_{ab}(B_{k,l}) \label{eq:nubkl}  -(c_{2(n_{1}-1),2\delta_{n_{1}+1} m_{1}-\varepsilon_{n_{1}}r_{1}})_{\ab}(B_{k,l}).
\end{align}
Equation~(\ref{eq:muxnuy}) then follows from~(\ref{eq:munu}),~(\ref{eq:mubkl}) and~(\ref{eq:nubkl}).
\end{proof}

In what follows, we suppose that the hypotheses of Proposition~\ref{prop:tau_1_case_6} hold, namely $r_{1}\geq 0$, $r_2 \neq 0$, and either $r_{1}=0$, or $r_{1}>0$ and $e(r_1) > e(r_2)$. With the notation of Lemma~\ref{lem:lema_BU_t2_k2_1_pont_3_a}, we define the homomorphism $\xi_{n_{1},r_2} : \gsigmab \to \ztwo$ on the basis $\{B_{k,l}\}_{k,l\in \z}$ as follows:
\begin{equation}\label{eq:defxi}
\xi_{n_{1},r_2}(B_{k,l})= \begin{cases}
\overline{0} & \text{if $k\neq n_{1}-1$, or if $k= n_{1}-1$ and $2^{e(r_{2})+1} \nmid l$}\\
\overline{1} & \text{if $k= n_{1}-1$ and $2^{e(r_{2})+1} \mid l$.}
\end{cases}
\end{equation}

\begin{lemma}\label{lem:xi_mu_nu}
With the notation of Lemma~\ref{lem:lema_BU_t2_k2_1_pont_3_a}, the compositions $\xi_{n_{1},r_2}\circ \mu: \gsigmab \to \ztwo$ and $\xi_{n_{1},r_2}\circ \nu: \gsigmab \to \ztwo$ are identically zero.
\end{lemma}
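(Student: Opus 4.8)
The plan is to exploit that $\xi_{n_1,r_2}$, $\mu$ and $\nu$ are all homomorphisms, so it suffices to verify that $\xi_{n_1,r_2}(\mu(B_{k,l}))=\overline{0}$ and $\xi_{n_1,r_2}(\nu(B_{k,l}))=\overline{0}$ on each basis element $B_{k,l}$. By~(\ref{eq:defmu}) and~(\ref{eq:defnu}), each of $\mu(B_{k,l})$ and $\nu(B_{k,l})$ is a $\z$-linear combination of exactly two basis elements, with coefficients $\pm 1$ and $\pm\varepsilon_{k}\varepsilon_{n_{1}}$. Since we map into $\ztwo$, every such coefficient is $\equiv\overline{1}$ and the subtraction becomes an addition, so in both cases the task reduces to showing that $\xi_{n_1,r_2}$ takes the \emph{same} value on the two basis elements that occur. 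Recall from~(\ref{eq:defxi}) that $\xi_{n_1,r_2}(B_{a,c})=\overline{1}$ precisely when $a=n_{1}-1$ and $2^{e(r_{2})+1}\mid c$.

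For $\mu$, both terms of $\mu(B_{k,l})$ carry the first index $k$. If $k\neq n_{1}-1$ both are sent to $\overline{0}$; if $k=n_{1}-1$, I would compute the difference of their second indices, namely $(l+2\varepsilon_{k}\delta_{n_{1}}r_{2})-(l-2\delta_{k}r_{2})=2r_{2}(\varepsilon_{k}\delta_{n_{1}}+\delta_{k})$, and observe that for $k=n_{1}-1$ the factor $\varepsilon_{k}\delta_{n_{1}}+\delta_{k}$ equals $1$ whether $n_{1}$ is even or odd. Thus the two second indices differ by $2r_{2}$, which has $2$-adic valuation $e(r_{2})+1$ and is therefore divisible by $2^{e(r_{2})+1}$; consequently $2^{e(r_{2})+1}$ divides one second index if and only if it divides the other, so $\xi_{n_1,r_2}$ agrees on the two terms and their difference vanishes in $\ztwo$. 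This argument involves only $r_{2}$ and needs no hypothesis on $r_{1}$.

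For $\nu$ the two first indices are $-k$ and $k+2(n_{1}-1)$, and a direct check shows that both equal $n_{1}-1$ exactly when $k=1-n_{1}$; otherwise both terms map to $\overline{0}$. Assuming $k=1-n_{1}$, I would substitute this value and simplify the two second indices using $\varepsilon_{n_{1}}^{2}=1$, $\varepsilon_{k+1}=\varepsilon_{n_{1}}$, $\varepsilon_{k}=-\varepsilon_{n_{1}}$, $\delta_{k}=\delta_{1-n_{1}}=\delta_{n_{1}+1}$, $\varepsilon_{n_{1}}\delta_{n_{1}+1}=\delta_{n_{1}+1}$ and $\delta_{n_{1}}\delta_{n_{1}+1}=0$. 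This reduces the two second indices to $l-2\delta_{n_{1}+1}m_{1}$ and $l-2\delta_{n_{1}+1}m_{1}+r_{1}$, which differ by $r_{1}$. Here is where the hypotheses of Proposition~\ref{prop:tau_1_case_6} enter: if $r_{1}=0$ the two indices coincide, while if $r_{1}>0$ and $e(r_{1})>e(r_{2})$ then $e(r_{1})\geq e(r_{2})+1$, so $2^{e(r_{2})+1}\mid 2^{e(r_{1})}\mid r_{1}$. In either case the two second indices are congruent modulo $2^{e(r_{2})+1}$, so $\xi_{n_1,r_2}$ again takes equal values on the two terms and $\xi_{n_1,r_2}(\nu(B_{k,l}))=\overline{0}$.

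The routine part is the parity bookkeeping in the two substitutions; the genuinely load-bearing observation, which I expect to be the crux, is that the relevant index-differences are exactly $2r_{2}$ (for $\mu$) and $r_{1}$ (for $\nu$), and that the divisibility threshold $2^{e(r_{2})+1}$ in the definition of $\xi_{n_1,r_2}$ is chosen precisely so that $2r_{2}$ is \emph{always} divisible by it, whereas $r_{1}$ is divisible by it exactly under the hypothesis $r_{1}=0$ or $e(r_{1})>e(r_{2})$. This makes transparent why the condition $e(r_{1})>e(r_{2})$ will turn out to be the borderline separating the Borsuk-Ulam property from its failure.
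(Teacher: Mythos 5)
Your proof is correct and follows essentially the same route as the paper's: verify on the basis elements $B_{k,l}$, note that both terms of $\mu(B_{k,l})$ (resp.\ $\nu(B_{k,l})$) can have first index $n_1-1$ only when $k=n_1-1$ (resp.\ $k=1-n_1$), and then show the two second indices differ by exactly $2r_2$ (resp.\ $r_1$), each divisible by $2^{e(r_2)+1}$ --- for $r_1$ precisely because of the standing hypotheses $r_1=0$ or $e(r_1)>e(r_2)$ of Proposition~\ref{prop:tau_1_case_6}. Your parity bookkeeping matches the identities the paper uses ($\varepsilon_{n_1-1}\delta_{n_1}=\delta_{n_1}$, $\delta_{n_1-1}\delta_{n_1}=0$, $\varepsilon_{n_1-1}\delta_{n_1+1}=-\delta_{n_1+1}$), so nothing is missing.
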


\begin{proof}
It suffices to prove that $\xi_{n_{1},r_2}\circ \mu(B_{k,l})= \xi_{n_{1},r_2}\circ \nu(B_{k,l})=\overline{0}$ for all $k,l\in \z$. We start with the case of $\xi_{n_{1},r_2}\circ \mu$. By~(\ref{eq:defmu}), clearly $\xi_{n_{1},r_2}\circ \mu(B_{k,l})=\overline{0}$ if $k\neq n_{1}-1$. So suppose that $k=n_{1}-1$. Using the fact that $\varepsilon_{n_{1}-1}\delta_{n_{1}}=\delta_{n_{1}}$, we have:
\begin{align}
\xi_{n_{1},r_2}\circ \mu(B_{n_{1}-1,l})&= \xi_{n_{1},r_2}(B_{n_{1}-1,l+2\varepsilon_{n_{1}-1}\delta_{n_{1}}r_{2}}-B_{n_{1}-1,l-2\delta_{n_{1}-1}r_{2}})\notag\\
&= \xi_{n_{1},r_2}(B_{n_{1}-1,l+2\delta_{n_{1}}r_{2}})-\xi_{n_{1},r_2}(B_{n_{1}-1,l-2\delta_{n_{1}-1}r_{2}}).\label{eq:calcxi1}
\end{align}
Now $(l+2\delta_{n_{1}}r_{2})-(l-2\delta_{n_{1}-1}r_{2})=2r_{2}(\delta_{n_{1}}+\delta_{n_{1}-1})=2r_{2}$, hence $2^{e(r_{2})+1} \mid l+2\delta_{n_{1}}r_{2}$ if and only if $2^{e(r_{2})+1} \mid l-2\delta_{n_{1}-1}r_{2}$, and it follows from~(\ref{eq:defxi}) and~(\ref{eq:calcxi1}) that $\xi_{n_{1},r_2}\circ \mu(B_{n_{1}-1,l})=\overline{0}$ as required. We now analyse the case of $\xi_{n_{1},r_2}\circ \nu$. Since $-k=n_{1}-1$ if and only if $k+2(n_{1}-1)=n_{1}-1$, it follows from~(\ref{eq:defnu}) that $\xi_{n_{1},r_2}\circ \nu(B_{k,l})=\overline{0}$ if $k\neq -(n_{1}-1)$. So suppose that $k= -(n_{1}-1)$. Since $\varepsilon_{k}\varepsilon_{n_{1}}=\varepsilon_{n_{1}-1}\varepsilon_{n_{1}}=-1$, $\varepsilon_{n_{1}}\varepsilon_{k+1}=\varepsilon_{n_{1}}^{2}=1$, $\delta_{n_{1}-1}\delta_{n_{1}}=0$ and $\varepsilon_{n_{1}-1}\delta_{n_{1}+1}=-\delta_{n_{1}+1}$, we obtain:
\begin{align}
\xi_{n_{1},r_2}\circ \nu(B_{-(n_{1}-1),l})=&-\xi_{n_{1},r_2}(B_{n_{1}-1,l-2\delta_{n_{1}-1}(m_{1}+2\delta_{n_{1}}r_{2})}) -\xi_{n_{1},r_2}(B_{n_{1}-1, l+\varepsilon_{n_{1}-1}(2\delta_{n_{1}+1}m_{1}-\varepsilon_{n_{1}}r_{1})})\notag\\
=&-\xi_{n_{1},r_2}(B_{n_{1}-1,l-2\delta_{n_{1}-1}m_{1}}) -\xi_{n_{1},r_2}(B_{n_{1}-1, l-2\delta_{n_{1}+1}m_{1}+r_{1}}).\label{eq:calcxi2}
\end{align}
Now $(l-2\delta_{n_{1}-1}m_{1})- (l-2\delta_{n_{1}+1}m_{1}+r_{1})=-r_{1}$. Since $2^{e(r_{2})+1} \mid r_{1}$, it follows that $2^{e(r_{2})+1} \mid l-2\delta_{n_{1}-1}m_{1}$ if and only if $2^{e(r_{2})+1} \mid l-2\delta_{n_{1}+1}m_{1}+r_{1}$. Equations~(\ref{eq:defxi}) and~(\ref{eq:calcxi2}) then imply that $\xi_{n_{1},r_2}\circ \nu(B_{-(n_{1}-1),l})=\overline{0}$ as required.
\end{proof}

We now complete the proof of Theorem~\ref{th:BORSUK_TAU_1}. The following remark will be used in the proof of Proposition~\ref{prop:tau_1_case_6}.

\begin{remark}\label{rem:divisible}
Let $r\in \z\setminus \{0\}$, and let $S$ be a set consisting of $2\left\lvert r\right\rvert$ consecutive integers. Then $S$ contains $o(r) = |r|/2^{e(r)}$ elements divisible by $2^{e(r)+1}$. 
\end{remark}

\begin{proof}[of Proposition~\ref{prop:tau_1_case_6}]
Let $\alpha \in [ \torus, * ; \klein,*]$ be a pointed homotopy class such that $\alpha_\#(1,0) =(r_1,2)$ and $\alpha_\#(0,1) =(r_2,0)$, where  $r_2 \neq 0$ and either  $r_1=0$, or $r_1 > 0$ and $e(r_1) > e(r_2)$. Suppose on the contrary that $\alpha$ does not have the Borsuk-Ulam property with respect to $\tau_1$. Then by Lemma~\ref{lem:lema_BU_t2_k2_1_pont_3_a}, there exist $x,y\in \gsigmab$ such that equation~(\ref{eq:muxnuy}) holds. By Lemma~\ref{lem:xi_mu_nu}, $\xi_{n_{1},r_2}(\mu(x)+\nu(y))=\overline{0}$. So to prove the result, it suffices to show that the image of the right-hand side of~(\ref{eq:muxnuy}) by $\xi_{n_{1},r_2}$ is equal to $\overline{1}$. We analyse each of the terms in turn.
\begin{enumerate}[(a)]
\item We start by showing that:
\begin{equation}\label{eq:formJnr}
\xi_{n_{1},r_2}(\widetilde{J}_{n_{1}-1,-2r_{2}})= \delta_{n_{1}+1} \overline{1}.
\end{equation}
If $n_1 = 1$ then $\widetilde{J}_{n_{1}-1,-2r_{2}} = 0$ by Proposition~\ref{prop:words}(\ref{it:wordsb}), so $\xi_{n_{1},r_2}(\widetilde{J}_{n_{1}-1,-2r_{2}})=\overline{0}$. So suppose that $n_1 \neq 1$. By Proposition~\ref{prop:words}(\ref{it:wordsc}) and~(\ref{eq:defxi}), we have:
\begin{equation*}
\xi_{n_{1},r_2}(\widetilde{J}_{n_{1}-1,-2r_{2}})=  \sum_{i=1}^{\sigma_{n_{1}-1}(n_{1}-1)} \; \sum_{j=1}^{\sigma_{r_{2}}2r_{2}} \xi_{n_{1},r_2}  \biggl(B_{\sigma_{n_{1}-1}(2i-1), -\sigma_{2r_{2}}(j-(1-\sigma_{2r_{2}})/2)}\biggr).
\end{equation*}
If $n_{1}$ is odd, there is no integer $i$ satisfying $\sigma_{n_{1}-1}(2i-1)= n_{1}-1$, and thus $\xi_{n_{1},r_2}(\widetilde{J}_{n_{1}-1,-2r_{2}})=\overline{1}$ in this case. If $n_1$ is even, then $\sigma_{n_{1}-1}(2i-1)= n_{1}-1$ if and only if $i=(\sigma_{n_{1}-1}(n_{1}-1)+1)/2$, and in this case, $i$ belongs to the allowed set $\{ 1,\ldots, \sigma_{n_{1}-1}(n_{1}-1) \}$ of indices. Now consider the terms of the form $B_{n_{1}-1, -\sigma_{2r_{2}}(j-(1-\sigma_{2r_{2}})/2)}$, where $j\in \{ 1,\ldots, \sigma_{r_{2}}2r_{2}\}$. Then the set $\{ -\sigma_{2r_{2}}(j-(1-\sigma_{2r_{2}})/2) \mid j= 1,\ldots, \sigma_{r_{2}}2r_{2} \}$ consists of $2\left\lvert r_{2}\right\rvert$ consecutive integers, and thus contains $o(r_{2})$ elements divisible by $2^{e(r_2)+1}$ by Remark~\ref{rem:divisible}. It follows from~(\ref{eq:defxi}) that $\xi_{n_{1},r_2}(\widetilde{J}_{n_{1}-1,-2r_{2}})=\overline{1}$, and this proves~(\ref{eq:formJnr}). 
\item Consider the term $\widetilde{O}_{n_{1}-1,2\delta_{n_{1}}r_{2}}$. If $n_{1}$ is even or is equal to $1$ then $\widetilde{O}_{n_{1}-1,2\delta_{n_{1}}r_{2}}=0$ by Proposition~\ref{prop:words}(\ref{it:wordsb}), and $\xi_{n_{1},r_2}(\widetilde{O}_{n_{1}-1,2\delta_{n_{1}}r_{2}})=\overline{0}$. So assume that $n_{1}$ is odd and different from $1$. By Proposition~\ref{prop:words}(\ref{it:wordsc}) and~(\ref{eq:defxi}), we have:
\begin{align}
\xi_{n_1,r_2}(\widetilde{O}_{n_{1}-1,2\delta_{n_{1}}r_{2}})=&\xi_{n_1,r_2}(\widetilde{O}_{n_{1}-1,2r_{2}})\notag\\
=& \sum_{i=1}^{\sigma_{n_{1}-1}(n_{1}-1)} \; \sum_{j=1}^{\sigma_{r_{2}}2r_{2}} 
\Bigl( \xi_{n_1,r_2} \left( B_{\sigma_{n_1 -1}(2i-1),-\sigma_{2 r_2}j+(\sigma_{2 r_2}-1)/2} \right) +\notag\\
& \xi_{n_1,r_2} \left( B_{\sigma_{n_1 -1}(2i-1)-1,\sigma_{2 r_2}j-(\sigma_{2 r_2}+1)/2} \right) \Bigr).\label{eq:formOnr}
\end{align}
Observe that there is no integer $i$ satisfying $\sigma_{n_1 -1}(2i-1) = n_1 -1$, and it follows that $\xi_{n_1,r_2} \left( B_{\sigma_{n_1 -1}(2i-1),-\sigma_{2 r_2}j+(\sigma_{2 r_2}-1)/2} \right) = \overline{0}$ for all $i\in \{ 1,\ldots,  \sigma_{n_{1}-1}(n_{1}-1)\}$ and $j\in \{ 1,\ldots, \sigma_{r_{2}}2r_{2}\}$. For the second term of~(\ref{eq:formOnr}), note that $\sigma_{n_1 -1}(2i-1) - 1 = n_1 -1$ if and only if $i = (\sigma_{n_{1}-1} (n_1-1) + \sigma_{n_{1}-1}+1)/2$, and in this case, $i$ belongs to the allowed set $\{ 1,\ldots, \sigma_{n_{1}-1}(n_{1}-1) \}$ of indices. Now consider the terms of the form $B_{n_{1}-1, \sigma_{2 r_2}j-(\sigma_{2 r_2}-1)/2}$, where $j\in \{ 1,\ldots, \sigma_{r_{2}}2r_{2}\}$. Then the set $\{ \sigma_{2 r_2}j-(\sigma_{2 r_2}+1)/2 \mid j= 1,\ldots, \sigma_{r_{2}}2r_{2} \}$ consists of $2\left\lvert r_{2}\right\rvert$ consecutive integers, and thus contains $o(r_{2})$ elements divisible by $2^{e(r_2)+1}$ by Remark~\ref{rem:divisible}. It follows from~(\ref{eq:defxi}) that $\xi_{n_{1},r_2}(\widetilde{O}_{n_{1}-1,2r_{2}})=\overline{1}$. Hence:
\begin{equation}\label{eq:formOnr2}
\xi_{n_{1},r_2}(\widetilde{O}_{n_{1}-1,2\delta_{n_{1}}r_{2}})= \delta_{n_{1}} \overline{1}.
\end{equation}
\item Consider the term $\widetilde{T}_{2\delta_{n_{1}} r_{2},\delta_{n_{1}}}$. If $n_{1}$ is even then $\widetilde{T}_{2\delta_{n_{1}} r_{2},\delta_{n_{1}}}=0$ by Proposition~\ref{prop:words}(\ref{it:wordsb}), and thus $\xi_{n_{1},r_2}(\widetilde{T}_{2\delta_{n_{1}} r_{2},\delta_{n_{1}}})=\overline{0}$. So assume that $n_{1}$ is odd. By Proposition~\ref{prop:words}(\ref{it:wordsc}), we have:
\begin{equation}\label{eq:formTnr2}
\widetilde{T}_{2\delta_{n_{1}} r_{2},\delta_{n_{1}}}=\widetilde{T}_{2r_{2},1}=\sigma_{r_{2}} \sum_{i=1}^{\sigma_{r_{2}}2r_{2}} B_{0,\sigma_{r_{2}}(i-(\sigma_{r_{2}}+1)/2)}.
\end{equation}
If $n_{1}\neq 1$ then $\xi_{n_{1},r_2}(\widetilde{T}_{2r_{2},1})=\overline{0}$ by~(\ref{eq:defxi}). So suppose that $n_{1}=1$. Then the set $\{ \sigma_{r_{2}}(i-(\sigma_{r_{2}}+1)/2) \mid i=1,\ldots, \sigma_{r_{2}}2r_{2}\}$ of indices consists of $2\left\lvert r_{2}\right\rvert$ consecutive integers, and thus contains $o(r_{2})$ elements divisible by $2^{e(r_2)+1}$ by Remark~\ref{rem:divisible}. It follows from~(\ref{eq:defxi}) that $\xi_{n_{1},r_2}(\widetilde{T}_{2r_{2},1})=\overline{1}$. Hence:
\begin{equation}\label{eq:formTnr2!!!V!!!}
\xi_{n_{1},r_2}(\widetilde{T}_{2\delta_{n_{1}} r_{2},\delta_{n_{1}}})=\begin{cases}
\overline{1} & \text{if $n_{1}=1$}\\
\overline{0} & \text{otherwise}.
\end{cases}
\end{equation}
\item Let $\chi=r_{2} B_{2(n_{1}-1),2\delta_{n_{1}+1} m_{1}-\varepsilon_{n_{1}}r_{1}}-(m_1-\delta_{n_1})B_{0,2\delta_{n_{1}}r_{2}} -(\delta_{n_{1}}(1-2r_{2})-m_{1}+r_{2})B_{0,0}$. If $n_{1}\neq 1$ then it follows from~(\ref{eq:defxi}) that $\xi_{n_{1},r_2}(\chi)=\overline{0}$. So suppose that $n_{1}=1$. Then $\chi=r_{2} B_{0,r_{1}}-(m_1-1)B_{0,2r_{2}} -(1-m_{1}-r_{2})B_{0,0}$. By hypothesis, $e(r_1) > e(r_2)$, and we see from~(\ref{eq:defxi}) that $\xi_{n_{1},r_2}(\chi)=\overline{r_{2}}+\overline{m_1-1}+\overline{1-m_{1}-r_{2}}=\overline{0}$. Thus:
\begin{multline}\label{eq:formB00}
\xi_{n_{1},r_2}\Bigl( r_{2} B_{2(n_{1}-1),2\delta_{n_{1}+1} m_{1}-\varepsilon_{n_{1}}r_{1}}-(m_1-\delta_{n_1})B_{0,2\delta_{n_{1}}r_{2}} -\\
(\delta_{n_{1}}(1-2r_{2})-m_{1}+r_{2})B_{0,0}\Bigr)=\overline{0}.
\end{multline}
\end{enumerate}
We now take the image of~(\ref{eq:muxnuy}) by $\xi_{n_{1},r_2}$. Using~(\ref{eq:formB00}) and Lemma~\ref{lem:lema_BU_t2_k2_1_pont_3_a}, it follows that:
\begin{equation}\label{eq:finalmunu}
\overline{0}=\xi_{n_{1},r_2}(\mu(x)+\nu(y))= \xi_{n_{1},r_2}(\widetilde{J}_{n_{1}-1,-2r_{2}} -\widetilde{O}_{n_{1}-1,2\delta_{n_{1}}r_{2}}- \widetilde{T}_{2\delta_{n_{1}} r_{2},\delta_{n_{1}}}).
\end{equation}
From~(\ref{eq:formJnr}),~(\ref{eq:formOnr2}) and~(\ref{eq:formTnr2}), we obtain the following conclusions:
\begin{enumerate}[(i)]
\item if $n_{1}$ is even then $\xi_{n_{1},r_2}(\widetilde{J}_{n_{1}-1,-2r_{2}})=\overline{1}$ and $\xi_{n_{1},r_2}(\widetilde{O}_{n_{1}-1,2\delta_{n_{1}}r_{2}})=  \xi_{n_{1},r_2}(\widetilde{T}_{2\delta_{n_{1}} r_{2},\delta_{n_{1}}})=\overline{0}$.
\item if $n_{1}=1$ then $\xi_{n_{1},r_2}(\widetilde{T}_{2\delta_{n_{1}} r_{2},\delta_{n_{1}}})=\overline{1}$ and $\xi_{n_{1},r_2}(\widetilde{O}_{n_{1}-1,2\delta_{n_{1}}r_{2}})=  \xi_{n_{1},r_2}(\widetilde{J}_{n_{1}-1,-2r_{2}})= \overline{0}$.
\item if $n_{1}$ is odd and $n_{1}\neq 1$ then $\xi_{n_{1},r_2}(\widetilde{J}_{n_{1}-1,-2r_{2}})=\xi_{n_{1},r_2}(\widetilde{T}_{2\delta_{n_{1}} r_{2},\delta_{n_{1}}})= \overline{0}$ and $\xi_{n_{1},r_2}(\widetilde{O}_{n_{1}-1,2\delta_{n_{1}}r_{2}})=\overline{1}$.
\end{enumerate}
In all three cases, we conclude that
$\xi_{n_{1},r_2}(\widetilde{J}_{n_{1}-1,-2r_{2}} -\widetilde{O}_{n_{1}-1,2\delta_{n_{1}}r_{2}}- \widetilde{T}_{2\delta_{n_{1}} r_{2},\delta_{n_{1}}})=\overline{1}$,
which contradicts equation~(\ref{eq:finalmunu}). It follows that $\alpha$ has the Borsuk-Ulam pro\-perty with respect to $\tau_1$.
\end{proof}


\appendix
\section{Appendix}\label{sec:appendix}

Let $g: F(u,v)\to \zsdz$ be the homomorphism defined on the generators of $F(u,v)$ by $g(u)=(1,0)$ and $g(v)=(0,1)$.

\begin{proposition}\label{prop:basis_gamma}
For each $k,l \in \z$, $l \neq 0$, let $\Gamma_{k,l}=v^k u^l v u^l v^{-k -1} \in F(u,v)$. Then $ \ker g=\left\langle \Gamma_{k,l},\; k,l\in \z,\, l\neq 0 \ | -\right\rangle$.
\end{proposition}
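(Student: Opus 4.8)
The plan is to determine a free basis of $\ker{g}$ directly by the Reidemeister--Schreier rewriting process, and then to check that the basis so obtained is exactly $\{\Gamma_{k,l} : k,l\in\z,\ l\neq 0\}$. I would begin by recording the multiplication in the target group, namely $(m,n)(m',n')=(m+(-1)^{n}m',n+n')$ in $\zsdz$, and noting that $g$ is surjective since its images $g(u)=(1,0)$ and $g(v)=(0,1)$ generate $\zsdz$. Consequently $\ker{g}$ is a subgroup of the free group $F(u,v)$, hence itself free, with $F(u,v)/\ker{g}\cong\zsdz$; thus it suffices to produce a free basis.

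Next I would exhibit the candidate Schreier transversal $T=\{v^{k}u^{l} : k,l\in\z\}$ for $\ker{g}$ in $F(u,v)$. Using the semidirect-product multiplication one computes $g(v^{k}u^{l})=((-1)^{k}l,k)$, and as $(k,l)$ ranges over $\z^{2}$ this gives a bijection onto $\zsdz$, so $T$ is a genuine transversal containing the identity $v^{0}u^{0}$. Moreover $T$ is prefix-closed as a set of reduced words, since every prefix of $v^{k}u^{l}$ has the form $v^{j}$ or $v^{k}u^{i}$ and so again lies in $T$; hence $T$ is a Schreier transversal. For $w\in F(u,v)$ I write $\overline{w}\in T$ for the representative of its coset.

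I would then run Reidemeister--Schreier: a free basis of $\ker{g}$ is given by the nontrivial elements among $\gamma(t,s)=ts\,\overline{ts}^{\,-1}$ for $t\in T$ and $s\in\{u,v\}$. For $s=u$ we have $tu=v^{k}u^{l+1}\in T$, so $\gamma(t,u)=\id$ for every $t$. For $s=v$ and $t=v^{k}u^{l}$, the computation $g(v^{k}u^{l}v)=((-1)^{k}l,k+1)$ forces $\overline{v^{k}u^{l}v}=v^{k+1}u^{-l}$, whence $\gamma(v^{k}u^{l},v)=v^{k}u^{l}v\,u^{l}v^{-k-1}=\Gamma_{k,l}$. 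Finally I would observe that $\Gamma_{k,0}=v^{k+1}v^{-k-1}=\id$, whereas for $l\neq 0$ the word $\Gamma_{k,l}$ is reduced and nontrivial, so the nontrivial Schreier generators are precisely $\{\Gamma_{k,l} : l\neq 0\}$. These therefore form a free basis of $\ker{g}$, which is exactly the assertion $\ker{g}=\langle\Gamma_{k,l},\, k,l\in\z,\, l\neq 0 \mid -\rangle$.

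The step demanding the most care is the identification $\overline{v^{k}u^{l}v}=v^{k+1}u^{-l}$: it is here that the twisting $(-1)^{k}$ of the semidirect product enters, and tracking the parity of $k$ correctly is what produces the exponent $u^{l}$ (rather than $u^{-l}$) in $\Gamma_{k,l}$. The remaining verifications --- surjectivity of $g$, the prefix-closed (Schreier) property of $T$, the triviality of $\gamma(t,u)$, and of $\Gamma_{k,0}$ --- are routine.
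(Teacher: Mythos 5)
Your proposal is correct and follows essentially the same route as the paper: both apply the Reidemeister--Schreier rewriting process with the Schreier transversal $\{v^{k}u^{l}\}_{k,l\in\z}$, verify via $g(v^{k}u^{l})=((-1)^{k}l,k)$ that it is a complete set of coset representatives, and compute that the generators coming from $s=u$ are trivial while those coming from $s=v$ are exactly the $\Gamma_{k,l}$, trivial precisely when $l=0$. Your additional explicit checks (surjectivity of $g$, prefix-closedness of the transversal, the twisted computation of $\overline{v^{k}u^{l}v}$) are routine points the paper leaves implicit.
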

		
\begin{proof}
We use the Reidemeister-Schreier rewriting  process that is described in detail in \cite[Chapter 2, Theorem 2.8]{Magnus} and briefly in \cite[Appendix I, Theorem 6.3]{Mura}. We use the notation of \cite{Mura}. Let $S=\{ v^k u^l \}_{ k,l \in \z }$. We have $g(v^k u^l)= (0,k)(l,0) =((-1)^k l,k)$. So, $g|_S : S \to \zsdz$ is a bijection, and therefore $S$ is a complete set of right coset representatives of $\ker g$ in $F(u,v)$. Moreover, $S$ is a Schreier system of $\ker g$. Let us compute the generators of $\ker g$. We have
\begin{align*}
v^k u^l u \overline{v^k u^l u}^{-1} &=v^k u^{l+1} \overline{v^k u^{l+1}}^{-1}=v^k u^{l+1}(v^k u^{l+1} )^{-1}=\id, \text{ and}\\
v^k u^l v \overline{v^k u^l v }^{-1} &=v^k u^l v(v^{k+1} u^{-l} )^{-1}=\Gamma_{k,l},
\end{align*}
where for all $w\in F(u,v)$, $\overline{w}$ is the unique element of $S$ for which $g(\overline{w})=g(w)$. Note that $\Gamma_{k,l}=\id$ if and only if $l=0$. Using the Reidemeister-Schreier rewriting process, we see that the group $\ker g$ is freely generated by $\{ \Gamma_{k,l} \}_{k,l\in \z,\, l\neq 0}$.
\end{proof}

The basis of $\ker g$ given in Proposition~\ref{prop:basis_gamma} is not well adapted to our calculations. We define a new basis that is more suitable.
		
		\begin{lemma}\label{lem:change_basis}
For each $k,l \in \z$, let $B_{k,l}=v^k u^l B u^{-l} v^{-k}$, where $B=u v u v^{-1}$. Then we have the following relations in $\ker g$:
\begin{enumerate}[(a)]
\item\label{it:basisi} $\Gamma_{k,l} = \prod_{i=1}^{l} B_{k, l-i} $ if $l \geq 1$, and $\Gamma_{k,l} =\prod_{i=1}^{-l} B_{k, l-1+i}^{-1}$ if $l \leq -1$.

\item\label{it:basisii} $B_{k,l}=\Gamma_{k,l+1} \Gamma_{k,l}^{-1}$, where $\Gamma_{k,0}^{-1}=\id$.
		\end{enumerate}
		\end{lemma}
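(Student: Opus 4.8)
The plan is to establish part~(\ref{it:basisii}) first by a direct computation in $F(u,v)$, and then to deduce part~(\ref{it:basisi}) from it by a telescoping argument. The crucial preliminary observation is that the formula $\Gamma_{k,l}=v^{k}u^{l}vu^{l}v^{-k-1}$ makes sense for every $l\in\z$ and that $\Gamma_{k,0}=v^{k}vv^{-k-1}=\id$; this is what justifies the convention $\Gamma_{k,0}^{-1}=\id$ appearing in the statement.

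For part~(\ref{it:basisii}), I would simply expand both sides. Writing $\Gamma_{k,l}^{-1}=v^{k+1}u^{-l}v^{-1}u^{-l}v^{-k}$ and cancelling the central factor $v^{-k-1}v^{k+1}=\id$ gives $\Gamma_{k,l+1}\Gamma_{k,l}^{-1}=v^{k}u^{l+1}vuv^{-1}u^{-l}v^{-k}$. On the other hand, substituting $B=uvuv^{-1}$ into $B_{k,l}=v^{k}u^{l}Bu^{-l}v^{-k}$ yields precisely $v^{k}u^{l+1}vuv^{-1}u^{-l}v^{-k}$, so the two words coincide and $B_{k,l}=\Gamma_{k,l+1}\Gamma_{k,l}^{-1}$.

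To obtain part~(\ref{it:basisi}), I would feed part~(\ref{it:basisii}) into a telescoping product. For $l\geq 1$, reindexing by $m=l-i$ rewrites $\prod_{i=1}^{l}B_{k,l-i}$ as $B_{k,l-1}B_{k,l-2}\cdots B_{k,0}$; replacing each factor using $B_{k,m}=\Gamma_{k,m+1}\Gamma_{k,m}^{-1}$ collapses the product to $\Gamma_{k,l}\Gamma_{k,0}^{-1}=\Gamma_{k,l}$ via $\Gamma_{k,0}=\id$. For $l\leq -1$, put $n=-l\geq 1$, so that $\prod_{i=1}^{-l}B_{k,l-1+i}^{-1}=B_{k,-n}^{-1}B_{k,-n+1}^{-1}\cdots B_{k,-1}^{-1}$; using the inverted form $B_{k,m}^{-1}=\Gamma_{k,m}\Gamma_{k,m+1}^{-1}$, this product again telescopes, now to $\Gamma_{k,-n}\Gamma_{k,0}^{-1}=\Gamma_{k,l}$.

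I do not expect any genuine obstacle: once the one-line identity of part~(\ref{it:basisii}) is in hand, the rest is bookkeeping. The only points demanding care are fixing the index ranges of the two telescoping products correctly and tracking the inverse in the case $l\leq -1$; the convention $\Gamma_{k,0}=\id$ is exactly what makes each telescope close up cleanly at the trivial end.
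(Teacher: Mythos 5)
Your proof is correct, but it runs in the opposite direction to the paper's. The paper proves part~(\ref{it:basisi}) first, by induction: the base case $\Gamma_{0,1}=uvuv^{-1}=B_{0,0}$, the inductive step via the recurrence $\Gamma_{0,l+1}=u\Gamma_{0,l}u^{-1}\Gamma_{0,1}$, the case $l\leq -1$ via $\Gamma_{0,l}=u^{l}\Gamma_{0,-l}^{-1}u^{-l}$, and general $k$ by conjugating everything by $v^{k}$; part~(\ref{it:basisii}) is then stated to follow from~(\ref{it:basisi}). You instead take the identity $B_{k,l}=\Gamma_{k,l+1}\Gamma_{k,l}^{-1}$ as the primitive fact, verifying it by a one-line expansion in $F(u,v)$ (your cancellation $v^{-k-1}v^{k+1}=\id$ and the comparison with $v^{k}u^{l+1}vuv^{-1}u^{-l}v^{-k}$ both check out), and recover~(\ref{it:basisi}) by telescoping, with the observation that the defining word gives $\Gamma_{k,0}=v^{k}vv^{-k-1}=\id$ closing both telescopes; your index bookkeeping in the case $l\leq -1$ (the product runs $B_{k,-n}^{-1}\cdots B_{k,-1}^{-1}$ with $n=-l$, and $B_{k,m}^{-1}=\Gamma_{k,m}\Gamma_{k,m+1}^{-1}$) is accurate. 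Your route is arguably cleaner: it avoids induction entirely, treats positive and negative $l$ symmetrically, and makes explicit why the convention $\Gamma_{k,0}^{-1}=\id$ is the natural one, whereas the paper's inductive scheme matches the style of the recurrence arguments it uses elsewhere (Lemmas~\ref{lem:word_t_w}--\ref{lem:word_o}) but needs three separate reductions. Both arguments are complete and yield the same statement.
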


\begin{proof}
We first prove part~(\ref{it:basisi}). We start by proving the result in the case $k=0$. If $l=1$, we have $\Gamma_{0,1}=u v u v^{-1}=B_{0,0}$. So suppose that $\Gamma_{0,l}=\displaystyle \prod_{i=1}^{l} B_{0, l-i}$ for some $l \geq 1$, and let us show that the result holds for $l+1$. We have:
\begin{equation*}
\Gamma_{0,l+1}=u \Gamma_{0,l} u^{-1} \Gamma_{0,1} 
= u \left( \prod_{i=1}^{l} B_{0, l-i}\right) u^{-1} B_{0,0}
= \left( \prod_{i=1}^{l} B_{0, l-i+1} \right) B_{0,0} =  \prod_{i=1}^{l+1} B_{0, (l+1)-i} .
\end{equation*}
By induction, the given formula is valid for $k=0$ and all $l\geq 1$. If $l \leq-1$, the result holds for $\Gamma_{0,-l}$, and thus:
\begin{equation*}
\Gamma_{0,l}=u^l \Gamma_{0,-l}^{-1} u^{-l}
= u^l \left(\displaystyle \prod_{i=1}^{-l} B_{0,-l-i}  \right)^{-1} u^{-l}
= \left(\displaystyle \prod_{i=1}^{-l} B_{0,-i}  \right)^{-1}
= \displaystyle \prod_{i=1}^{-l} B_{0,l-1+i}^{-1}.
\end{equation*}
Hence the formula holds for $k=0$ and all $l\in \z \setminus \{ 0\}$. Now let $k\in \z$. Then $\Gamma_{k,l}=v^k \Gamma_{0,l} v^{-k}$ and $B_{k,l}=v^k B_{0,l} v^{-k}$, and we obtain the formula for all $k\in \z$ and $l\in \z \setminus \{ 0\}$ using the results of the case $k=0$. Part~(\ref{it:basisii}) then follows. 
\end{proof}

\begin{theorem}\label{th:basis_B}
The set $\{ B_{k,l}=v^k u^l B u^{-l} v^{-k} \}_{ k,l \in \z}$ is a basis of $\ker g$.		
\end{theorem}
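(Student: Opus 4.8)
The plan is to deduce the theorem directly from Proposition~\ref{prop:basis_gamma} and Lemma~\ref{lem:change_basis} by exhibiting an explicit isomorphism, so that no further combinatorial group theory is needed. By Proposition~\ref{prop:basis_gamma}, $\ker{g}$ is the free group with basis $\{\Gamma_{k,l}\}_{k\in\z,\, l\neq 0}$. Let $F$ denote the free group on a set of abstract symbols $\{b_{k,l}\}_{k,l\in\z}$, and let $\phi\colon F\to \ker{g}$ be the homomorphism determined by $\phi(b_{k,l})=B_{k,l}$. It suffices to show that $\phi$ is an isomorphism, since then $\{B_{k,l}\}_{k,l\in\z}$, being the image under $\phi$ of the basis $\{b_{k,l}\}$ of $F$, is a basis of $\ker{g}$. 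Surjectivity of $\phi$ is immediate: by Lemma~\ref{lem:change_basis}(\ref{it:basisi}) each $\Gamma_{k,l}$ with $l\neq 0$ is a product of elements $B_{k,j}$ and their inverses, hence lies in $\im{\phi}$, and these $\Gamma_{k,l}$ generate $\ker{g}$.

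To prove injectivity I would construct a two-sided inverse $\psi\colon \ker{g}\to F$. Because $\{\Gamma_{k,l}\}_{l\neq 0}$ is a free basis of $\ker{g}$, a homomorphism out of $\ker{g}$ may be prescribed arbitrarily on these generators; I set $\psi(\Gamma_{k,l})=b_{k,l-1}b_{k,l-2}\cdots b_{k,0}$ for $l\geq 1$ and $\psi(\Gamma_{k,l})=b_{k,l}^{-1}b_{k,l+1}^{-1}\cdots b_{k,-1}^{-1}$ for $l\leq -1$, that is, the words obtained from the right-hand sides of Lemma~\ref{lem:change_basis}(\ref{it:basisi}) upon replacing each $B_{k,j}$ by $b_{k,j}$. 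With this definition the identity $\phi\circ\psi=\mathrm{id}_{\ker{g}}$ holds on every basis element $\Gamma_{k,l}$, again by Lemma~\ref{lem:change_basis}(\ref{it:basisi}), and hence on all of $\ker{g}$.

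The remaining verification is $\psi\circ\phi=\mathrm{id}_F$, for which it is enough to check that $\psi(B_{k,l})=b_{k,l}$ for all $k,l\in\z$. Here I would invoke the relation $B_{k,l}=\Gamma_{k,l+1}\Gamma_{k,l}^{-1}$ of Lemma~\ref{lem:change_basis}(\ref{it:basisii}), together with the convention $\Gamma_{k,0}=\id$, so that $\psi(B_{k,l})=\psi(\Gamma_{k,l+1})\,\psi(\Gamma_{k,l})^{-1}$; substituting the defining words for $\psi(\Gamma_{k,l+1})$ and $\psi(\Gamma_{k,l})$ yields a telescoping product that collapses to $b_{k,l}$. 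This should be done in the three cases $l\geq 1$, $l=0$ and $l\leq -1$ (the cases $l=0,-1$ using $\Gamma_{k,0}=\id$), each reducing to the cancellation of a common suffix or prefix of the two words. Once both composites are shown to be the identity, $\phi$ is an isomorphism and the theorem follows.

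I expect the only genuine obstacle to be the index bookkeeping in this telescoping computation: keeping track of the shift $l\mapsto l+1$, of the sign reversal between the $l>0$ and $l<0$ formulas, and of the boundary values near $l=0$, rather than any conceptual difficulty. Conceptually the statement merely records that passing from $\{\Gamma_{k,l}\}$ to $\{B_{k,l}\}$ is an invertible, ``triangular'' change of free generating set carried out independently within each fibre indexed by $k$, and the inverse homomorphism $\psi$ makes this precise.
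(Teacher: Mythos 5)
Your proof is correct, but it takes a genuinely different route from the paper's. The paper also starts from Proposition~\ref{prop:basis_gamma} and Lemma~\ref{lem:change_basis}, and uses the lemma to see that the $B_{k,l}$ generate $\ker{g}$; but for freeness it argues by contradiction: given a putative nontrivial reduced relation $w=\id$ among the $B_{k,l}$, it builds, for each first index $k$ occurring in $w$, finite sets $B_k$ and $\Gamma_k$ of equal cardinality generating the same subgroup $C$, notes that $C_\Gamma$ is a basis of $C$ (being a finite subset of the free basis $\{\Gamma_{k,l}\}$), and concludes that $C_B$, a generating set of the same finite cardinality, is also a basis of $C$ --- a step that rests on the standard but nontrivial fact that a free group of finite rank $n$ cannot be generated by fewer than $n$ elements and that any generating set of size $n$ is a basis (Hopficity) --- whence $w$ yields a contradiction. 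You replace this rank argument with an explicit two-sided inverse: $\psi$ is legitimately prescribed on the free basis $\{\Gamma_{k,l}\}_{l\neq 0}$ supplied by Proposition~\ref{prop:basis_gamma}, the identity $\phi\circ\psi=\mathrm{id}$ on generators is exactly Lemma~\ref{lem:change_basis}(\ref{it:basisi}), and the telescoping you defer does collapse correctly in every case: for $l\geq 1$ one has $\psi(\Gamma_{k,l+1})\psi(\Gamma_{k,l})^{-1}=b_{k,l}b_{k,l-1}\cdots b_{k,0}\,(b_{k,l-1}\cdots b_{k,0})^{-1}=b_{k,l}$; for $l\leq -2$, $\psi(\Gamma_{k,l+1})\psi(\Gamma_{k,l})^{-1}= b_{k,l+1}^{-1}\cdots b_{k,-1}^{-1}\cdot b_{k,-1}\cdots b_{k,l+1}b_{k,l}=b_{k,l}$; and the boundary cases give $\psi(B_{k,0})=\psi(\Gamma_{k,1})=b_{k,0}$ and $\psi(B_{k,-1})=\psi(\Gamma_{k,-1})^{-1}=b_{k,-1}$ via the convention $\Gamma_{k,0}=\id$ of Lemma~\ref{lem:change_basis}(\ref{it:basisii}). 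So there is no gap. What your approach buys is that it is constructive and elementary --- only the universal property of free groups plus the two appendix results, with the ``triangular'' change of generating set made into an explicit isomorphism, and no appeal to rank or Hopficity; what the paper's approach buys is brevity, sidestepping your case-by-case index bookkeeping at the cost of invoking that standard fact about finite-rank free groups.
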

		
\begin{proof}
By Lemma~\ref{lem:change_basis}, the elements of the set $\{ B_{k,l} \}_{k,l\in \z}$ generate $\ker g$. To show that this set is a basis, it suffices to prove that there are only trivial relations between these elements. Suppose on the contrary that there exists a word $w\in \ker g$ for which:
\begin{equation*}
\text{$w=B_{k_1,l_1}^{\varepsilon1} B_{k_2,l_2}^{\varepsilon_2} \cdots B_{k_n,l_n}^{\varepsilon_n}=\id$,
where $\varepsilon_i \in \{-1,1 \} \text{ and }B_{k_i,r_i}^{\varepsilon_i}B_{k_{i+1},r_{i+1}}^{\varepsilon_{i+1}} \neq \id$.}
\end{equation*}
Let $S=\{k_1, k_2, \ldots, k_n \}$. For each $k \in S$, we  define the set $R_k$ consisting of those indices $l_j$ for which the element $B_{k, l_j}$ 
appears in the word $w$. Let $l_m$ and $l_M$ be the minimal and maximal elements of $R_k$ respectively. We define the sets $B_k$ and $\Gamma_k$ as follows:
\begin{itemize}
\item $B_k=\{ B_{k,0}, \ldots, B_{k,l_m}, \ldots, B_{k,l_M} \}$ and $\Gamma_k=\{ \Gamma_{k,1}. \ldots, \Gamma_{k, l_M}, \Gamma_{k, l_M+1} \}$ if $0 \leq l_m$.
\item $B_k=\{ B_{k, l_m}, \ldots, B_{k,-1}, B_{k,0}, B_{k,1}, \ldots, B_{k,l_M} \}$ and $\Gamma_k=\{ \Gamma_{k, l_m}, \ldots, \Gamma_{k,-1}, \Gamma_{k,1}, \ldots, \Gamma_{k, l_M +1} \}$ if $l_m < 0 < l_M+1$.
\item $B_k=\{ B_{k, l_m}, \ldots,B_{k,l_M} \}$ and $\Gamma_k=\{ \Gamma_{k, l_m}, \ldots, \Gamma_{k, l_M}\}$ if $l_M+1=0$.
\item $B_k=\{ B_{k, l_m}, \ldots,B_{k,l_M}, B_{k,l_M +1}\}$ and $\Gamma_k=\{ \Gamma_{k, l_m}, \ldots, \Gamma_{k, l_M}, \Gamma_{k,l_M +1}\}$ if $l_M+1 < 0$.
\end{itemize}
Note that $B_k$ and $\Gamma_k$ have the same number of elements by Lemma~\ref{lem:change_basis}. Further, if $k, k' \in S$, where $k\neq k'$, then $B_k \cap B_{k'}=\emptyset=\Gamma_k \cap \Gamma_{k'}$. It follows that $C_B=\displaystyle \bigcup_{k \in S} B_s$ and $C_\Gamma=\displaystyle \bigcup_{k \in S} \Gamma_k$ have the same number of elements, and generate the same subgroup $C$ of $\ker g$, using Lemma~\ref{lem:change_basis} once more. Since $C_\Gamma$ is a finite basis of $C$, $C_B$ is a basis of $C$, and so $w \in C$, which yields a contradiction because $C$ is a free group.
\end{proof}

\section*{Acknowledgements}

This work is a continuation of part of the Ph.D. thesis~\cite{Laass} of the third author who was supported by CNPq project nº~140836 and Capes/COFECUB project n\textsuperscript{o}~12693/13-8. The first and second authors wish to thank the `R\'eseau Franco-Br\'esilien en Math\'ematiques' for financial support during their respective visits to the Laboratoire de Math\'ematiques Nicolas Oresme UMR CNRS 6139, Universit\'e de Caen Normandie, from the 9\textsuperscript{th} to the 24\textsuperscript{th} of November 2019, and to the Instituto de Matem\'atica e Estat\'istica, Universidade de S\~ao Paulo, from the 17\textsuperscript{th} to the 31\textsuperscript{th} of August 2019.


\end{document}